\newcommand{\Mod}[1]{\ (\text{mod}\ #1)}
\definecolor{darkblue}{rgb}{0,0,0.8}
\DeclareSymbolFont{AMSb}{U}{msb}{m}{n}  
\DeclareMathSymbol{\Sph}{\mathbin}{AMSb}{"53} \DeclareMathSymbol{\R}{\mathbin}{AMSb}{"52}
\DeclareMathSymbol{\T}{\mathbin}{AMSb}{"54} \DeclareMathSymbol{\Z}{\mathbin}{AMSb}{"5A}
\DeclareMathSymbol{\K}{\mathbin}{AMSb}{"4B}
\newtheorem{example}{Example}[section]
\newtheorem{guess}[example]{Theorem}
\newtheorem{defin}[example]{Definition}
\newtheorem{coro}[example]{Corollary}
\newtheorem{llemma}[example]{Lemma}
\newcommand{\Aut}{\mathrm{Aut}}
\newcommand{\Hol}{\mathrm{Hol}}
\newcommand{\Cay}{\mathrm{Cay}}
\newcounter{case}
\begin{document}  

\title{The NNN-Property of Cyclic Groups}

\author{Michael Giudici$^{1}$\footnote{Email: michael.giudici@uwa.edu.au}, 
Luke Morgan$^{2}$\footnote{Email: luke.morgan@famnit.upr.si}, 
Yian Xu$^{3}$\footnote{Email: yian$\_$xu@seu.edu.cn}\\ 
 $^{1}$ Centre for the Mathematics of Symmetry and Computation \\
 Department of Mathematics and Statistics  \\
  The University of Western Australia \\
  35 Stirling Highway, Crawley, WA, 6009, Australia \\
  $^{2}$ University of Primorska, UP FAMNIT, Glagolja\v{s}ka 8, 6000 Koper, Slovenia\\
  University of Primorska, UP IAM, Muzejski trg 2, 6000 Koper, Slovenia \\
  $^{3}$ School of Mathematics  \\
  Southeast University\\
  2 SEU Road, Nanjing, 211189, China
}

\date{ }
\maketitle

\begin{abstract}
A Cayley graph is said to be an NNN-graph if it is both normal and non-normal for isomorphic regular groups, and a group has the NNN-property if there exists an NNN-graph for it. In this paper we investigate the NNN-property of cyclic groups, and show that cyclic groups do not have the NNN-property.  
\end{abstract}

\textit{Keywords}: normal Cayley graph; NNN-graphs; NNN-property; cyclic groups

\section{Introduction}

Let $G$ be a finite group, and $S$ be a subset of $G$ such that $S$ does not contain the identity of $G$ and $S=S^{-1}=\{s^{-1}|s\in S\}$. The $\it{Cayley\ graph}$ $\Gamma=\Cay(G, S)$ is defined to have vertex set $V(\Gamma)=G$, and edge set $E(\Gamma)=\{\{g, sg\}|s\in S\}$. We say that $G$ is the {\it defining group} of $\Gamma$ and $S$ is the {\it connection set} of $\Gamma$. Let $\Aut(\Gamma)$ denote the automorphism group of $\Gamma$. For each $g\in G$, define a map $g_{R}: G\to G$ by the right multiplication of $g$ on $G$ as below: 
$$
g_{R}: x\to xg, \ \text{for}\ x\in G.
$$
Then $g_{R}$ is an automorphism of $\Gamma$. It follows from the definition that the group $G_{R}=\{g_{R} \mid g\in G\}$ is a subgroup of $\Aut(\Gamma)$ and acts regularly on $V(\Gamma)$. It is well known that a graph is a Cayley graph if and only if its automorphism group contains a subgroup that acts regularly on the vertex set of the graph (see \cite{sabidussi1958class}, \cite[Theorem 16.3]{MR1271140}). 

We say that $\Gamma$ is a $\it{normal\ Cayley\ graph\ for\ G}$  (Xu, \cite{Xu1998}) (or normal) if $G_{R}\unlhd \Aut(\Gamma)$, otherwise we say that $\Gamma$ is a $\it{non}$-$\it{normal\ Cayley\ graph\ for\ G}$ (or non-normal). If $\Gamma$ is normal for $G$, then $\Aut(\Gamma)=G_{R}\rtimes \Aut(G, S)$. Xu \cite{Xu1998} showed that, except for $Z_{4}\times Z_{2}$ and $Q_{8}\times Z_{2}^{m}$ with $m\geq 0$, each finite group has at least one normal Cayley graph. (We use $Z_n$ to denote a cyclic group of order $n$.) Years later, Feng and Dobson (see \cite{bamberg2011point}) proposed a question asking if it is possible for a Cayley graph to be both normal and non-normal for two isomorphic regular groups, that is, can $\Aut(\Gamma)$ contain a normal regular subgroup $G$ and a non-normal regular subgroup isomorphic to $G$.

\begin{defin}\label{defin11}
A Cayley graph $\Gamma=\mathrm{Cay}(G, S)$ is an {\emph{NNN-graph for $G$}} (or \emph{NNN for $G$}) if $\Gamma$ is normal for $G$ and is non-normal for $H$ where $H\cong G$ and $H\neq G$. 
\end{defin}

\begin{defin}\label{defin12}
A group $G$ has the \emph{NNN-property} (or $G$ is an \emph{NNN-group}) if there exists an NNN-graph for $G$.
\end{defin}

Few NNN-graphs are known. Giudici and Smith \cite{giudici2010note} constructed a strongly regular Cayley graph for $Z_{6}^{2}$ and showed that such a graph is an NNN-graph for $Z_{6}^{2}$. Royle \cite{royle2008normal} proved that the halved folded $8$-cube is an NNN-graph for $Z_{2}^{6}$. The first infinite family of NNN-graphs was found by Bamberg and Giudici \cite{bamberg2011point} when they studied the point graphs of a particular family of generalised quadrangles. It was shown in \cite{MR3698085} that  
the Cartesian product, direct product or strong product  of an NNN-graph  with finitely many normal Cayley graphs  is an NNN-graph.
Taking such products of the
 halved folded 8-cube with
  finitely many $K_{2}$, it is proved in \cite{yxuthesis2019} that the elementary abelian 2-group $Z_{2}^{d}$ has NNN-graphs if $d\geqslant 6$.

A graph is a {\it CI-graph} for some finite group $G$ if all regular subgroups of automorphisms isomorphic to $G$ are conjugate, and so any CI-graph is not NNN. A group $G$ is said to be a {\it CI-group} if all Cayley graphs for $G$ are CI-graphs. Clearly, 
a CI-group does not have the NNN-property, and so the study of CI-groups can help us investigate the conditions on 
the existence of NNN-groups. For example, $Z_{p}^{d}$ is a CI-group for an arbitrary prime $p$ if $d\leqslant5$ (see \cite{yxuthesis2019}). Since there exist NNN-graphs for $Z_{2}^{d}$ when $d\geqslant 6$, we have that $Z_{2}^{d}$ does not have the NNN-property if and only if $d\leqslant 5$. It is shown in \cite{Muzychuk1997} that if $n\in\{8,9,18\}$ or $n=k,2k$ or $4k$ where $k$
is an odd square-free integer,
  then the cyclic group $Z_{n}$ is a CI-group, and so it does not have the NNN-property. This motivates us to determine if arbitrary cyclic groups have the NNN-property. 

A \emph{circulant}  is a Cayley graph of a cyclic group. The essence of testing whether a circulant $\Gamma$ of a group $G$ is NNN for $G$ is to study the cyclic regular subgroups contained in $\mathrm{Aut}(\Gamma)$. Even though the investigation of the automorphism groups and the regular subgroups of circulants has a long history (see \cite{Alspach1979, Fernandes2005}), most related work considered non-isomorphic regular subgroups. Joseph \cite{joseph1995isomorphism} showed that if a graph is a Cayley graph of a cyclic group and a non-cyclic group of order $p^{2}$ for $p$ a prime, then the graph is a lexicographic product of two Cayley digraphs of prime order. Also, Maru\v{s}i\v{c} and Morris \cite{maruvsivc2005normal} gave a sufficient but not necessary condition for a graph 
to be
 a Cayley graph for a cyclic group and a non-cyclic group. 

In \cite{morris1999isomorphic}, Morris extended Joseph's result to Cayley graphs of abelian groups of odd prime-power order, and proved that a digraph is a Cayley digraph of a cyclic group and a non-isomorphic abelian group if and only if it is a wreath product of Cayley graphs of $p$-groups.  Kov\'{a}cs and Servatius \cite{kovacs2012cayley} extended Morris's result
to the case when $p=2$
   and gave a necessary and sufficient condition for two Cayley digraphs 
to be  
    isomorphic. One of the ideas 
    used
     in their arguments is 
      the notion of {\it W-subgroups of abelian groups}: a subgroup $H\leqslant G$ is a \emph{W-subgroup relative to $S$} where $S\subseteq G\backslash \{\bf 1\}$, if $S\setminus H$ is a union of $H$-cosets, denoted by $H\leqslant_{S} G$. In particular, if $H\leqslant_{S}G$ and $H\neq G$, then we write $H<_{S}G$. Kov\'{a}cs and Servatius showed that if $G$ has a W-subgroup $H<_{S}G$, then the graph $\Gamma=\Cay(G, S)$ is a lexicographic product of two nontrivial circulants (see \cite[Theorem 1.2]{kovacs2012cayley}), and it follows from \cite[Lemma 5.3.4]{yxuthesis2019} that $\Gamma$ is not NNN for $G$. Hence, the study of W-subgroups of $Z_{2^{n}}$ can help us determine some cases where  NNN-graphs do not exist. However, Kov\'{a}cs and Servatius proved that if $Z_{2^{n}}$ has no $H<_{S}Z_{2^{n}}$, then every abelian regular subgroup of $\Aut(\Cay(Z_{2^{n}}, S))$ is cyclic (see \cite[Lemma 4.5]{kovacs2012cayley}), which implies that considering the W-subgroups of $Z_{2^{n}}$ is not enough 
to determine the NNN-property of cyclic groups.     
 Recently, by studying arc-transitive circulants, Li, Xia and Zhou \cite{li2019explicit} showed that an arc-transitive circulant is normal if and only if its automorphism group contains a unique regular cyclic subgroup, which implies that arc-transitive normal circulants are not NNN-graphs. 

The aim of this paper is to prove the following result.

\begin{guess}\label{main0}
Cyclic groups do not have the NNN-property.
\end{guess}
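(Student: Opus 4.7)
My approach is to argue by contradiction. Suppose $\Gamma=\Cay(Z_n,S)$ is NNN for $G:=G_R$, so $A:=\Aut(\Gamma)$ contains $G$ as a normal regular subgroup and also a distinct regular cyclic subgroup $H$. Since $G\unlhd A$, one has $A= G\rtimes \Aut(Z_n,S) \leq \Hol(Z_n)=Z_n\rtimes\Aut(Z_n)$, so writing elements in the form $\tau^a\sigma_r$ (with $\tau$ a generator of $G$ and $\sigma_r\colon x\mapsto rx$), the cyclic subgroup $H$ must take the shape $H=\langle \tau^c\sigma_r\rangle$ with $r\neq 1$. Letting $d$ be the order of $r$ in $\Aut(Z_n)$, the regularity of $H$ together with $|H|=n$ forces $d\mid n$ and makes $K:=H\cap G$ the unique subgroup of $G$ of order $n/d$. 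Moreover, $H\leq A$ forces $\sigma_r\in\Aut(Z_n,S)$, and $S=-S$ forces $\sigma_{-1}\in\Aut(Z_n,S)$.

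Two standard reductions handle most cases at once. First, by Muzychuk \cite{Muzychuk1997}, I may assume $n$ is not a CI-value, for otherwise the two regular cyclic subgroups $G,H$ of $A$ are conjugate in $A$ and the normality of $G$ gives $H=G$. Second, if $Z_n$ admits a proper W-subgroup $L<_S Z_n$, then \cite[Theorem 1.2]{kovacs2012cayley} expresses $\Gamma$ as a lexicographic product of two smaller circulants, and \cite[Lemma 5.3.4]{yxuthesis2019} then says $\Gamma$ is not NNN for $G$. So we may assume throughout that $n$ is non-CI and that $Z_n$ has no proper W-subgroup relative to $S$.

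In this residual case the plan is to force arc-transitivity of $\Gamma$ and invoke Li--Xia--Zhou \cite{li2019explicit}. Concretely, I would argue that the absence of a W-subgroup, together with the inclusions $\sigma_r,\sigma_{-1}\in\Aut(Z_n,S)$, forces $\Aut(Z_n,S)=\Aut(Z_n)$, hence $A=\Hol(Z_n)$ and $\Gamma$ is arc-transitive. If $\Aut(Z_n,S)$ were a proper subgroup of $\Aut(Z_n)$, then the finer $\Aut(Z_n,S)$-orbit decomposition of $S$ interacting with the canonical $K$-coset partition of $Z_n$ (whose index $d$ is prescribed by $H$) should produce some subgroup $L'\leq Z_n$ which is a W-subgroup relative to $S$, contradicting the second reduction. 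Once arc-transitivity and normality are in hand, \cite{li2019explicit} gives that $A$ has a unique regular cyclic subgroup, so $H=G$, the desired contradiction.

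The hard part is precisely the forcing of arc-transitivity in the residual case: one must control the interaction between the $\sigma_r$-orbits on $Z_n$, the $K$-coset partition, and the set $S$, and show it cannot fit together without either a W-subgroup or $\Aut(Z_n,S)=\Aut(Z_n)$. I expect this step to split along the prime-power decomposition $n=\prod p_i^{e_i}$: the $2$-part is amenable to Kov\'{a}cs--Servatius \cite[Lemma 4.5]{kovacs2012cayley}, while the odd prime-power parts are handled by direct orbit arguments exploiting that $\Aut(Z_{p^e})$ is cyclic. Assembling these local analyses via the Chinese Remainder Theorem to cover every non-CI $n$ uniformly is the most likely source of technical complications.
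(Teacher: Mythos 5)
Your setup and your two reductions are sound: restricting to $A\leqslant\Hol(Z_n)$, discarding CI values of $n$ via Muzychuk, and discarding the case of a proper W-subgroup via Kov\'acs--Servatius are all legitimate (the second reduction is essentially \cite[Theorem 1.2]{kovacs2012cayley} combined with Lemma~\ref{lexicononnormal}). The genuine gap is the residual case, which is where all of the content of the theorem lives. The dichotomy you propose --- either $Z_n$ has a proper W-subgroup relative to $S$, or $\Aut(Z_n,S)=\Aut(Z_n)$ --- is not proved and is not true; you yourself flag it as ``the hard part'' and offer only a hope that the orbit/coset interaction ``should produce'' a W-subgroup. What can actually be extracted from the existence of a second regular cyclic subgroup is far weaker: for $8\mid n$ the paper shows only that the single element $y^{2^{k_1-4}}$ lies in $\Aut(G,S)$ (Corollary~\ref{ch6coro51}, Lemma~\ref{lem3}), and even that requires the full classification of the regular subgroups of $\Hol(Z_{2^{n}})$ (Theorem~\ref{ch6maintheo}) together with the determination of which cyclic ones are normal in the holomorph (Theorem~\ref{ch6them51}). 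Moreover, your endgame is internally inconsistent: $\Aut(Z_n,S)=\Aut(Z_n)$ does not give arc-transitivity unless $S$ is a single $\Aut(Z_n)$-orbit (take $S$ a union of elements of two different orders), so the appeal to Li--Xia--Zhou need not apply; and in the $2$-power case $y\in\Aut(\Gamma)$ already forces $\Gamma$ to be non-normal (Lemma~\ref{ch6lem51}), so the condition you are trying to force contradicts the standing normality hypothesis rather than feeding into \cite{li2019explicit}.

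The paper's route through the two cases is quite different from what you sketch. For $8\nmid n$ it works with centralisers of $N=H\cap G_R$ in $\Aut(G)$ and an explicit non-group-automorphism of the graph built at odd primes (Lemmas~\ref{ch5lem2}--\ref{ch5them31}, Theorem~\ref{notdivisibleby8}); no CI or W-subgroup input is needed there. For $8\mid n$, knowing only $y^{2^{k_1-4}}\in\Aut(G,S)$, it computes the orbits of $\langle y^{2^{k_1-4}},xf\rangle$ on the layers $S_i$ of the connection set and exhibits an explicit permutation $\theta$ (acting as $y^{2^{k_1-4}}$ on one coset of $\langle a_1^4\rangle\times B$ and trivially elsewhere) that is a graph automorphism fixing ${\bf 1}$ and a generator but is not a group automorphism, contradicting normality (Theorem~\ref{them1}). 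None of this machinery is recoverable from your outline, so as it stands the proposal is a strategy statement with its central step unproved, not a proof.
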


The proof of Theorem~\ref{main0} is structured as 
follows. 
We first study the Sylow $p$-subgroups contained in the automorphism group of a normal circulant, and show that the automorphism group of normal circulants whose order is not divisible by 8 contains a unique abelian regular subgroup, and so cyclic groups with order not divisible by 8 do not have the NNN-property.

For cyclic groups with order divisible by 8, we
determine 
  the NNN-property by analysing the regular subgroups of their holomorphs. The {\it holomorph} of a group $G$ is $\mathrm{Hol}(G)=G_{R}\rtimes \mathrm{Aut}(G)$. We view $\Hol(G)$ as a permutation group on $G$. If $\Gamma$ is normal for $G$, then $\Aut(\Gamma)\leqslant \Hol(G)$, and so each regular subgroup of $\Aut(\Gamma)$ is a regular subgroup of $\Hol(G)$.  Hence if the only regular subgroups of $\Hol(G)$ that are isomorphic to $G$ are normal in $\Hol(G)$ then $G$ does not have the NNN-property. For example, by \cite[Theorem 7.7]{MR3807043} and \cite[Theorem 4]{MR1704676}, we have that the the only regular subgroups of the  holomorph of a nonabelian simple group $T$ that are isomorphic to $T$ are normal in $\Hol(T)$,  which implies that nonabelian simple groups do not have the NNN-property. The investigation of regular subgroups of the holomorph of a group is closely connected with Hopf-Galois structures as well as set-theoretic solutions of the Yang-Baxter equation (see \cite{MR3647970}). General descriptions of regular subgroups of the holomorph of finite groups are given in \cite{MR3807043, tsang2019multiple}, and regular subgroups of the  holomorph of small groups are calculated in \cite{MR3647970}. 

It is shown \cite{tsang2019multiple} that for an abelian group $G$ of order not divisible by 8, $\Hol(G)$ has a unique regular subgroup isomorphic to $G$. Hence such groups do not have the NNN-property (we have already observed this when $G$ is cyclic).  Moreover, Kohl \cite{MR1644203} showed that if $p$ is odd then any regular subgroup of $\Hol(Z_{p^{n}})$ is cyclic. However, things are very different for $p=2$. In Section 3,  we classify the regular subgroups of $\Hol(Z_{2^{n}})$, which is crucial for solving the NNN-property of cyclic groups whose order is divisible by 8.  Before stating the result we need to introduce some notation. Note that $\Aut(G)=\langle x\rangle \times \langle y\rangle$ where $x: a\to a^{-1}$ and $y: a\to a^{5}$. Up to isomorphism, there are three nonabelian groups of order $2^{n}$ with a cyclic subgroup $\langle \sigma\rangle$ of index two and an involution $\tau\notin\langle\sigma\rangle$ (see \cite[Chapter 5, Exercise 17]{MR2286236}), and they are as follows:
\begin{enumerate}
\item the dihedral group $D_{2^n}$, where $\sigma^{\tau}=\sigma^{-1}$;
\item the quasidihedral group $QD_{2^n}$, where $\sigma^{\tau}=\sigma^{2^{n-1}-1}$;
\item $M_{n}(2)$, where $\sigma^{\tau}=\sigma^{2^{n-1}+1}$. 
\end{enumerate}
We also encounter the 
generalised quaternion group 
 $Q_{2^n}=\langle  \sigma, \tau\mid \sigma^{2^{n-1}}=\tau^4=1, \sigma^{2^{n-2}}=\tau^2, \sigma^\tau=\sigma^{-1}   \rangle$ of order $2^n$.

\begin{guess}\label{ch6maintheo}
Let $G=\langle a\rangle$ be a cyclic group that is isomorphic to $Z_{2^{n}}$ with $n\geqslant 3$, and $H=\Hol(G)=G_R\rtimes\Aut(G)$ be the holomorph of $G$. Suppose that $R$ is a subgroup of $H$. Then $R$ is regular if and only if, up to conjugacy, exactly one of the following holds:
\begin{enumerate}
\item $R=G_R$;
\item $R=\langle ay^{2^{t}}\rangle\cong Z_{2^{n}}$ for $0\leqslant  t\leqslant  n-3$ and $R\cap G_R=\langle a^{2^{n-t-2}}\rangle$.
\item $R=\langle a^{2}, ax\rangle\cong D_{2^{n}}$;
\item $R=\langle a^{2}, axy^{2^{n-3}}\rangle\cong Q_{2^{n}}$;
\item $R=\langle a^{2\cdot 5^{-1}}y\rangle\times \langle ax\rangle$ and $R\cap G_R=\langle a^{2^{n-1}}\rangle$;
\item $R=\langle a^{2\cdot 5^{-1}+2^{n-2}}y\rangle \rtimes \langle ax\rangle$ with $R\cap G_R=\langle a^{2^{n-1}}\rangle$, and $R\cong QD_{2^{n}}$;
\item $R=\langle a^{2}y^{2^{n-3}}\rangle\rtimes \langle ax\rangle$ with $R\cap G_R=\langle a^{4}\rangle$, and $R\cong M_{n}(2)$.
\end{enumerate} 
\end{guess}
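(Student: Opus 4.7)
The plan is to classify regular subgroups of $H=\Hol(G)$ of order $2^n$ by stratifying them according to their image in the quotient $H/G_R\cong \Aut(G)=\langle x\rangle\times\langle y\rangle$. Since $\Aut(G)$ is the stabilizer of $1\in G$, a subgroup $R\leq H$ of order $2^n$ is regular if and only if $R\cap\Aut(G)=1$. Writing $\pi:H\twoheadrightarrow\Aut(G)$ for the natural projection, set $\bar R=\pi(R)$; then $R\cap G_R=\ker(\pi|_R)$ has order $2^n/|\bar R|$, which (since $G_R$ is cyclic) forces $R\cap G_R=\langle a^{|\bar R|}\rangle$. Because $\Aut(G)$ is abelian, $\bar R$ is an invariant of the $H$-conjugacy class of $R$, and so it organizes the entire classification.

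For the ``if'' direction, I would verify that each of the seven listed subgroups has order $2^n$, satisfies $R\cap\Aut(G)=1$, and is of the stated isomorphism type. The central computational tool is the $2$-adic valuation formula $v_2(5^{2^s}-1)=s+2$ (together with $v_2(5^{2^s}+1)=1$), which, combined with the geometric-series expression for $(a^r,\phi)^{2^k}$ in $H$, yields closed forms for iterated powers and hence the order of any element, as well as the defining relations satisfied by each candidate.

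For the ``only if'' direction, I would enumerate the possible $\bar R\leq\Aut(G)$ and for each parameterize its regular lifts. A generator of $R$ projecting to $\phi_i\in\bar R$ has the form $(a^{r_i},\phi_i)$, and $R$ is generated by these together with $\langle a^{|\bar R|}\rangle$. The constraint $R\cap\Aut(G)=1$ translates into congruence conditions on the $r_i$, while conjugation by $a^s\in G_R$ shifts $r_i$ by $s(1-c_i)$, where $\phi_i(a)=a^{c_i}$, and conjugation by elements of $\Aut(G)$ multiplies each $r_i$ by a unit of $(\Z/2^n\Z)^{\times}$. Working through the possibilities produces the seven families: $\bar R=1$ yields Case~1; $\bar R=\langle y^{2^t}\rangle$ with $0\leq t\leq n-3$ forces $R$ cyclic (abelian non-cyclic lifts always pick up a nontrivial power of $y$ in $\Aut(G)$), giving Case~2; $\bar R=\langle x\rangle$ yields the dihedral group (Case~3); among cyclic $\bar R=\langle xy^{2^s}\rangle$, regularity forces $s=n-3$, giving $Q_{2^n}$ (Case~4); and non-cyclic $\bar R=\langle x\rangle\times\langle y^{2^t}\rangle$ admits regular lifts only when $t\in\{0,n-3\}$, yielding Cases~5 and~6 (distinguished by the isomorphism types $Z_{2^{n-1}}\times Z_{2}$ and $QD_{2^n}$) when $t=0$, and Case~7 ($M_n(2)$) when $t=n-3$.

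The main obstacle is the non-existence claims for intermediate $\bar R$: cyclic $\langle xy^{2^s}\rangle$ with $s<n-3$, and non-cyclic $\langle x\rangle\times\langle y^{2^t}\rangle$ with $0<t<n-3$. For each such $\bar R$, I must show that any attempted lift $R$ automatically contains a nontrivial element of $\Aut(G)$. The key is a uniform valuation argument: an appropriate power of a lift of a generator of $\bar R$, multiplied by a suitable element of $\langle a^{|\bar R|}\rangle$, produces an element whose $G_R$-component vanishes modulo $2^n$ while its $\Aut(G)$-component is a nontrivial power of $y$; the required multiplier is dictated by matching $2$-adic valuations via the formulas above. Conjugacy distinctness of the seven final cases then follows, since the pair $(\bar R,\,R\cap G_R)$ separates all cases except Cases~5 and~6, which are themselves distinguished by their non-isomorphic structures.
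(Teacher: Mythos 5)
Your proposal is correct and follows the same overall architecture as the paper: both arguments stratify regular subgroups by their image $\bar R$ in $H/G_R\cong\Aut(G)=\langle x\rangle\times\langle y\rangle$, normalize lifted generators by conjugation (by $G_R$ to shift the translation parts, by $\Aut(G)$ to scale them by units of $\Z/2^n\Z$), and decide regularity via the $2$-adic valuations of $(1-5^{-k\gamma})/(1\mp 5^{-\gamma})$ coming from $v_2(5^{2^s}-1)=s+2$. The one genuine organizational difference is that you bypass the paper's Subsection~3.2: rather than classifying the semiregular \emph{elements} of $\Hol(G)$ (which the paper does by computing $h^{|h|/2}$ and examining stabilizers $H_g$ of arbitrary points) and then deriving five candidate ``types'', you work with the subgroup-level criterion that a subgroup of order $2^n$ is regular if and only if $R\cap\Aut(G)=\mathbf{1}$, which only ever requires inspecting the stabilizer of the identity. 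This is a genuine streamlining of the necessary-condition step, at the cost that for each intermediate $\bar R$ (cyclic $\langle xy^{2^s}\rangle$ with $s<n-3$, non-cyclic $\langle x\rangle\times\langle y^{2^t}\rangle$ with $0<t<n-3$, and lifts $a^{2^s}y^\gamma$ with $s\geqslant1$) you must exhibit a product of powers of the generators lying in $\Aut(G)\setminus\{\mathbf{1}\}$ --- which is exactly the valuation computation the paper performs anyway, so nothing essential is lost. One caveat: if you carry the computation through for $\bar R=\langle x\rangle\times\langle y^{2^t}\rangle$, you will find that the group in case~7 satisfies $\sigma^\tau=\sigma^{2^{n-2}-1}$ with $|\sigma|=2^{n-1}$ (so it is quasidihedral) while the nonabelian group in case~6 has commutator subgroup of order $2$ (so it is $M_n(2)$); the isomorphism labels in cases~6 and~7 of the statement are swapped relative to the paper's own Theorem~\ref{ch6them441}, so do not distort your computation to match them.
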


In cases 5~and 6, we write $5^{-1}$ for the inverse of $5$ in the multiplicative group of $Z_{2^n}$.

\section{Cyclic Groups $G$ with $8\nmid |G|$}

We start with a result about centralisers of subgroups.

\begin{llemma}\label{ch5lem1}
Suppose that  $G=Z_{p^{k}}$ where $p$ is a prime and $k\geqslant 2$. Let $N=Z_{p^{m}}$ be a nontrivial proper subgroup of $G$, and $H=C_{\mathrm{Aut}(G)}(N)$. Then $|H|=p^{k-m}$. Moreover, $H\cong Z_{p^{k-m}}$, unless $p=2$ and $m=1$, in which case $H=\Aut(G)$. 
\end{llemma}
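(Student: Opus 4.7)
The plan is to identify $\Aut(G)$ with the unit group $(\Z/p^{k}\Z)^{\times}$ via the correspondence $\phi\leftrightarrow r$ determined by $\phi(a)=a^{r}$ for a fixed generator $a$ of $G$. Since $N$ is the unique subgroup of $G$ of order $p^{m}$, it is generated by $a^{p^{k-m}}$, and an automorphism $\phi_{r}$ centralises $N$ precisely when it fixes this generator, which translates to the congruence $r\equiv 1\Mod{p^{m}}$. Thus $H$ corresponds to the kernel of the natural surjection $(\Z/p^{k}\Z)^{\times}\to(\Z/p^{m}\Z)^{\times}$, and comparing orders gives $|H|=p^{k-1}(p-1)/\bigl(p^{m-1}(p-1)\bigr)=p^{k-m}$.

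For the structure of $H$, I would split on the parity of $p$. When $p$ is odd, $(\Z/p^{k}\Z)^{\times}$ is cyclic, so every one of its subgroups is cyclic and $H\cong Z_{p^{k-m}}$ follows immediately.

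For $p=2$ and $k\geqslant 3$, I would invoke the decomposition $\Aut(G)=\langle x\rangle\times\langle y\rangle$ recorded in the introduction, with $x\colon a\mapsto a^{-1}$ of order $2$ and $y\colon a\mapsto a^{5}$ of order $2^{k-2}$. If $m=1$, the congruence $r\equiv 1\Mod{2}$ is automatic for units, so $H=\Aut(G)$, which fails to be cyclic when $k\geqslant 3$; this is the exception asserted in the statement. If $m\geqslant 2$, then $r\equiv 1\Mod{p^{m}}$ forces $r\equiv 1\Mod{4}$, and the subgroup of $(\Z/2^{k}\Z)^{\times}$ consisting of residues $\equiv 1\Mod{4}$ has order $2^{k-2}$ and contains $5$, so it coincides with $\langle y\rangle$; thus $H\leqslant\langle y\rangle$. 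Since $\langle y\rangle$ is cyclic, $H$ must equal its unique subgroup of order $2^{k-m}$, so $H\cong Z_{2^{k-m}}$. The remaining case $k=2$ forces $m=1$ and $H=\Aut(Z_{4})\cong Z_{2}$, which agrees with both descriptions.

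I do not anticipate any genuine obstacle; the argument is essentially bookkeeping about residues modulo $p^{k}$. The only point requiring care is the case $p=2$ with $m\geqslant 2$, where one must verify that the inversion automorphism falls outside $H$ and then conclude that $H$ is confined to the cyclic factor $\langle y\rangle$.
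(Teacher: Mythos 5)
Your proof is correct, and for most of the lemma it follows the same route as the paper: both arguments identify $H$ as the kernel of the natural map $\Aut(G)\to\Aut(N)$ (equivalently, reduction $(\Z/p^{k}\Z)^{\times}\to(\Z/p^{m}\Z)^{\times}$), obtain $|H|=p^{k-m}$ from surjectivity, and dispose of the odd case by cyclicity of $\Aut(G)$. Where you genuinely diverge is the subcase $p=2$, $m\geqslant 2$. The paper proceeds by contradiction: it supposes some $xy^{i}$ lies in the kernel, deduces that $\Aut(N)$ would then be cyclic, forces $m=2$, and then derives a contradiction from the fact that $y$ already centralises the $Z_{4}$ subgroup; from this it concludes $H\leqslant\langle y\rangle$ or $H=\langle x\rangle$. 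You instead observe directly that $r\equiv 1\Mod{2^{m}}$ with $m\geqslant 2$ forces $r\equiv 1\Mod{4}$, and that the residues $\equiv 1\Mod 4$ form a subgroup of order $2^{k-2}$ containing $5$, hence equal to $\langle y\rangle$; so $H\leqslant\langle y\rangle$ without any case analysis. Your version is shorter and positively identifies $H$ as the unique subgroup of $\langle y\rangle$ of order $2^{k-m}$ (the paper only concludes $H$ is cyclic), at the cost of invoking the fact that $y$ has order $2^{k-2}$ — which the paper's stated decomposition $\Aut(G)=\langle x\rangle\times\langle y\rangle$ already supplies. Both arguments are sound; yours is the cleaner treatment of the $2$-adic case.
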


\begin{proof} Let $G=\langle a\rangle$. Then we have that $N=\langle a^{p^{k-m}}\rangle$. Since $N$ is a characteristic subgroup of $G$, there exists a homomorphism $\pi: \mathrm{Aut}(G)\mapsto\mathrm{Aut}(N)$ where $\mathrm{Ker}(\pi)=H=C_{\mathrm{Aut}(G)}(N)$. Clearly $\pi$ is surjective, and so 
\begin{equation}\label{ch5equlem1}
|H|=\frac{|\mathrm{Aut}(G)|}{|\mathrm{Aut}(N)|}=p^{k-m}.
\end{equation}
If $p$ is an odd prime, then (\ref{ch5equlem1}) and the fact that $\Aut(G)$ is cyclic imply that $H\cong Z_{p^{k-m}}$. 

Suppose that $p=2$. Here $\Aut(G)=\langle x\rangle \times \langle y\rangle\cong Z_2\times Z_{2^{k-2}}$, where $x:a\mapsto a^{-1}$ and $y:a\mapsto a^5$. If $m=1$, then $|\Aut(N)|=1$, in which case $H=\mathrm{Aut} (G)$. Now suppose that $m\geqslant 2$. Since $H\leqslant \Aut(G)$, 
    we first suppose that $xy^{i}\in H$ for some $1\leqslant i\leqslant p^{k-2}$, that is, $(xy^{i})^{\pi}={\bf1}$. Thus  $x^{\pi}=(y^{i})^{\pi}=(y^{\pi})^{i}$. Thus Im$(\pi)=\Aut(N)=\langle x^{\pi}, y^{\pi}\rangle=\langle y^{\pi}\rangle$, which implies that $m=2$ as $\Aut(N)$ is cyclic if and only if $N=Z_{2^{2}}$ and $\Aut(N)\cong Z_{2}$. Thus $\langle y^{\pi}\rangle\cong Z_{2}$. Note that $N=\langle a^{2^{k-2}}\rangle=Z_{2^{2}}$. However, 
    we have that $(a^{2^{k-2}})^{y}=a^{2^{k-2}\cdot 5}=a^{2^{k-2}(2^{2}+1)}=a^{2^{k-2}}$. Thus $y\in$ Ker$(\pi)$,  contradicting $\langle y^{\pi}\rangle\cong Z_{2}$. Hence we conclude that $xy^{i}\notin H$ for all $1\leqslant i\leqslant p^{k-2}$. Thus either $H\leqslant \langle y\rangle$ or $H=\langle x\rangle$. In each of these two cases, we have that $H$ must be cyclic. Hence $H\cong Z_{2^{k-m}}$. \end{proof}

Let $G=Z_{n}$. Note that $n$ has a prime factorization $n=p_{1}^{k_{1}}\cdots p_{t}^{k_{t}}$ where $p_{i}$ is a prime. If $n$ is even, then we set $p_{1}=2$. Thus  
\begin{equation}\label{ch5equ1}
G\cong Z_{p_{1}^{k_{1}}}\times\cdots\times Z_{p_{t}^{k_{t}}},
\end{equation}
and 
\begin{equation}\label{ch5equ2}
\mathrm{Aut} (G)=\Aut(Z_{p_{1}^{k_{1}}})\times \cdots\times \Aut(Z_{p_{t}^{k_{t}}}).
\end{equation}
Let $a=(a_{1}, \ldots, a_{t})$ be a generator of $G$ where $a_{i}$ generates $Z_{p_{i}^{k_{i}}}$. It follows easily from (\ref{ch5equ1}) and (\ref{ch5equ2}) that for $x=(x_{1}, \ldots, x_{t})\in G$ and $\pi=(\pi_{1}, \ldots, \pi_{t})\in \mathrm{Aut} (G)$ we have that, 
\begin{equation}\label{ch5equ5}
x^{\pi}=(x_{1}^{\pi_{1}}, \ldots, x_{t}^{\pi_{t}}). 
\end{equation}
Let $\Gamma=\mathrm{Cay}(G, S)$ be a circulant and $A=\mathrm{Aut}(\Gamma)$.

\begin{llemma}\label{ch5lemcentralizer}
Suppose that $H\leqslant \Hol(G)$ is an abelian regular subgroup such that $H\neq G_{R}$. Let $N=G_{R}\cap H$. Then $N=Z_{p_{1}^{m_{1}}}\times \cdots\times Z_{p_{t}^{m_{t}}}$ 
with $1\leqslant m_{i}\leqslant k_{i}$ for all $1\leqslant i\leqslant t$, and  
\begin{equation}\label{ch5equ6}
C_{\mathrm{Aut} (G)}(N)=\prod_{i=1}^{t}C_{\Aut(Z_{p_{i}^{k_{i}}})}(Z_{p_{i}^{m_{i}}}).  
\end{equation}
\end{llemma}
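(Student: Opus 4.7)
My plan is to pass to an affine-group representation of $\Hol(G)$: writing $G$ additively as $\Z/n\Z$, the group $\Aut(G) \cong (\Z/n\Z)^{\times}$ acts by multiplication and every $h \in \Hol(G)$ becomes a pair $(\alpha, \beta) \in (\Z/n\Z)^{\times} \times (\Z/n\Z)$ acting on $G$ by $x \mapsto \alpha x + \beta$. The product law $(\alpha_1, \beta_1)(\alpha_2, \beta_2) = (\alpha_1 \alpha_2, \alpha_1 \beta_2 + \beta_1)$ shows that two such pairs commute if and only if $(\alpha_1 - 1)\beta_2 = (\alpha_2 - 1)\beta_1$ in $\Z/n\Z$; in these coordinates $G_R = \{(1, \beta)\}$ and $\Aut(G) = \{(\alpha, 0)\}$.

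Since $N = H \cap G_R$ is a subgroup of the cyclic group $G_R \cong Z_n$, it is automatically cyclic of some order $m \mid n$, and therefore has the stated shape $\prod_i Z_{p_i^{m_i}}$ with $0 \leq m_i \leq k_i$; the real content of the lemma is ruling out $m_i = 0$. Let $A$ denote the image of $H$ under $(\alpha, \beta) \mapsto \alpha$, a subgroup of $(\Z/n\Z)^{\times}$ of order $|H|/|N| = n/m$. Applying the commutativity relation to an arbitrary $(\alpha, \beta) \in H$ and any $(1, \beta') \in N$ yields $(\alpha - 1)\beta' \equiv 0 \pmod n$, and since $N$ is generated in $\Z/n\Z$ by $n/m$, this is equivalent to $\alpha \equiv 1 \pmod m$. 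Thus $A$ is contained in the kernel $U_m$ of the natural reduction $(\Z/n\Z)^{\times} \to (\Z/m\Z)^{\times}$, which by the Chinese Remainder Theorem is surjective and so has order $\phi(n)/\phi(m)$.

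The inequality $|A| \leq |U_m|$ then reads $n/m \leq \phi(n)/\phi(m)$, or equivalently $\phi(m)/m \leq \phi(n)/n$. Writing $\phi(k)/k = \prod_{p \mid k}(1 - 1/p)$ and noting that the primes dividing $m$ form a subset of those dividing $n$, the opposite inequality $\phi(m)/m \geq \phi(n)/n$ is automatic, with equality if and only if $m$ and $n$ share the same prime divisors. Hence every prime of $n$ divides $m$, giving $m_i \geq 1$ for all $i$. The centralizer formula then drops out for free: under the canonical decomposition $\Aut(G) = \prod_i \Aut(Z_{p_i^{k_i}})$ acting componentwise on $G = \prod_i Z_{p_i^{k_i}}$, an automorphism $(\sigma_1, \ldots, \sigma_t)$ fixes every element of $N = \prod_i Z_{p_i^{m_i}}$ precisely when each $\sigma_i$ fixes $Z_{p_i^{m_i}}$ pointwise, yielding the stated product of local centralizers. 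The main obstacle is recognising that abelianness of $H$ must be used globally via the containment $A \leq U_m$: a naive comparison of the $p_i$-parts of $|H|$ against $|\Aut(G)|$ is not sufficient to rule out $m_i = 0$ when other primes $p_j$ dividing $n$ satisfy $p_j \equiv 1 \pmod{p_i}$, so the clean $\phi$-counting is what forces abelianness to do the required work.
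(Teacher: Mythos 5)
Your proof is correct, and its skeleton is the same as the paper's: both arguments use abelianness of $H$ to force the image of $H$ in $\Aut(G)$ into $C_{\Aut(G)}(N)$, and then compare orders to rule out $m_i=0$. The difference is in how that count is executed. The paper computes the centraliser componentwise, invoking its Lemma~\ref{ch5lem1} to get $|C_{\Aut(Z_{p_i^{k_i}})}(Z_{p_i^{m_i}})|=p_i^{k_i-m_i}$, keeps the full factors $\Aut(Z_{p_j^{k_j}})$ for primes $p_j$ not dividing $|N|$, and reaches a contradiction by examining the $p_w$-part of the resulting divisibility for the \emph{largest} prime $p_w$ not dividing $|N|$ (so that $p_w\nmid(p_j-1)$ for all relevant $j$). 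You instead identify $C_{\Aut(G)}(N)$ globally as the congruence subgroup $U_m=\ker\bigl((\Z/n\Z)^{\times}\to(\Z/m\Z)^{\times}\bigr)$ of order $\phi(n)/\phi(m)$ and collapse the whole count into the single inequality $\phi(m)/m\le\phi(n)/n$, which by multiplicativity of $k\mapsto\phi(k)/k$ forces $m$ and $n$ to share all prime divisors. This buys a cleaner, reference-free argument that sidesteps both Lemma~\ref{ch5lem1} and the largest-prime trick; the paper's componentwise bookkeeping is not wasted, however, since the explicit product form of the centraliser and its order are reused downstream in Corollary~\ref{ch5corocentralizer} and Lemma~\ref{ch5them31}. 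Your closing observation --- that the crude bound $|H|/|N|\mid\phi(n)$ only yields $|N|\neq1$ and cannot exclude $m_i=0$ when some $p_j\equiv1\pmod{p_i}$ --- correctly pinpoints why the centraliser refinement is needed in either version.
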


\begin{proof} Recall that $\Hol(G)=G_{R}\rtimes \mathrm{Aut}(G)$. By the Second Isomorphism Theorem, we have that 
\begin{equation*}
H/N\cong HG_{R}/G_{R}\leqslant \Hol(G)/G_{R}=\mathrm{Aut}(G).
\end{equation*} 
Hence
\begin{equation}\label{ch5equ10}
\frac{|H|}{|N|} \ \quad \textrm{divides}\quad \ p_{1}^{k_{1}-1}(p_{1}-1)\cdots p_{t}^{k_{t}-1}(p_{t}-1).
\end{equation}
Since the right hand side is less than $|H|$, we have that $|N|\neq 1$. Thus $N=\prod_{i\in I}Z_{p_{i}^{m_{i}}}$ with $1\leqslant m_{i}\leqslant k_{i}$ and $\emptyset \neq I\subseteq \{1, \ldots, t\}$. Here notice that $N\neq H$ as we assume $H\neq G_{R}$.  

Let $C_{\Hol(G)}(N)$ be the centraliser of $N$ in $\Hol(G)$. Obviously $G_{R}\leqslant C_{\Hol(G)}(N)$. Thus $C_{\Hol(G)}(N)=G_{R}\rtimes C_{\mathrm{Aut} (G)}(N)$. Moreover, $H\leqslant C_{\Hol(G)}(N)$ as $H$ is abelian. Again by the Second Isomorphism Theorem, 
\begin{equation}\label{ch5equ4}
H/N\cong HG_{R}/G_{R}\leqslant C_{\mathrm{Aut} (G)}(N).
\end{equation}

Let $\sigma\in C_{\mathrm{Aut} (G)}(N)$. Using (\ref{ch5equ1}) and (\ref{ch5equ2}), write $n=(n_{1} \ldots, n_{t})$ and $\sigma=(\sigma_{1}, \ldots, \sigma_{t})$ where $\sigma_{i}\in \Aut(Z_{p_{i}^{k_{i}}})$ and $n_{i}\in Z_{p_{i}^{m_{i}}}$ when $i\in I$, and $n_{j}={\bf 1}$ for $j\notin I$. Then by (\ref{ch5equ5}) we have that 
\begin{equation*}
n^{\sigma}=(n_{1}^{\sigma_{1}},  \ldots, n_{t}^{\sigma_{t}})
\end{equation*}
Thus an automorphism $\sigma$ of $G$ centralises $N$ if and only if  each $\sigma_{i}$ fixes each element in $Z_{p_{i}^{m_{i}}}$ for $i\in I$. By (\ref{ch5equ2}), we have that
\begin{equation}\label{ch5equcentralizer}
C_{\mathrm{Aut} (G)}(N)=\left(\prod_{i\in I}C_{\Aut(Z_{p_{i}^{k_{i}}})}(Z_{p_{i}^{m_{i}}})\right)\left(\prod_{j\notin I}\Aut(Z_{p_{j}^{k_{j}}})\right).
\end{equation}
It follows from (\ref{ch5equ4}) 
and Lemma~\ref{ch5lem1} that 
\begin{equation}\label{ch5equ7}
|H/N|=\left(\prod_{i\in I}p_{i}^{k_{i}-m_{i}}\right)\left(\prod_{j\notin I}p_{j}^{k_{j}}\right) \quad \textrm{divides}\quad  \left(\prod_{i\in I}p_{i}^{k_{i}-m_{i}}\right)\left(\prod_{j\notin I}p_{j}^{k_{j-1}}(p_{j}-1)\right).
\end{equation}

Suppose that  $I$ is a proper subset of $\{0,1 , \ldots, t\}$. Let $p_{w}$ be the largest prime such that $w\notin I$. Then $p_{w}^{k_{w}}$ divides $ |H/N|$, and so by (\ref{ch5equ7}) we have that $p_{w}^{k_{w}}$ divides $\prod_{j\notin I}p_{j}^{k_{j-1}}(p_{j}-1)$, which leads to a contradiction. Thus $w\in I$. This implies that $I=\{1, \ldots, t\}$, and so $N=Z_{p_{1}^{m_{1}}}\times \cdots \times Z_{p_{t}^{m_{t}}}$ 
with $1\leqslant m_{i}\leqslant k_{i}$ for all $1\leqslant i\leqslant t$, and so (\ref{ch5equ6}) follows. \end{proof}

The following corollary follows from Lemma~\ref{ch5lem1} and Lemma~\ref{ch5lemcentralizer}. 

\begin{coro}\label{ch5corocentralizer}
Let $H\leqslant \Hol(G)$ be an abelian regular subgroup. Let $N=G_{R}\cap H$. 
\begin{enumerate}[(1)]
\item If $n$ is odd, then
\begin{equation*}
C_{\mathrm{Aut} (G)}(N)=\prod_{i=1}^{t}Z_{p_{i}^{k_{i}-m_{i}}},
\end{equation*} 
\item If $n$ is even, then 
\begin{equation*}
C_{\mathrm{Aut} (G)}(N) =\begin{cases}
               \Aut(Z_{2^{k_{1}}})\times \prod_{i=2}^{t}Z_{p_{i}^{k_{i}-m_{i}}}      & \mathrm{if}\  m_{1}=1, \\
               Z_{2^{k_{1}-m_{1}}}\times \prod_{i=2}^{t}Z_{p_{i}^{k_{i}-m_{i}}}   & \mathrm{if} \ 2\leqslant m_{1}\leqslant k_{1}. 
               \end{cases}
\end{equation*}
\end{enumerate}
Moreover, $|C_{\mathrm{Aut} (G)}(N) |=|H: N|$. 
\end{coro}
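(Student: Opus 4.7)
My plan is to derive the corollary by substituting the explicit form of each factor from Lemma~\ref{ch5lem1} into the product decomposition established in Lemma~\ref{ch5lemcentralizer}. The hard work of identifying the structure of $N=Z_{p_1^{m_1}}\times\cdots\times Z_{p_t^{m_t}}$ and the factorisation $C_{\Aut(G)}(N)=\prod_{i=1}^{t}C_{\Aut(Z_{p_i^{k_i}})}(Z_{p_i^{m_i}})$ has already been done there, so the task here is essentially bookkeeping, split according to the parity of $n$.

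First I would handle part (1): when $n$ is odd every $p_i$ is odd, so Lemma~\ref{ch5lem1} yields $C_{\Aut(Z_{p_i^{k_i}})}(Z_{p_i^{m_i}})\cong Z_{p_i^{k_i-m_i}}$ for each $i$, and inserting these cyclic factors into (\ref{ch5equ6}) immediately produces the stated formula. For part (2) I would separate off the prime $p_1=2$ from the odd primes $p_2,\ldots,p_t$; the factors for $i\geqslant 2$ are cyclic of order $p_i^{k_i-m_i}$ as before, while the factor at $i=1$ bifurcates according to Lemma~\ref{ch5lem1}: it is all of $\Aut(Z_{2^{k_1}})$ when $m_1=1$ and is cyclic of order $2^{k_1-m_1}$ when $m_1\geqslant 2$. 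Combining these cases reproduces the two lines of part (2).

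For the "moreover" clause, I would compute $|C_{\Aut(G)}(N)|$ directly from the case analysis and notice that in every case it equals $\prod_{i=1}^{t}p_i^{k_i-m_i}$; the only point to check is the apparent exception where $p_1=2$ and $m_1=1$, but there $|\Aut(Z_{2^{k_1}})|=2^{k_1-1}=2^{k_1-m_1}$, so the uniform formula persists. On the other hand, the proof of Lemma~\ref{ch5lemcentralizer} showed that $I=\{1,\ldots,t\}$ and, by the leftmost equality of (\ref{ch5equ7}), that $|H/N|=\prod_{i=1}^{t}p_i^{k_i-m_i}$ as well. Equating the two products gives $|C_{\Aut(G)}(N)|=|H:N|$.

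There is no serious obstacle: Lemmas~\ref{ch5lem1} and~\ref{ch5lemcentralizer} carry all the content. The only mild subtlety is remembering that the $p=2,\ m=1$ exception in Lemma~\ref{ch5lem1} changes the \emph{structure} of the centralising factor (from cyclic to $Z_2\times Z_{2^{k_1-2}}$) but preserves its \emph{order}, which is exactly why the final order equality can be stated without case distinctions.
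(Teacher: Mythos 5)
Your proposal is correct and takes essentially the same route as the paper, which offers no separate argument but simply states that the corollary ``follows from Lemma~\ref{ch5lem1} and Lemma~\ref{ch5lemcentralizer}''; substituting the explicit factors from Lemma~\ref{ch5lem1} into the decomposition (\ref{ch5equ6}) and reading off $|H:N|=\prod_{i=1}^{t}p_i^{k_i-m_i}$ from (\ref{ch5equ7}) with $I=\{1,\ldots,t\}$ is exactly that deduction. Your observation that the $p=2$, $m=1$ exception changes the structure but not the order of the local factor is the right point to single out.
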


\begin{llemma}\label{ch5lem2}
Let $\Phi=(\phi_{1}, {\bf 1}, \ldots, {\bf 1})\in \mathrm{Aut} (G)$ with $|\phi_{1}|=p_{1}$. Suppose that $p_{1}$ is odd. If $\Gamma$ is a normal circulant, then $\Phi\notin \mathrm{Aut} (G, S)$.
\end{llemma}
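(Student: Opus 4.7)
The plan is to argue by contradiction: assume $\Phi \in \Aut(G, S)$, so that $\Phi \in \Aut(\Gamma)$ and $\Aut(\Gamma) = G_R \rtimes \Aut(G, S) \le \Hol(G)$. The main idea is to exhibit a second cyclic regular subgroup of $\Aut(\Gamma)$ distinct from $G_R$, and then derive a contradiction with $\Aut(\Gamma) \le \Hol(G)$.

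First I would pin down $\phi_1$: since $\Aut(Z_{p_1^{k_1}})$ is cyclic of order $p_1^{k_1-1}(p_1-1)$ and $|\phi_1| = p_1$, the element $\phi_1$ lies in the unique subgroup of order $p_1$, so (after replacing $\Phi$ by an appropriate power) I may assume $\phi_1\colon a_1 \mapsto a_1^c$ with $c = 1 + p_1^{k_1-1}$; in particular $k_1 \ge 2$. Next I would form $f := a_R\Phi \in \Aut(\Gamma)$ and use the semidirect product formula
\[
f^k = \bigl(a \cdot \Phi(a) \cdots \Phi^{k-1}(a)\bigr)_R \cdot \Phi^k = \bigl(a_1^{s_k},\, a_2^k,\, \ldots,\, a_t^k\bigr)_R \cdot \Phi^k,
\]
where $s_k = 1 + c + \cdots + c^{k-1}$. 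The congruence $c^j \equiv 1 + j p_1^{k_1-1} \pmod{p_1^{k_1}}$ (valid for $k_1 \ge 2$) yields $s_k \equiv k + p_1^{k_1-1}k(k-1)/2 \pmod{p_1^{k_1}}$, and since $p_1$ is odd (so $2$ is invertible modulo $p_1^{k_1}$), a short analysis of the $p_1$-adic valuations shows that $f^k = \mathrm{id}$ forces $n \mid k$. Hence $f$ has order $n$, $\langle f \rangle$ acts regularly on $G$, $\langle f \rangle \cap G_R = \langle f^{p_1} \rangle = \langle (a^{p_1})_R \rangle$ has index $p_1$ in $\langle f \rangle$, and $\langle f \rangle \ne G_R$.

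The final contradiction should be extracted by combining this with Corollary~\ref{ch5corocentralizer} and the orbit structure of $\Phi$ on $G$. Writing $K := \langle a_1^{p_1^{k_1-1}} \rangle \cong Z_{p_1}$, the $\Phi$-orbits on $G$ are either singletons (inside $\langle a^{p_1} \rangle$) or full $K$-cosets (outside $\langle a^{p_1} \rangle$); since $\Phi$ preserves $S$, this forces $S \setminus \langle a^{p_1} \rangle$ to be a union of $K$-cosets, giving $\Gamma$ a natural block structure on the $K$-cosets outside $\langle a^{p_1} \rangle$. The hard step is to use this block structure together with the existence of the second regular subgroup $\langle f \rangle$ to produce a non-affine graph automorphism of $\Gamma$ — for example, a permutation of $G$ which permutes a single $K$-coset nontrivially and fixes every other vertex. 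Any such permutation is not of affine form $x \mapsto \alpha x + \beta$, so does not lie in $\mathrm{AGL}(1, \Z_n) = \Hol(G)$, contradicting $\Aut(\Gamma) \le \Hol(G)$. The chief obstacle is producing this non-affine automorphism rigorously, rather than just noting the combinatorial obstruction.
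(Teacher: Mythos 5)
Your proposal has a genuine gap at exactly the point where the proof's content lies, and you acknowledge as much. The detour through the second regular subgroup $\langle f\rangle=\langle a_R\Phi\rangle$ does not by itself yield a contradiction: a Cayley graph $\Gamma$ is normal for $G$ precisely when $\Aut(\Gamma)\leqslant\Hol(G)$, and $\Hol(G)$ may well contain several regular cyclic subgroups (the paper's Theorem~\ref{ch6maintheo} exhibits many in the case $p=2$), so "there is a second regular cyclic subgroup inside $\Hol(G)$" is not incompatible with normality. Everything therefore rests on your final step — exhibiting an automorphism of $\Gamma$ that fixes $\mathbf{1}$ and is not induced by a group automorphism — and that step is left open.

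Moreover, the specific candidate you sketch (translate a single $K$-coset by an element of $K=\langle a_1^{p_1^{k_1-1}}\rangle$ and fix every other vertex) fails in general. Take an edge $\{u,v\}$ with $u$ in the moved coset and $s=uv^{-1}=a^{r}$ where $p_1\mid r$ but $s\notin K$; then $v=us^{-1}$ lies outside the moved coset, so $v$ is fixed while $u$ is translated, and $u^{\theta}(v^{\theta})^{-1}=s\cdot(\text{generator of }K)$ need not lie in $S$ — only the part of $S$ consisting of elements $a^r$ with $(r,p_1)=1$ is forced by $\Phi$-invariance to be a union of $K$-cosets. The paper's construction avoids this by translating an entire coset of the index-$p_1$ subgroup $\langle a^{p_1}\rangle$ (the coset $\{a^{r}:r\equiv 2\ (\mathrm{mod}\ p_1)\}$) by a generator of $P=K$: any edge leaving that set has connection element with exponent coprime to $p_1$, hence lying in the $P$-coset-closed part of $S$, while edges inside or outside the set are translated uniformly or untouched. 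Since $p_1$ is odd, this coset contains neither $\mathbf{1}$ nor $a$, so the resulting permutation $\theta$ fixes $\mathbf{1}$ and the generator $a$ yet is nontrivial, and hence cannot lie in $\Aut(G,S)$; this is the non-affine automorphism you were seeking. Note also that the paper never needs the subgroup $\langle f\rangle$ at all.
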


\begin{proof} {\it (The idea for the arguments of this lemma arises from the proof of \cite[Theorem 3.2]{xu2004})}. Recall that $G=\langle a\rangle$ with $a=(a_{1}, \ldots, a_{t})$. Let $M=p_{2}^{k_{2}}\cdots p_{t}^{k_{t}}$. We know that there is a unique subgroup $P$ of order $p_{1}$ in $G$, namely, 
$$P=\langle a^{Mp_{1}^{k_{1}-1}}\rangle.$$

Let $S_{1}=\{s\in S\mid s=a^{r}\ \mathrm{with} \ (r, p_{1})=1\}$, $S_{3}=S\cap (P\backslash\{\bf 1\})$, and $S_{2}=S\backslash (S_{1}\cup S_{3})$. Let $X_{1}=\mathrm{Cay} (G, S_{1}\cup S_{3})$ and $X_{2}=\mathrm{Cay} (G, S_{2})$. Thus $E(\Gamma)=E(X_{1})\cup E(X_{2})$. 

Suppose to the contrary that $\Phi\in \mathrm{Aut} (G, S)$. Note that this implies $k_{1}> 1$. Then for each $s\in S$, we have that $s^{\langle\Phi\rangle}\subseteq S$. Since $|\phi_{1}|=p_{1}$, we may assume that  
\begin{equation*}
\Phi: (a_{1}, \ldots, a_{t})\to (a_{1}^{p_{1}^{k_{1}-1}+1}, a_{2}, \ldots, a_{t}).
\end{equation*}

Let $s\in S_{1}$, and suppose that  $s=a^{r}$. Then 
\begin{equation*}
s^{\Phi}=(a_{1}^{r}, \ldots, a_{t}^{r})^{\Phi}=(a_{1}^{r(p_{1}^{k_{1}-1}+1)}, a_{2}^{r}, \ldots, a_{t}^{r})=s(a_{1}^{rp_{1}^{k_{1}-1}}, {\bf 1}, \ldots, {\bf 1})=sa^{Mrp_{1}^{k_{1}-1}}.
\end{equation*}
Since $\Phi$ centralises $P$, we have that 
\begin{equation*}
s^{\langle\Phi\rangle}=\{sa^{Mrp_{1}^{k_{1}-1}}, \ldots, sa^{M(p_{1}-1)rp_{1}^{k_{1}-1}}\}.
\end{equation*}
Since $(r, p_{1})=1$, we have that $s^{\langle\Phi\rangle}=sP$.

Since $p_{1}$ is odd, there is a permutation $\theta: G\to G$ defined as below: for each $x\in G$, write $x=a^{r_{x}}$ and then define 
\begin{equation}\label{ch5equ9}
x^{\theta}= \begin{cases}
               x               & \mathrm{if} \ r_{x}\not\equiv 2\Mod {p_{1}}, \\
               xa^{Mp_{1}^{k_{1}-1}}               & \mathrm{if} \ r_{x} \equiv 2\Mod {p_{1}}.
               \end{cases}
\end{equation}
We claim that $\theta\in A_{\bf 1}$ but $\theta\notin \mathrm{Aut} (G, S)$. Let $u=a^{r_{u}}$ and $v=a^{r_{v}}$. Suppose that  $\{u, v\}\in E(\Gamma)$. Then $uv^{-1}=a^{r_{u}-r_{v}}=s$ for some $s\in S$. 

Suppose first that $r_{u}\not\equiv 2\Mod {p_{1}}$ and $r_{v}\not\equiv 2 \Mod {p_{1}}$. Thus $u^{\theta}(v^{\theta})^{-1}=uv^{-1}=s$, and so $\theta\in \Aut(\Gamma)$. Next suppose that $r_{u}\equiv 2\Mod {p_{1}}$ and $r_{v}\equiv 2 \Mod {p_{1}}$. Thus  
\begin{equation*}
u^{\theta}(v^{\theta})^{-1}=ua^{Mp_{1}^{k_{1}-1}}(a^{Mp_{1}^{k_{1}-1}})^{-1}v^{-1}=uv^{-1}=s,
\end{equation*}
and so $\theta\in \Aut(\Gamma)$.

Finally, swapping $u$ and $v$ if necessary, we are left to consider $r_{u}\equiv 2\Mod {p_{1}}$ and $r_{v}\not\equiv 2 \Mod {p_{1}}$. Notice that, if $s\in S_{2}\cup S_{3}$, we have that $p_{1}\mid (r_{u}-r_{v})$, that is, $r_{u}\equiv r_{v}\Mod {p_{1}}$. This leads to a contradiction, and so we may assume that $s\in S_{1}$. Since 
\begin{equation*}
u^{\theta}(v^{\theta})^{-1}=ua^{Mp_{1}^{k_{1}-1}}v^{-1}=sa^{Mp_{1}^{k_{1}-1}},
\end{equation*} 
we have that $u^{\theta}(v^{\theta})^{-1}\in sP=s^{\langle\Phi\rangle}\subseteq S$. Thus $\theta\in \Aut(\Gamma)$. Hence we have shown that $\theta\in \Aut(\Gamma)$ in all cases. 

In fact $\theta\in A_{\bf 1}$ as $\theta$ fixes {\bf 1}. Also since $\theta$ fixes the group generator $g$, if $\theta\in \mathrm{Aut} (G)$, then $\theta={\bf 1}$, which is a contradiction by the definition of $\theta$. Thus $\theta\notin \mathrm{Aut} (G, S)$, and so $\Gamma$ is non-normal, which is a contradiction. Therefore, we must have $\Phi\notin \mathrm{Aut} (G, S)$. \end{proof}

\begin{coro}\label{ch5cor1}
For each $1\leqslant i \leqslant t$, let $\Phi_{i}=(\phi_{1}, \phi_{2}, \ldots, \phi_{t})$ where  $|\phi_i|=p_i$ with $p_i$ odd, and $\phi_j=1$ for all $j\neq i$.
 If $\Gamma$ is a normal circulant, then $\Phi_{i}\notin \mathrm{Aut} (G, S)$.
\end{coro}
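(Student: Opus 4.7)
The plan is to reduce Corollary~\ref{ch5cor1} to Lemma~\ref{ch5lem2} by exploiting the symmetry of the direct product decomposition in (\ref{ch5equ1}). The only convention imposed there is that $p_{1}=2$ when $n$ is even; otherwise the ordering of the prime-power factors is arbitrary. Inspection of the proof of Lemma~\ref{ch5lem2} shows that the argument does not rely on $p_{1}$ being the first prime in any intrinsic sense: it uses only that $p_{1}$ is an odd prime dividing $|G|$, so that the unique subgroup of order $p_{1}$ is well-defined and the permutation $\theta$ in (\ref{ch5equ9}) (which needs a residue class mod $p_{1}$ disjoint from $0$) is legitimate.

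I would therefore fix an index $i$ with $p_{i}$ odd, and reindex the cyclic factors of $G$ so that $Z_{p_{i}^{k_{i}}}$ is listed first. Concretely, let $\rho$ be the permutation of $\{1,\ldots,t\}$ that swaps $1$ and $i$ and fixes everything else, and relabel $p_{j}':=p_{\rho(j)}$, $k_{j}':=k_{\rho(j)}$, $a_{j}':=a_{\rho(j)}$. Under this relabelling the generator $a$ is expressed as $a=(a_{1}',\ldots,a_{t}')$ with $a_{1}'$ generating $Z_{p_{i}^{k_{i}}}$, and the automorphism $\Phi_{i}=(\phi_{1},\ldots,\phi_{t})$ becomes $(\phi_{i},1,\ldots,1)$ in the new coordinates. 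The underlying group $G$, its connection set $S$, and the Cayley graph $\Gamma$ are unchanged, and $\Gamma$ remains a normal circulant for $G$.

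Applied in the reindexed setting, Lemma~\ref{ch5lem2} then yields directly that $\Phi_{i}\notin\Aut(G,S)$, which is the desired conclusion. The only point requiring genuine verification is the claim made above that Lemma~\ref{ch5lem2} is insensitive to the $p_{1}=2$ convention when $n$ is even; I expect this to be routine, since the proof of that lemma uses only the arithmetic of the odd prime governing the coordinate on which $\Phi$ acts nontrivially, and never invokes the value of $p_{1}$ elsewhere in the decomposition. No new ideas beyond Lemma~\ref{ch5lem2} itself are needed.
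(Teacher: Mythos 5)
Your proposal is correct and is essentially identical to the paper's own proof, which likewise observes that the ordering of the prime-power factors in (\ref{ch5equ1}) and (\ref{ch5equ2}) was arbitrary, swaps the $i^{\mathrm{th}}$ factor with the first, and applies Lemma~\ref{ch5lem2}. Your extra remark that the proof of Lemma~\ref{ch5lem2} only uses that the distinguished prime is odd (and is unaffected by the $p_{1}=2$ convention) is accurate and makes the reduction slightly more explicit than the paper's.
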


\begin{proof} Since the order of the prime factors of $n$ was not specified in  (\ref{ch5equ1}) and (\ref{ch5equ2}), for each $1\leqslant i\leqslant t$ with $p_i$ odd, we may switch the $i^\mathrm{th}$ factor with the first factor. Then $\Phi_i$ becomes the $\Phi$ in Lemma \ref{ch5lem2} and the result follows.
\end{proof}

The next result is a consequence  following from Corollary~\ref{ch5corocentralizer} and Corollary~\ref{ch5cor1}.  

\begin{llemma}\label{ch5them31}
Let $\Gamma$ be a normal circulant for $G$ and $H\leqslant A=\Aut(\Gamma)$ be an abelian regular subgroup such that $H\neq G_{R}$. Let $N=G_{R}\cap H$. Then $|G_{R}:N|$ is a power of $2$.
\end{llemma}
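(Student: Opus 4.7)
The plan is to chain Corollary~\ref{ch5corocentralizer} with Corollary~\ref{ch5cor1}: normality will force a large centraliser into $\mathrm{Aut}(G,S)$, and Corollary~\ref{ch5cor1} will then forbid any odd-prime contribution to that centraliser.

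First I would use normality to place $H/N$ inside $\mathrm{Aut}(G,S)$. Since $\Gamma$ is normal for $G$, we have $A=G_R\rtimes\mathrm{Aut}(G,S)\leq \Hol(G)$, so the projection $\pi:\Hol(G)\to\Hol(G)/G_R\cong\mathrm{Aut}(G)$ restricts on $A$ to an isomorphism $A/G_R\cong\mathrm{Aut}(G,S)$. Let $\widetilde H:=\pi(H)=HG_R/G_R$, so $\widetilde H\cong H/N$ and $\widetilde H\leq\mathrm{Aut}(G,S)$. Since $H$ is abelian, $\widetilde H$ also centralises $N$, so $\widetilde H\leq C_{\mathrm{Aut}(G)}(N)$. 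Now I compare orders: Corollary~\ref{ch5corocentralizer} gives $|C_{\mathrm{Aut}(G)}(N)|=|H:N|=|\widetilde H|$, so the containment $\widetilde H\leq C_{\mathrm{Aut}(G)}(N)$ is an equality. Hence $C_{\mathrm{Aut}(G)}(N)\leq\mathrm{Aut}(G,S)$.

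To conclude, I would suppose for contradiction that some odd prime $p_i$ divides $|G_R:N|=\prod_j p_j^{k_j-m_j}$. Then $m_i<k_i$, and by Corollary~\ref{ch5corocentralizer} the $i$-th factor $C_{\mathrm{Aut}(Z_{p_i^{k_i}})}(Z_{p_i^{m_i}})\cong Z_{p_i^{k_i-m_i}}$ is nontrivial. Picking an element of order $p_i$ in this factor and reading it through the product decomposition (\ref{ch5equ2}) yields exactly an element $\Phi_i=(\phi_1,\ldots,\phi_t)$ with $|\phi_i|=p_i$ and $\phi_j=\mathbf{1}$ for $j\neq i$, of the shape appearing in Corollary~\ref{ch5cor1}. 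By the previous paragraph $\Phi_i\in C_{\mathrm{Aut}(G)}(N)\leq\mathrm{Aut}(G,S)$, contradicting Corollary~\ref{ch5cor1}. Therefore no odd prime can divide $|G_R:N|$, which is the desired conclusion.

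I do not anticipate any serious obstacle here; the argument is essentially a bookkeeping exercise gluing the two corollaries together. The one point that needs care is the identification of $\widetilde H$: the copy of $H/N$ that is known to lie in $\mathrm{Aut}(G,S)$ by normality is the very same subgroup of $\mathrm{Aut}(G)$ that appears in Corollary~\ref{ch5corocentralizer}, and it is the matching of cardinalities there that upgrades the inclusion to equality and thereby drags \emph{all} of $C_{\mathrm{Aut}(G)}(N)$ into $\mathrm{Aut}(G,S)$. As a sanity check, if $n$ is odd the argument forces $m_i=k_i$ for every $i$, hence $N=G_R$ and $H=G_R$, so the hypothesis $H\neq G_R$ is vacuous in that case --- consistent with the stated conclusion, since $2^0=1$ is a power of $2$.
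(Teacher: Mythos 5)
Your proposal is correct and follows the paper's own argument essentially verbatim: both use normality plus the abelian hypothesis to force $H/N$ into $C_{\mathrm{Aut}(G,S)}(N)$, invoke Corollary~\ref{ch5corocentralizer} to match cardinalities and conclude $C_{\mathrm{Aut}(G)}(N)\leqslant \mathrm{Aut}(G,S)$, and then derive a contradiction from Corollary~\ref{ch5cor1} whenever some odd $p_i$ has $m_i<k_i$. No gaps.
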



\begin{proof} Since $\Gamma$ is normal for $G$, we have that $A=G_{R}\rtimes \Aut(G, S)$. Let $C_{A}(N)$ be the centraliser of $N$ in $A$. Since $G_{R}\leqslant C_{A}(N)$, we have that $C_{A}(N)=G_{R}\rtimes C_{\mathrm{Aut} (G, S)}(N)$. Moreover, $H\leqslant C_{A}(N)$ as $H$ is abelian. Thus 
\begin{equation}\label{ch5equthem31}
H/N\cong HG_{R}/G_{R}\leqslant C_{\mathrm{Aut} (G, S)}(N)\leqslant C_{\mathrm{Aut} (G)}(N).
\end{equation}
It follows from Corollary~\ref{ch5corocentralizer} that $|C_{\Aut(G)}(N)|=|H:N|$. Thus by (\ref{ch5equthem31}) we have that $C_{\Aut(G, S)}(N)=C_{\Aut(G)}(N)$. Moreover, by Lemma~\ref{ch5lemcentralizer} we have that $N=\prod_{i=1}^{t}Z_{p_{i}^{m_{i}}}$ with $1\leqslant m_{i}\leqslant k_{i}$ for all $1\leqslant i\leqslant t$. 

Suppose that there exists some $1\leqslant j\leqslant t$ where $p_{j}$ is odd and $m_{j}\leqslant k_{j}-1$. Recall that $Z_{p_{j}^{k_{j}}}$ is generated by $a_{j}$. Since $k_{j}-m_{j}\geqslant 1$, there is an automorphism $\rho$ of order $p_{j}$ in $C_{\Aut(Z_{p_{j}^{k_{j}}})}(Z_{p_{j}^{m_{j}}})\cong Z_{p_{j}^{k_{j}-m_{j}}}$. Let $\tau=(\tau_{1}, \ldots, \tau_{t})\in C_{\mathrm{Aut}(G)}(N)$ where $\tau_{j}=\rho$ and $\tau_{i}={\bf 1}$ for all $i\neq j$. By Corollary~\ref{ch5cor1}, if $\Gamma$ is normal, then $\tau\notin \mathrm{Aut} (G, S)$, which is a contradiction to $C_{\mathrm{Aut} (G, S)}(N)=C_{\mathrm{Aut} (G)}(N)$. Thus for all odd prime factors $p_{i}$ where $1\leqslant i\leqslant t$, we have that $m_{i}=k_{i}$. Hence $|G_{R}:N|$ is a power of 2.  \end{proof}

\begin{guess}\label{ch5coroabelian}
Let $\Gamma$ be a normal circulant for $G$ with $4\nmid |G|$. Then $G_{R}$ is the unique abelian regular subgroup contained in $A$.  
\end{guess}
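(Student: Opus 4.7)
My approach is to combine Lemma~\ref{ch5them31} with the structural information of Lemma~\ref{ch5lemcentralizer} and then read off the conclusion from the numerical hypothesis $4\nmid |G|$. Suppose for a contradiction that $H\leqslant A$ is an abelian regular subgroup with $H\neq G_{R}$, and set $N=G_{R}\cap H$. Since $\Gamma$ is normal for $G$, we have $A=G_{R}\rtimes\Aut(G,S)\leqslant \Hol(G)$, so Lemma~\ref{ch5them31} applies and yields that $|G_{R}:N|$ is a power of~$2$.

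Next I would invoke Lemma~\ref{ch5lemcentralizer} to pin down the shape of $N$: it gives $N\cong Z_{p_{1}^{m_{1}}}\times\cdots\times Z_{p_{t}^{m_{t}}}$ with $1\leqslant m_{i}\leqslant k_{i}$ for every $i$, and hence
\[
|G_{R}:N|=\prod_{i=1}^{t}p_{i}^{k_{i}-m_{i}}.
\]
For this product to be a power of~$2$, one must have $m_{i}=k_{i}$ for every odd prime $p_{i}$ appearing in the factorisation of $n$.

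The hypothesis $4\nmid |G|$ now leaves only two cases. If $|G|$ is odd, no $p_{i}$ equals $2$, and we have just shown $m_{i}=k_{i}$ for every~$i$. If $|G|=2m$ with $m$ odd, then $p_{1}=2$ and $k_{1}=1$, and the constraint $m_{1}\geqslant 1$ forces $m_{1}=1=k_{1}$. Either way, $m_{i}=k_{i}$ for all~$i$, so $N=G_{R}$. Since $H$ is regular, $|H|=|G_{R}|$, and therefore $H=N=G_{R}$, contradicting our assumption $H\neq G_{R}$.

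The main structural input is supplied by Lemmas~\ref{ch5them31} and~\ref{ch5lemcentralizer}, so no serious obstacle remains: the final step is simply the numerical observation that, under the stated divisibility hypothesis, the \emph{power of $2$} constraint on $|G_{R}:N|$ collapses to the trivial constraint $|G_{R}:N|=1$.
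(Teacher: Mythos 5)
Your proposal is correct and follows essentially the same route as the paper: both arguments combine Lemma~\ref{ch5them31} (the index $|G_{R}:N|$ is a power of $2$) with the structure of $N$ from Lemma~\ref{ch5lemcentralizer}, and then observe that the hypothesis $4\nmid |G|$ forces $m_{i}=k_{i}$ for every $i$, hence $N=H=G_{R}$. The only cosmetic difference is that you frame it as an explicit contradiction while the paper states the conclusion directly.
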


\begin{proof} First suppose that $n$ is odd. By Lemma~\ref{ch5them31} we have that $N=\prod_{i=1}^{t}Z_{p_{i}^{k_{i}}}$ which implies that $N=H=G_{R}$. 

Suppose that $2\mid n$. Since $4\nmid n$, we have that $G_{R}=Z_{2}\times \prod_{i=1}^{t}Z_{p_{i}^{k_{i}}}$. It follows from Lemma~\ref{ch5lemcentralizer} and Lemma~\ref{ch5them31} that $N=Z_{2}\times \prod_{i=1}^{t}Z_{p_{i}^{k_{i}}}$. Thus $N=H=G_{R}$.  \end{proof}

We note that Theorem \ref{ch5coroabelian} is a generalisation of \cite[Theorem 4.3]{maruvsivc2005normal} which shows that there are no non-cyclic regular abelian subgroups of $\Aut(\Gamma)$.

\begin{guess}\label{notdivisibleby8}
The cyclic group $Z_{n}$ with $8\nmid n$ does not have the NNN-property. 
\end{guess}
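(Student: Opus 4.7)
The plan is to split according to whether $4 \mid n$. When $4 \nmid n$, Theorem~\ref{ch5coroabelian} immediately delivers the result: any cyclic regular subgroup $H \cong G$ of $\Aut(\Gamma)$, for $\Gamma$ a normal circulant for $G$, is abelian and hence coincides with $G_R$, so no NNN-graph for $G$ can exist. The genuinely new case is $n = 4m$ with $m$ odd, for which I use the decomposition $G \cong Z_4 \times Z_m$.

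In this case, I would fix a normal circulant $\Gamma = \Cay(G, S)$ and suppose for contradiction that $H \leqslant \Aut(\Gamma)$ is a cyclic regular subgroup with $H \cong G$ and $H \neq G_R$. Setting $N = G_R \cap H$, Lemma~\ref{ch5them31} forces $|G_R : N|$ to be a power of $2$, and the structural conclusion of Lemma~\ref{ch5lemcentralizer} pins $N$ down to the form $Z_{2^{m_1}} \times Z_m$ with $1 \leqslant m_1 \leqslant 2$. Since $H \neq G_R$, we must have $m_1 = 1$, so $N$ is the unique index-$2$ subgroup of $G_R$, isomorphic to $Z_{2m}$. Corollary~\ref{ch5corocentralizer} then gives $C_{\Aut(G)}(N) = \Aut(Z_4) = \langle x \rangle$, where $x$ acts as inversion on the $Z_4$-factor and trivially on $Z_m$. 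Moreover, the proof of Lemma~\ref{ch5them31} establishes $C_{\Aut(G,S)}(N) = C_{\Aut(G)}(N)$, so the image of $H$ in $\Hol(G)/G_R = \Aut(G)$ is exactly $\langle x \rangle$.

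The decisive step is the squaring computation. Any $h \in H \setminus N$ factors in $\Hol(G) = G_R \rtimes \Aut(G)$ as $h = g \cdot x$ for some $g \in G_R$, and a direct calculation using $x \cdot g = g^x \cdot x$ together with $x^2 = 1$ gives $h^2 = g \cdot g^x$. Splitting $g$ according to $G_R \cong Z_4 \times Z_m$, the action of $x$ inverts the $Z_4$-component of $g$ and fixes its $Z_m$-component, so $g \cdot g^x$ has trivial $Z_4$-component and therefore lies entirely in the $Z_m$-factor. Hence $|h^2|$ divides $m$, giving $|h| \leqslant 2m < 4m$ for every $h \in H \setminus N$. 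But $H \cong Z_{4m}$ has generators of order $4m$, and no element of order $4m$ can lie in $N$ (which has order $2m$); so any generator of $H$ must lie in $H \setminus N$, contradicting the bound $|h| \leqslant 2m$ and completing the proof.

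The main obstacle I anticipate is this final squaring step: everything before it is routine structural bookkeeping, but the contradiction depends essentially on the fact that the only nontrivial automorphism of $G$ centralising $N$ inverts the $Z_4$-factor while fixing $Z_m$. This forces squaring to kill the $Z_4$-component of any $h \in H \setminus N$, leaving an element too small in order for $H$ to be cyclic of order $4m$. The crucial inputs are the rigidity of $\Aut(Z_4)$ together with the constraint from the proof of Lemma~\ref{ch5them31} that no odd-prime component of $\Aut(G, S)$ contributes new centralising automorphisms.
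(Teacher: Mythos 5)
Your proof is correct, and for the essential case $n=4m$ with $m$ odd it takes a genuinely different route from the paper. The paper's argument is a \emph{normality} argument: it observes that $N=H\cap G_R\cong Z_{2m}$ is characteristic in $G$, hence normal in $\Hol(G)$, and that $\Hol(G)/N\cong Z_2\times Z_2\times\Aut(Z_m)$ is abelian; therefore \emph{every} abelian regular subgroup $H$ with $N\leqslant H$ is normal in $\Hol(G)$ and hence in $\Aut(\Gamma)$, so $\Gamma$ cannot be non-normal for $H$. Your argument is instead a \emph{non-existence} argument: after the same structural bookkeeping (Lemmas~\ref{ch5lemcentralizer} and~\ref{ch5them31}, Corollary~\ref{ch5corocentralizer}) pins the image of $H$ in $\Aut(G)$ down to the inversion $x$ of the $Z_4$-factor, the computation $h^2=(gg^x)_R$ kills the $Z_4$-component and caps every element of $H\setminus N$ at order $2m$, so no cyclic regular subgroup other than $G_R$ can exist in $\Aut(\Gamma)$ at all. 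The paper's route is slightly more general in that it applies verbatim to all abelian regular subgroups isomorphic to $G$ or not, while yours is sharper for the cyclic case (ruling out existence rather than merely non-normality) at the cost of using cyclicity of $H$ essentially. Both rest on the same preparatory lemmas, and both correctly dispatch the $4\nmid n$ case via Theorem~\ref{ch5coroabelian}.
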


\begin{proof} By Theorem~\ref{ch5coroabelian} we are left to consider the cases where $n=4m$ and  $m$ is odd. Let $H$ be an abelian regular subgroup of $A$ such that $H\neq G_{R}$. It follows from Lemmas~~\ref{ch5lemcentralizer} and \ref{ch5them31} that $N=H\cap G_{R}=Z_{2}\times \prod_{i=1}^{t}Z_{p_{i}^{k_{i}}}$. Since $N$ is a characteristic subgroup of $G$, we have that $N\lhd \Hol(G)$. Thus 
$$
\Hol(G)/N\cong (G_{R}/N)\rtimes \Aut(G)\cong (G_{R}/N)\rtimes (Z_{2}\times \Aut(Z_{m}))\cong Z_{2}\times Z_{2}\times \Aut(Z_{m}). 
$$
Since the quotient group is abelian, we have that $H\lhd \Hol(G)$, and so $H\lhd A$. Hence $Z_{4m}$ does not have the NNN-property and the result follows. \end{proof}

Notice that if $p_{1}=2$, then there is no permutation as defined in (\ref{ch5equ9}) such that it is an automorphism of the graph. In fact, in general Lemma~\ref{ch5lem2} is not true for $G=Z_{2^{k}}$ with $k\geqslant 3$.  In this case $\Aut(G)$ has three different elements of order $2$, and there are normal circulants for $G$ such that $\Aut(G, S)$ contains at least three different involutions  (see \cite[pg.~65]{yxuthesis2019}).
 Hence the arguments in this section are not valid for the case where $8$ divides $ |G|$.

\section{The Classification of Regular Subgroups of $\Hol(Z_{2^{n}})$}

The purpose of this section is to prove Theorem~\ref{ch6maintheo}. Let $G=\langle a\rangle\cong Z_{2^{n}}$. We begin this section with several number theoretic results. Let $m\in \mathbb{N}$. Let $p$ be a prime.  We will  write $m=m_{\bf p}m_{{\bf p}'}$ where $m_{\bf p}$ is the $p$-part of $m$ and $(m_{\bf p}, m_{{\bf p}'})=1$. Let $G^{*}$ be the multiplicative group of the integers modulo $2^{n}$. Assuming $2^{n}>5$, then clearly $5\in G^{*}$ as $(2, 5)=1$. Let $5^{-1}$ be the inverse of $5$ in $G^{*}$.  

\begin{llemma}\cite[Theorem 2.2.6]{MR2014408}\label{ch6lem00}
Let $t\in \mathbb{N}$. Then  
$$
5^{2^{t}}\equiv 1 \Mod {2^{t+2}},
$$
and 
$$
5^{2^{t}}\not\equiv 1 \Mod {2^{t+3}}.
$$

\end{llemma}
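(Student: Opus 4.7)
The plan is to prove both congruences simultaneously by induction on $t$, which is the standard approach for this kind of lifting-the-exponent statement in $(\Z/2^n\Z)^*$. More precisely, I would prove by induction on $t\geq 0$ the strengthened assertion that
\[
5^{2^{t}} = 1 + c_{t}\cdot 2^{t+2}\quad\text{for some odd integer } c_{t}.
\]
This single statement immediately implies $5^{2^{t}}\equiv 1\Mod{2^{t+2}}$ and, since $c_t$ is odd, also $5^{2^{t}}\not\equiv 1\Mod{2^{t+3}}$.

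For the base case $t=0$, we have $5^{1} = 1 + 1\cdot 2^{2}$, so $c_0=1$ is odd. For the inductive step, assume $5^{2^{t}} = 1 + c_{t}\cdot 2^{t+2}$ with $c_{t}$ odd. Then squaring gives
\[
5^{2^{t+1}} = \bigl(1 + c_{t}\cdot 2^{t+2}\bigr)^{2} = 1 + c_{t}\cdot 2^{t+3} + c_{t}^{2}\cdot 2^{2t+4}.
\]
Since $2t+4 \geq t+4$ for $t\geq 0$, I can rewrite the tail term as $2^{t+3}\cdot(c_{t}^{2}\cdot 2^{t+1})$, so
\[
5^{2^{t+1}} = 1 + 2^{t+3}\bigl(c_{t} + c_{t}^{2}\cdot 2^{t+1}\bigr).
\]
Setting $c_{t+1} = c_{t} + c_{t}^{2}\cdot 2^{t+1}$, the quantity $c_{t}^{2}\cdot 2^{t+1}$ is even (as $t+1\geq 1$), so $c_{t+1}$ has the same parity as $c_{t}$, namely odd. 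This closes the induction.

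There is essentially no obstacle here beyond carefully tracking the 2-adic valuation through the binomial expansion of the square; the key observation that makes the argument work is the inequality $2t+4\geq t+3+1$ for all $t\geq 0$, which is what forces the quadratic term in the binomial expansion to vanish modulo $2^{t+4}$ once a factor of $2^{t+3}$ has been extracted. An alternative framing is to note that this lemma is precisely the assertion that $5$ generates a cyclic subgroup of order $2^{n-2}$ inside $(\Z/2^{n}\Z)^{*}$, and that the doubling map on $2$-adic orders behaves as expected once we are past the anomaly at the prime $2$.
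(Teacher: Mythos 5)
Your proof is correct: the strengthened induction hypothesis $5^{2^{t}}=1+c_{t}\cdot 2^{t+2}$ with $c_{t}$ odd is exactly the right invariant, and the 2-adic bookkeeping in the squaring step is accurate. The paper itself gives no proof of this lemma (it is quoted from a reference), and your argument is the standard self-contained one, so there is nothing to reconcile.
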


\begin{llemma}\label{ch6lem01} 
Let 
\begin{center}
$M=\frac{1-5^{-kj}}{1-5^{-j}}$ and $L=\frac{1-5^{-kj}}{1+5^{-j}}$
\end{center} 
with $k\geqslant  1$ and $j\geqslant  1$. Then $M_{\bf 2}=k_{\bf 2}$ and $L_{\bf 2}=2k_{\bf 2}j_{\bf 2}$.
\end{llemma}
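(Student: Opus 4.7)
The plan is to work with the $2$-adic valuation $v_2$ (so that $m_{\bf 2}=2^{v_2(m)}$) and exploit the factorisations
\[
M\,(1-5^{-j}) \;=\; 1-5^{-kj} \;=\; L\,(1+5^{-j}),
\]
which reduce the task to computing $v_2(5^r\pm 1)$ for $r\in\{j,kj\}$; note that $v_2(1\pm 5^{-r})=v_2(5^r\pm 1)$ since $5^{-1}$ is a $2$-adic unit.

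The heart of the proof is the formula $v_2(5^r-1)=v_2(r)+2$, valid for every $r\geqslant 1$. To establish it, I would write $r=2^t r_0$ with $r_0$ odd and factor
\[
5^r-1 \;=\; (5^{2^t}-1)\sum_{i=0}^{r_0-1}5^{2^t i}.
\]
The sum consists of $r_0$ odd summands and $r_0$ is odd, so the sum is itself odd and contributes no power of $2$; Lemma~\ref{ch6lem00} gives $v_2(5^{2^t}-1)=t+2$, so $v_2(5^r-1)=t+2=v_2(r)+2$. Applying this with $r=j$ and $r=kj$ yields $v_2(1-5^{-j})=v_2(j)+2$ and $v_2(1-5^{-kj})=v_2(k)+v_2(j)+2$.

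For the companion quantity, $5\equiv 1\pmod{4}$ forces $5^j\equiv 1\pmod{4}$ and hence $v_2(5^j+1)=1$. Substituting into the two factorisations above gives
\[
v_2(M)=v_2(1-5^{-kj})-v_2(1-5^{-j})=v_2(k),
\]
\[
v_2(L)=v_2(1-5^{-kj})-v_2(1+5^{-j})=v_2(k)+v_2(j)+1,
\]
so $M_{\bf 2}=k_{\bf 2}$ and $L_{\bf 2}=2k_{\bf 2}j_{\bf 2}$, as required. The only real subtlety is the factorisation argument used to lift Lemma~\ref{ch6lem00} from $r=2^t$ to arbitrary $r$; everything else is a routine manipulation of valuations.
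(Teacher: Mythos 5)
Your proof is correct, and it shares the paper's opening move -- clearing the powers of $5$ from numerator and denominator so that everything reduces to the $2$-parts of $5^{kj}-1$, $5^{j}-1$ and $5^{j}+1$ -- but the key computation is carried out quite differently. The paper sets $q=5^{j}$ and invokes a cited external result (\cite[Lemma 3.4]{MR2488141}) on $(q^{k}-1)_{\bf 2}$, which forces a case split on the parity of $k$ and then on the parity of $j$; the statement $L_{\bf 2}=2k_{\bf 2}j_{\bf 2}$ emerges only after reconciling the two cases $(q-1)_{\bf 2}=4$ and $(q-1)_{\bf 2}=4j_{\bf 2}$. You instead prove the uniform valuation formula $v_2(5^{r}-1)=v_2(r)+2$ directly from Lemma~\ref{ch6lem00} via the factorisation $5^{r}-1=(5^{2^{t}}-1)\sum_{i=0}^{r_0-1}5^{2^{t}i}$ with an odd cofactor, after which both claims follow by subtracting valuations with no case analysis. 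Your route is self-contained (it does not import the external lemma), handles all parities at once, and the $2$-adic framing also deals cleanly with the fact that $M$ and $L$ need not be ordinary integers (e.g.\ $L$ for odd $k$), a point both arguments otherwise leave implicit; what the paper's version buys is only that it outsources the lifting step to a reference. The one step worth double-checking in your write-up -- that the sum of $r_0$ odd summands is odd because $r_0$ is odd -- is stated and is correct, so there is no gap.
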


\begin{proof} 
Notice that $M=\frac{5^{kj}-1}{5^{kj-j}(5^{j}-1)}$ as we may multiply the numerator and the denominator of $M$ by $5^{kj}$. Thus $M_{2}=\left(\frac{5^{kj}-1}{5^{kj-j}(5^{j}-1)}\right)_{2}=\left(\frac{5^{kj}-1}{5^{j}-1}\right)_{2}$. Similarly, we have that $L_{2}=\left(\frac{5^{kj}-1}{5^{j}+1}\right)_{2}$.

Let $q=5^{j}$. Thus by \cite[Lemma 3.4]{MR2488141} we have that if $k$ is odd, then $M_{2}=1=k_{2}$. If $k$ is even, then 
$$
(q^{k}-1)_{2}=(q^{2}-1)_{2}k_{2}/2=(q-1)_{2}[(q+1)_{2}k_{2}/2]=(q-1)_{2}k_{2}
$$
as $(q+1)_{2}=2$. Thus $M_{2}=k_{2}$. Similarly, 
$$
(q^{k}-1)_{2}=(q^{2}-1)_{2}k_{2}/2=(q+1)_{2}[(q-1)_{2}k_{2}/2].
$$
If $j$ is odd, then $(q-1)_{2}=(5-1)_{2}=4$, and so $L_{2}=2k_{2}$. If $j$ is even, then 
$$
(q-1)_{2}=(5^{2}-1)_{2}j_{2}/2=4j_{2},
$$
which implies that $L_{2}=2k_{2}j_{2}$, and so  completes the proof. 
\end{proof}

\subsection{Preliminaries}
We start with presenting some basic properties of the elements in $H=\Hol(G)=G_R\rtimes \Aut(G)$. For $g\in G$, we also use $g$ to represent the corresponding element of $G_R$. Recall that  $\Aut(G)=\langle x\rangle \times \langle y\rangle$ where $x: a\to a^{-1}$ and $y: a\to a^{5}$.

\begin{llemma}\label{ch6lem11}
Let $h=a^{\alpha}x^{\beta}y^{\gamma}\in H$ where $0\leqslant  \alpha\leqslant  2^{n}-1$, $0\leqslant  \beta\leqslant  1$ and $0\leqslant  \gamma\leqslant  2^{n-2}-1$. 
\begin{enumerate}
\item If $h=a^{\alpha}y^{\beta}$, then $$h^{r}=a^{\alpha\left(\frac{1-5^{-r\gamma}}{1-5^{-\gamma}}\right)}y^{r\gamma}.$$

\item If $h=a^{\alpha}x$, then $h$ is an involution.

\item If $h=a^{\alpha}xy^{\gamma}$ with $\gamma\neq 0$, then 
\begin{equation*}
h^{r}= \begin{cases}
               a^{\alpha\sum\limits_{s=0}^{r-1}(-1)^{s}5^{-s\gamma}}xy^{r\gamma}              &  \textit{if $r$ is odd}, \\
               a^{\alpha\left(\frac{1-5^{-r\gamma}}{1+5^{-\gamma}}\right)}y^{r\gamma}               & \textit{if $r$ is even}.
               \end{cases}
\end{equation*}
\end{enumerate}
\end{llemma}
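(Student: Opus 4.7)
The plan is to establish the commutation rules in $H=G_R\rtimes\Aut(G)$ and then induct on $r$. Because conjugation in the semidirect product is just the natural action of $\Aut(G)$ on $G_R$, working through the regular representation yields
\[
y^{\gamma}a^{\alpha}=a^{\alpha\cdot 5^{-\gamma}}y^{\gamma},\qquad xa^{\alpha}=a^{-\alpha}x,
\]
and since $x,y$ commute in $\Aut(G)$ these fuse into
\[
x^{\beta}y^{\gamma}a^{\alpha}=a^{(-1)^{\beta}\,5^{-\gamma}\alpha}\,x^{\beta}y^{\gamma}.
\]
Part~(2) is then immediate: $(a^{\alpha}x)^{2}=a^{\alpha}\cdot xa^{\alpha}\cdot x=a^{\alpha}a^{-\alpha}x^{2}=1$.

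For part~(1), with $h=a^{\alpha}y^{\gamma}$, I would induct on $r$ to show $h^{r}=a^{\alpha T_{r}}y^{r\gamma}$ where $T_{r}=\sum_{s=0}^{r-1}5^{-s\gamma}$. The step $h^{r+1}=a^{\alpha T_{r}}y^{r\gamma}\cdot a^{\alpha}y^{\gamma}=a^{\alpha(T_{r}+5^{-r\gamma})}y^{(r+1)\gamma}$ uses only the first commutation rule, and summing the geometric progression gives the closed form $T_{r}=(1-5^{-r\gamma})/(1-5^{-\gamma})$.

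For part~(3), with $h=a^{\alpha}xy^{\gamma}$ and $\gamma\ne 0$, the factor $x$ appears in $h^{r}$ precisely when $r$ is odd, so I would run a single induction that toggles between the two stated formulas. In the odd-to-even step, if $h^{r}=a^{\alpha V_{r}}xy^{r\gamma}$ with $V_{r}=\sum_{s=0}^{r-1}(-1)^{s}5^{-s\gamma}$, then the combined commutation rule (with its sign flip from $x$) gives $xy^{r\gamma}a^{\alpha}=a^{-\alpha 5^{-r\gamma}}xy^{r\gamma}$, and the identity $xy^{r\gamma}x=y^{r\gamma}$ collapses the two $x$'s to yield $h^{r+1}=a^{\alpha(V_{r}-5^{-r\gamma})}y^{(r+1)\gamma}$. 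A mirror-image calculation (where the first commutation rule alone suffices because there is no $x$ to move) handles the even-to-odd step. The two closed forms are reconciled via the alternating-geometric identity
\[
\sum_{s=0}^{r-1}(-1)^{s}5^{-s\gamma}=\frac{1-(-1)^{r}5^{-r\gamma}}{1+5^{-\gamma}},
\]
which simplifies to $(1-5^{-r\gamma})/(1+5^{-\gamma})$ precisely when $r$ is even.

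The only real subtlety is the $(-1)^{\beta}$ sign produced whenever $x$ moves past a power of $a$: this is exactly what forces the alternating sum in part~(3) and hence the parity split in its statement. Beyond this sign bookkeeping, the argument reduces to routine manipulation of geometric series, and I do not anticipate any serious obstacle.
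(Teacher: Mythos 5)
Your proposal is correct and follows essentially the same route as the paper: both derive the commutation relations $y^{\gamma}a^{\alpha}=a^{\alpha 5^{-\gamma}}y^{\gamma}$ and $xa^{\alpha}=a^{-\alpha}x$ in the semidirect product, induct on $r$ (with the parity toggle in part (3)), and close the resulting geometric or alternating-geometric sum. No gaps.
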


\begin{proof}  
\begin{enumerate}
\item Suppose that  $h=a^{\alpha}y^{\gamma}$. Then 
\begin{equation*}
h^{2}=a^{\alpha}y^{\gamma}a^{\alpha}y^{\gamma}=a^{\alpha}(a^{\alpha})^{y^{-\gamma}}y^{2\gamma}=a^{\alpha}a^{\alpha5^{-\gamma}}y^{2\gamma}=a^{\alpha(1+5^{-\gamma})}y^{2\gamma}
\end{equation*}
Suppose that
\begin{equation*}
h^{r}=a^{\alpha\sum\limits_{s=0}^{r-1}5^{-s\gamma}}y^{r\gamma}
\end{equation*}
holds for $r\leqslant  \ell$. Let $r=\ell+1$, and so 
\begin{align*}
h^{r+1} = & a^{\alpha\sum\limits_{s=0}^{r-1}5^{-s\gamma}}y^{r\gamma}a^{\alpha}y^{\gamma}   \\   
   = & a^{\alpha\sum\limits_{s=0}^{r-1}5^{-s\gamma}}(a^{\alpha})^{y^{-r\gamma}}y^{(r+1)\gamma}\\
   = & a^{\alpha\sum\limits_{s=0}^{r-1}5^{-s\gamma}}a^{\alpha5^{-r\gamma}}y^{(r+1)\gamma} \\
   = & a^{\alpha\sum\limits_{s=0}^{r}5^{-s\gamma}}y^{(r+1)\gamma}.
\end{align*}
Thus we have that
\begin{equation*}
h^{r}=a^{\alpha\sum\limits_{s=0}^{r-1}5^{-s\gamma}}y^{r\gamma}.
\end{equation*}
Since 
\begin{center}
$\sum\limits_{s=0}^{r-1}5^{-s\gamma}=\frac{1-5^{-r\gamma}}{1-5^{-\gamma}}$,
\end{center}
we have that 
\begin{equation}\label{ch6equ1}
h^{r}=a^{\alpha\left(\frac{1-5^{-r\gamma}}{1-5^{-\gamma}}\right)}y^{r\gamma}.
\end{equation}

\item Suppose that  $h=a^{\alpha}x$. Thus $h^{2}=a^{\alpha}xa^{\alpha}x=a^{\alpha}a^{-\alpha}xx=1$. Thus $a^{\alpha}x$ is an involution in $H$.

\item Suppose that  $h=a^{\alpha}xy^{\gamma}$. Thus 
\begin{align*}
h^{2}=a^{\alpha}xy^{\gamma}a^{\alpha}xy^{\gamma} = & a^{\alpha}x(a^{\alpha})^{y^{-\gamma}}xy^{2\gamma} \\
= & a^{\alpha}xa^{\alpha5^{-\gamma}}xy^{2\gamma} \\
= & a^{\alpha}a^{-\alpha5^{-\gamma}}y^{2\gamma}\\
= & a^{\alpha(1-5^{-\gamma})}y^{2\gamma},
\end{align*}
and 
\begin{align*}
h^{3} =a^{\alpha(1-5^{-\gamma})}y^{2\gamma}a^{\alpha}xy^{\gamma} & =a^{\alpha(1-5^{-\gamma})}(a^{\alpha})^{y^{-2\gamma}}xy^{3\gamma}\\
         & =a^{\alpha(1-5^{-\gamma})}a^{\alpha5^{-2\gamma}}xy^{3\gamma} \\
         & =a^{\alpha(1-5^{-\gamma}+5^{-2\gamma})}xy^{3\gamma}.
\end{align*}
Suppose that  when $r\leqslant  2\ell$ we have that 
\begin{equation*}
h^{r}=a^{\alpha\sum\limits_{s=0}^{r-1}(-1)^{s}5^{-s\gamma}}y^{r\gamma}
\end{equation*} 
for $r$ even, and 
\begin{equation*}
h^{r}=a^{\alpha\sum\limits_{s=0}^{r-1}(-1)^{s}5^{-s\gamma}}xy^{r\gamma}
\end{equation*} 
for $r$ odd. Now we check $r=2\ell+1$ and $r=2(\ell+1)$. If $r=2\ell+1$, then 
\begin{align*}
h^{r}=h^{r-1}a^{\alpha}xy^{\gamma} & =a^{\alpha\sum\limits_{s=0}^{r-2}(-1)^{s}5^{-s\gamma}}y^{(r-1)\gamma}a^{\alpha}xy^{\gamma}\\                          & =a^{\alpha\sum\limits_{s=0}^{r-2}(-1)^{s}5^{-s\gamma}}a^{\alpha5^{-(r-1)\gamma}}xy^{r\gamma} \\
& =a^{\alpha\sum\limits_{s=0}^{r-1}(-1)^{s}5^{-s\gamma}}xy^{r\gamma}.
\end{align*}
If $r=2(\ell+1)$, then 
\begin{align*}
h^{r}=h^{r-2}h^{2} & =a^{\alpha\sum\limits_{s=0}^{r-3}(-1)^{s}5^{-s\gamma}}y^{(r-2)\gamma}a^{\alpha(1-5^{-\gamma})}y^{2\gamma}\\                    & =a^{\alpha\sum\limits_{s=0}^{r-3}(-1)^{s}5^{-s\gamma}}a^{[\alpha(1-5^{-\gamma})]5^{-(r-2)\gamma}}y^{r\gamma}\\
& =a^{\alpha\sum\limits_{s=0}^{r-1}(-1)^{s}5^{-s\gamma}}y^{r\gamma}. 
\end{align*}
If $r$ is even, then 
\begin{center}
$\sum\limits_{s=0}^{r-1}(-1)^{s}5^{-s\gamma}=\frac{1-5^{-r\gamma}}{1+5^{-\gamma}}$
\end{center}
Hence we have that 
\begin{equation}\label{ch6equ2}
h^{r}=a^{\alpha\left(\frac{1-5^{-r\gamma}}{1+5^{-\gamma}}\right)}y^{r\gamma},
\end{equation}
when $r$ is even, and 
\begin{equation}\label{ch6equ3}
h^{r}=a^{\alpha\sum\limits_{s=0}^{r-1}(-1)^{s}5^{-s\gamma}}xy^{r\gamma}
\end{equation}
when $r$ is odd. \qedhere
\end{enumerate}
\end{proof}

\begin{llemma}\label{ch6lem12}
Let $h\in H$, and suppose that  $0\leqslant  \alpha\leqslant  2^{n}-1$, $0\leqslant  \beta\leqslant  1$ and $0\leqslant  \gamma\leqslant  2^{n-2}-1$. 
\begin{enumerate}
\item If $h=a^{\alpha}y^{\gamma}$, then $|h|=max(\frac{2^{n-2}}{\gamma_{\bf 2}}, \frac{2^{n}}{\alpha_{\bf 2}})$;
\item If $h=a^{\alpha}xy^{\gamma}$ with $\gamma\neq 0$, then $|h|=\frac{2^{n-1}}{\gamma_{\bf 2}(\alpha, 2)}$.
\end{enumerate}
\end{llemma}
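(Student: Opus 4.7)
The plan is to derive both order formulas directly from the power expressions in Lemma~\ref{ch6lem11}, combined with the 2-adic valuation estimates of Lemma~\ref{ch6lem01}.

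For part (1), Lemma~\ref{ch6lem11}(1) gives $h^{r} = a^{\alpha M}y^{r\gamma}$ with $M = (1-5^{-r\gamma})/(1-5^{-\gamma})$. Hence $h^{r}=1$ if and only if $y^{r\gamma}=1$ in $\langle y\rangle$ and $a^{\alpha M}=1$ in $G$. Since $|y|=2^{n-2}$ is a power of two, the first equation translates to $r_{\bf 2} \geqslant 2^{n-2}/\gamma_{\bf 2}$. For the second, I would apply Lemma~\ref{ch6lem01} with $k=r$ and $j=\gamma$ to obtain $M_{\bf 2}=r_{\bf 2}$, so that $2^{n} \mid \alpha M$ becomes $\alpha_{\bf 2}\,r_{\bf 2} \geqslant 2^{n}$, i.e.\ $r_{\bf 2} \geqslant 2^{n}/\alpha_{\bf 2}$. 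Because both lower bounds on $r_{\bf 2}$ are powers of two, the smallest positive $r$ satisfying both is simply the maximum of the two bounds (with trivial odd part), which is exactly the stated formula.

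For part (2), Lemma~\ref{ch6lem11}(3) shows that $h^{r}$ retains the factor $x$ whenever $r$ is odd, so $h^{r}\neq 1$ for odd $r$ and $|h|$ must be even. For even $r$, $h^{r}=a^{\alpha L}y^{r\gamma}$ with $L=(1-5^{-r\gamma})/(1+5^{-\gamma})$. As before, $y^{r\gamma}=1$ forces $r_{\bf 2}\geqslant 2^{n-2}/\gamma_{\bf 2}$, while Lemma~\ref{ch6lem01} now yields $L_{\bf 2}=2r_{\bf 2}\gamma_{\bf 2}$, so that $a^{\alpha L}=1$ forces $r_{\bf 2}\geqslant 2^{n-1}/(\alpha_{\bf 2}\gamma_{\bf 2})$. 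The ratio of these two lower bounds is $2/\alpha_{\bf 2}$, so the second dominates precisely when $\alpha$ is odd (i.e.\ $\alpha_{\bf 2}=1$), giving $|h|=2^{n-1}/\gamma_{\bf 2}$; and the first dominates when $\alpha$ is even, giving $|h|=2^{n-2}/\gamma_{\bf 2}$. The two regimes collapse into the single expression $|h|=2^{n-1}/(\gamma_{\bf 2}(\alpha,2))$.

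The main obstacle is bookkeeping rather than substance: one has to identify which of the two lower bounds on $r_{\bf 2}$ dominates in part (2) according to the parity of $\alpha$, and then verify that the single formula involving $(\alpha,2)$ unifies both parity cases. A secondary point is the convention for the degenerate values $\alpha=0$ and $\gamma=0$ in part (1): taking $\alpha_{\bf 2}=2^{n}$ and $\gamma_{\bf 2}=2^{n-2}$ in these cases makes the maximum collapse to $|y^{\gamma}|=2^{n-2}/\gamma_{\bf 2}$ and $|a^{\alpha}|=2^{n}/\alpha_{\bf 2}$ respectively, as expected.
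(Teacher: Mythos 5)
Your argument is correct and follows essentially the same route as the paper: both derive the two divisibility conditions $2^{n-2}\mid r\gamma$ and $2^{n}\mid \alpha M$ (resp.\ $\alpha L$) from Lemma~\ref{ch6lem11}, apply Lemma~\ref{ch6lem01} to compute $M_{\bf 2}=r_{\bf 2}$ and $L_{\bf 2}=2r_{\bf 2}\gamma_{\bf 2}$, and then take the dominant lower bound on $r_{\bf 2}$, splitting on the parity of $\alpha$ in part (2). The only cosmetic difference is that the paper justifies $r$ being a power of $2$ via $|H|=2^{2n-1}$, whereas you observe it directly from the fact that both lower bounds constrain only $r_{\bf 2}$.
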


\begin{proof} {\bf (Case 1) }Suppose that  $h=a^{\alpha}y^{\gamma}$ and $h^{r}=1$ for some $r$. Since $|H|=2^{2n-1}$, we have that $r=2^{t}=r_{\bf2}$ for some $1\leqslant  t\leqslant  2n-1$. By Lemma~\ref{ch6lem11}(1) we have that $2^{n-2}\bigm| r\gamma$ and   $2^{n}\bigm| \alpha\left(\frac{1-5^{r\gamma}}{1-5^{\gamma}}\right)$. By Lemma~\ref{ch6lem01} we have that $\left(\frac{1-5^{r\gamma}}{1-5^{\gamma}}\right)_{\bf 2}=r_{\bf 2}=r$. Thus $2^{n-2}\bigm| r\gamma$ and $2^{n}\bigm| \alpha_{\bf 2}r$. Hence $max(\frac{2^{n-2}}{\gamma_{\bf 2}}, \frac{2^{n}}{\alpha_{\bf 2}})\mid r$. This implies that $|h|=max(\frac{2^{n-2}}{\gamma_{\bf 2}}, \frac{2^{n}}{\alpha_{\bf 2}})$, in particular, if $\alpha$ is odd, then $|h|=2^{n}$. 

{\bf (Case 2)} Suppose that  $h=a^{\alpha}xy^{\gamma}$ where $1\leqslant  \gamma\leqslant  2^{n-2}-1$, and $h^{r}=1$. Similarly we have  $r=2^{t}$ for some $1\leqslant  t\leqslant  2n-1$. Since $r$ is even, by Lemma~\ref{ch6lem11}(3)  we have that $h^{r}=a^{\alpha\left(\frac{1-5^{-r\gamma}}{1+5^{-\gamma}}\right)}y^{r\gamma}$, and so $2^{n}\bigm| \alpha\left(\frac{1-5^{-r\gamma}}{1+5^{-\gamma}}\right)$ and $2^{n-2}\bigm| r\gamma$. By Lemma~\ref{ch6lem01}, we have that $\left(\frac{1-5^{-r\gamma}}{1+5^{-\gamma}} \right)_{\bf 2}=2r_{\bf 2}\gamma_{\bf 2}$. Thus $2^{n}\bigm| 2r_{\bf 2}\gamma_{\bf 2}\alpha_{\bf 2}$, and so we have that 
\begin{equation}\label{ch6equlem134}
2^{n-1}\bigm| r_{\bf 2}\gamma_{\bf 2}\alpha_{\bf 2}
\end{equation}
and since $2^{n-2}\bigm| r\gamma$, we have
\begin{equation}\label{ch6equlem135}
2^{n-2}\bigm| r_{\bf 2}\gamma_{\bf 2}
\end{equation}
\begin{enumerate}
\item If $\alpha$ is odd, then (\ref{ch6equlem134}) implies that $2^{n-1}\bigm| r_{\bf 2}\gamma_{\bf 2}$, and so $|h|=\frac{2^{n-1}}{\gamma_{\bf 2}}$.

\item If $\alpha$ is even, then (\ref{ch6equlem135}) implies (\ref{ch6equlem134}). Thus we may conclude that $|h|=\frac{2^{n-2}}{\gamma_{\bf 2}}$. \qedhere
\end{enumerate} 
\end{proof}

\begin{llemma}\label{ch6lem13}
Let $h\in H$, and suppose that  $h=a^{\alpha}x^{\beta}y^{\gamma}$ with $0\leqslant  \alpha\leqslant  2^{n}-1$, $0\leqslant  \beta\leqslant  1$ and $0\leqslant  \gamma\leqslant  2^{n-2}-1$. Then $h$ is conjugate in $H$ to $a^{\alpha_{\bf 2}}x^{\beta}y^{\gamma}$. 
\end{llemma}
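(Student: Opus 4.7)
The plan is to conjugate $h$ only by elements of the subgroup $\Aut(G)\leqslant H$. Since $\Aut(G)$ is abelian, it commutes with $x^\beta y^\gamma \in \Aut(G)$, so for any $z \in \Aut(G)$ we have
$$
z^{-1} h z \;=\; (z^{-1} a^\alpha z)\cdot x^\beta y^\gamma,
$$
and it suffices to find $z\in \Aut(G)$ with $z^{-1} a^\alpha z = a^{\alpha_{\bf 2}}$. (The degenerate case $\alpha=0$ is immediate, so assume $\alpha\neq 0$.)

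Writing $z = x^\ell y^m$ with $\ell \in \{0,1\}$ and $0\leqslant m\leqslant 2^{n-2}-1$, I would use the conjugation relation $y^\gamma a^\alpha = a^{5^{-\gamma}\alpha}y^\gamma$ that is implicit in the proof of Lemma~\ref{ch6lem11} (equivalently $y^{-1}ay = a^{5}$), together with $x^{-1}ax = a^{-1}$. A short direct calculation then gives
$$
z^{-1}a^\alpha z \;=\; a^{(-1)^\ell 5^m\,\alpha}.
$$
Writing $\alpha = \alpha_{\bf 2}\alpha_{{\bf 2}'}$ with $\alpha_{{\bf 2}'}$ odd, the target identity $(-1)^\ell 5^m\alpha\equiv \alpha_{\bf 2}\Mod{2^n}$ becomes
$$
(-1)^\ell 5^m \alpha_{{\bf 2}'}\equiv 1 \Mod{2^n/\alpha_{\bf 2}}.
$$

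Finally, since $\alpha_{{\bf 2}'}$ is odd, $\alpha_{{\bf 2}'}^{-1}$ is a unit modulo $2^n/\alpha_{\bf 2}$. The classical decomposition $(\Z/2^r\Z)^\ast = \langle -1\rangle\times\langle 5\rangle$ for $r\geqslant 3$, combined with surjectivity of the reduction $(\Z/2^n\Z)^\ast\twoheadrightarrow (\Z/(2^n/\alpha_{\bf 2})\Z)^\ast$, guarantees that $\alpha_{{\bf 2}'}^{-1}$ is realised by some $(-1)^\ell 5^m$; the small cases $2^n/\alpha_{\bf 2}\leqslant 4$ are immediate. The main (mild) point of care is simply keeping the conjugation conventions of Lemma~\ref{ch6lem11} consistent throughout; once this is settled, the argument reduces to a single conjugation by an element of $\Aut(G)$ and no further obstacle arises.
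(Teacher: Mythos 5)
Your proposal is correct and follows essentially the same route as the paper: conjugate by an element of the abelian subgroup $\Aut(G)$, which leaves the $x^{\beta}y^{\gamma}$ factor untouched and rescales the exponent of $a$ by a unit, so that the odd part $\alpha_{{\bf 2}'}$ can be cancelled. The paper simply names the required automorphism directly as $\rho\colon a\mapsto a^{\alpha_{{\bf 2}'}}$ (which exists because $\alpha_{{\bf 2}'}$ is odd), whereas you re-derive its existence in the form $x^{\ell}y^{m}$ via $(\Z/2^{n}\Z)^{\ast}=\langle -1\rangle\times\langle 5\rangle$; the two are the same argument.
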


\begin{proof} Write $\alpha=\alpha_{\bf 2}\alpha_{{\bf2}'}$. Then there exists $\rho\in \Aut(G)$ such that $a^{\rho}=a^{\alpha_{{\bf2}'}}$ as $\alpha_{{\bf2}'}$ is odd. Since $\Aut(G)=\langle x, y\rangle$ is abelian, we have that 
\begin{equation*}
\rho h\rho^{-1}=(\rho a^{\alpha_{\bf 2}\alpha_{{\bf2}'}}\rho^{-1})x^{\beta}y^{\gamma}=(a^{\alpha_{\bf 2}\alpha_{{\bf2}'}})^{\rho^{-1}}x^{\beta}y^{\gamma}=a^{\alpha_{\bf 2}}x^{\beta}y^{\gamma},   
\end{equation*}
and the result follows. \end{proof}

Now we assemble some results that enable us to write subgroups of $\Hol(G)$ in a convenient way.

\begin{llemma}\label{ch6lem21}
Let $1\leqslant  \alpha, \alpha'\leqslant  2^{n}-1$ and $1\leqslant  \gamma, \gamma'\leqslant  2^{n-2}-1$, and $L=\langle a^{\alpha}y^{\gamma}, a^{\alpha'}xy^{\gamma'}\rangle$. Then up to conjugacy, for some $\xi$, one of the following holds:
\begin{enumerate}
\item $L=\langle  a^{\xi}x, a^{\alpha}y^{\gamma}\rangle$,
\item $L=\langle a^{\xi}, a^{\alpha'}xy^{\gamma'}\rangle$.
\end{enumerate}

\end{llemma}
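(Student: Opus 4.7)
The plan is to produce, as a short word in the two generators $g_1 = a^\alpha y^\gamma$ and $g_2 = a^{\alpha'} xy^{\gamma'}$, either an element of the form $a^\xi x$ (which together with $g_1$ will recover $L$, giving case~1) or an element of the form $a^\xi$ (which together with $g_2$ will recover $L$, giving case~2). The dichotomy between the two cases is governed by the comparison of 2-parts $\gamma_{\bf 2}$ and $\gamma'_{\bf 2}$. The starting ingredients I will need are Lemma~\ref{ch6lem11}(1) (for powers of $a^\alpha y^\gamma$), Lemma~\ref{ch6lem11}(3) (which gives $g_2^2 = a^{\alpha'(1-5^{-\gamma'})} y^{2\gamma'}$, so $g_2^2$ lives in the ``no-$x$'' part of $H$), and the basic commutation $y^k a^m = a^{m\cdot 5^{-k}} y^k$ that lets me collect the $a$-exponent to the left in any product.

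Case A: $\gamma_{\bf 2} \leq \gamma'_{\bf 2}$. Since $\gamma_{\bf 2} = \gcd(\gamma, 2^{n-2})$ divides $\gamma'_{\bf 2}$, and hence $\gamma'$, the linear congruence $i\gamma + \gamma' \equiv 0 \pmod{2^{n-2}}$ has an integer solution $i$. Expanding $g_1^i$ by Lemma~\ref{ch6lem11}(1), sliding the resulting $y^{i\gamma}$ past $a^{\alpha'}$, and using that $x$ and $y$ commute in $\Aut(G)$, a routine computation gives
\[
g_1^i g_2 = a^{\xi} x y^{i\gamma + \gamma'} = a^{\xi} x
\]
for a specific $\xi$ depending on $\alpha, \alpha', \gamma, \gamma', i$. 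Consequently $a^{\xi}x \in L$, and from $g_2 = g_1^{-i}(a^{\xi}x)$ we obtain $L = \langle a^\alpha y^\gamma,\, a^{\xi}x\rangle$, which is form~(1).

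Case B: $\gamma_{\bf 2} > \gamma'_{\bf 2}$. Since both are powers of $2$, we have $2\gamma'_{\bf 2} \mid \gamma_{\bf 2} \mid \gamma$, and one can check $\gcd(2\gamma', 2^{n-2}) = 2\gamma'_{\bf 2}$ under the given range of $\gamma'$. Hence $2j\gamma' + \gamma \equiv 0 \pmod{2^{n-2}}$ is solvable for some $j$. Writing $g_2^2 = a^\mu y^{2\gamma'}$ and applying Lemma~\ref{ch6lem11}(1) again to get $(g_2^2)^j = a^\nu y^{2j\gamma'}$, the product $(g_2^2)^j g_1$ kills both the $x$-part (since $g_2^2$ has none) and, by choice of $j$, the $y$-part, so
\[
(g_2^2)^j g_1 = a^{\xi}
\]
for some $\xi$. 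Thus $a^{\xi}\in L$, and from $g_1 = g_2^{-2j}(a^{\xi})$ we get $L = \langle a^{\xi},\, a^{\alpha'}xy^{\gamma'}\rangle$, which is form~(2).

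The principal obstacle is bookkeeping rather than insight: one has to propagate the expressions of Lemma~\ref{ch6lem11}(1)(3) through the two products while correctly tracking the $a$-exponents under the twist $y^k a^m = a^{m\cdot 5^{-k}} y^k$, and one must verify in Case~B that $2\gamma'_{\bf 2} \le 2^{n-3}$ so that $\gcd(2\gamma', 2^{n-2}) = 2\gamma'_{\bf 2}$ and the congruence for $j$ really is solvable (this uses $\gamma < 2^{n-2}$). The ``up to conjugacy'' qualifier is not needed for the dichotomy itself; it is presumably kept so that later one may further normalise $\xi$ via Lemma~\ref{ch6lem13}, replacing $a^{\xi}$ by $a^{\xi_{\bf 2}}$ after conjugating by a suitable element of $\Aut(G)$.
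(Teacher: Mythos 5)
Your proposal is correct and follows essentially the same route as the paper: the dichotomy $\gamma_{\bf 2}\leqslant\gamma'_{\bf 2}$ versus $\gamma_{\bf 2}>\gamma'_{\bf 2}$ is exactly the paper's case split $\langle y^{\gamma}\rangle\geqslant\langle y^{\gamma'}\rangle$ versus $\langle y^{\gamma}\rangle<\langle y^{\gamma'}\rangle$, and your words $g_1^{i}g_2$ and $(g_2^{2})^{j}g_1$ are (up to sign of the exponent) the same elements $u^{-1}(a^{\alpha'}xy^{\gamma'})$ and $(a^{\alpha}y^{\gamma})u^{-1}$ that the paper computes via Lemma~\ref{ch6lem11}. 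Your side remarks (solvability of the congruences from the $2$-part comparison, and that the ``up to conjugacy'' qualifier is not actually needed for the dichotomy) are also accurate.
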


\begin{proof} Since $\langle y\rangle\cong Z_{2^{n-2}}$, we have that either $\langle y^{\gamma}\rangle\geqslant \langle y^{\gamma'}\rangle$, or $\langle y^{\gamma}\rangle<\langle y^{\gamma'}\rangle$. 

Suppose that  $\langle y^{\gamma}\rangle\geqslant \langle y^{\gamma'}\rangle$. Thus there exists some $1\leqslant t\leqslant2^{n-2}-1$ such that $y^{\gamma'}=(y^{\gamma})^{t}$. Let $u=(a^{\alpha}y^{\gamma})^{t}$, and so by Lemma~\ref{ch6lem11}, 
\begin{equation*}
u=a^{M}y^{\gamma\cdot t}=a^{M}y^{\gamma'}
\end{equation*}
with $M=\alpha\cdot\frac{1-5^{-\gamma\cdot t}}{1-5^{-\gamma}}$. Thus 
\begin{align*}
u^{-1}(a^{\alpha'}xy^{\gamma'}) & = a^{M'}y^{-\gamma'}a^{\alpha'}xy^{\gamma'} \\
             & = a^{M'}(a^{\alpha'})^{y^{\gamma'}}x \\
             & = a^{M'}a^{M''}x
\end{align*}
with $M''=\alpha'\cdot 5^{\gamma'}$. Let $\xi=M'+M''$, and so $L=\langle a^{\xi}x, a^{\alpha}y^{\gamma}\rangle$. 

Now suppose that $\langle y^{\gamma}\rangle<\langle y^{\gamma'}\rangle$. Thus $\langle y^{\gamma}\rangle\leqslant\langle y^{2\gamma'}\rangle$, that is, there exists some $1\leqslant t\leqslant2^{n-2}-1$ such that $y^{\gamma}=(y^{2\gamma'})^{t}$. Since $x^{2}={\bf1}$ and $x, y$ commute, we have that 
\begin{equation*}
y^{\gamma}=(y^{2\gamma'})^{t}=((xy^{\gamma'})^{2})^{t}=(xy^{\gamma'})^{2t}.
\end{equation*}
Let $u=(a^{\alpha'}xy^{\gamma'})^{2t}$, and so by Lemma~\ref{ch6lem11}(3) we have that $u=a^{M}y^{\gamma}$ with $M=\alpha'\cdot \frac{1-5^{-\gamma'\cdot2t}}{1+5^{-\gamma'}}$. Thus
\begin{align*}
a^{\alpha}y^{\gamma}u^{-1} & = a^{\alpha}y^{\gamma}y^{-\gamma}a^{-M}\\
                & = a^{\alpha-M}.
\end{align*}
Let $\xi=\alpha-M$, and so we have that $L=\langle a^{\xi}, a^{\alpha'}xy^{\gamma'}\rangle$. \end{proof}

\begin{llemma}\label{ch6lem22}
Let $1\leqslant  \alpha, \alpha'\leqslant  2^{n}-1$ and $1\leqslant  \gamma\leqslant  2^{n-2}-1$, and $L=\langle a^{\alpha}x, a^{\alpha'}xy^{\gamma}\rangle$. Then $L=\langle a^{\alpha-\alpha'}y^{\gamma}, a^{\alpha'}xy^{\gamma}\rangle$.
\end{llemma}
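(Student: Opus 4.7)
The plan is to prove the two subgroups are equal by showing each pair of generators is contained in the other subgroup, and each containment follows from a single one-line product computation that exploits the fact that $x$ is an involution with $a^x = a^{-1}$.

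First I would establish the forward containment $\langle a^{\alpha-\alpha'}y^{\gamma}, a^{\alpha'}xy^{\gamma}\rangle \leqslant L$. It suffices to exhibit $a^{\alpha-\alpha'}y^{\gamma}$ as a word in $a^{\alpha}x$ and $a^{\alpha'}xy^{\gamma}$. Using $(a^{\alpha}x)^{-1}=xa^{-\alpha}$ (as $x$ is an involution by Lemma~\ref{ch6lem11}(2)), I compute
\begin{equation*}
(a^{\alpha}x)^{-1}(a^{\alpha'}xy^{\gamma}) = xa^{-\alpha}\cdot a^{\alpha'}xy^{\gamma} = xa^{\alpha'-\alpha}x\cdot y^{\gamma}.
\end{equation*}
The convention of the paper (as used in the proof of Lemma~\ref{ch6lem11}) gives $xa^{\beta}x^{-1}=(a^{\beta})^{x^{-1}}=a^{-\beta}$, so $xa^{\alpha'-\alpha}x = a^{\alpha-\alpha'}$ and the product equals $a^{\alpha-\alpha'}y^{\gamma}$, as required.

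For the reverse containment $L\leqslant \langle a^{\alpha-\alpha'}y^{\gamma}, a^{\alpha'}xy^{\gamma}\rangle$, I only need to recover $a^{\alpha}x$ from the new generators, since $a^{\alpha'}xy^{\gamma}$ is already present. Using $(a^{\alpha-\alpha'}y^{\gamma})^{-1}=y^{-\gamma}a^{\alpha'-\alpha}$, a direct calculation gives
\begin{equation*}
(a^{\alpha'}xy^{\gamma})(a^{\alpha-\alpha'}y^{\gamma})^{-1} = a^{\alpha'}xy^{\gamma}\cdot y^{-\gamma}a^{\alpha'-\alpha} = a^{\alpha'}xa^{\alpha'-\alpha} = a^{\alpha'}\cdot a^{-(\alpha'-\alpha)}x = a^{\alpha}x,
\end{equation*}
where again I used $xa^{\beta}=a^{-\beta}x$, which is just the commutation rule $xax^{-1}=a^{-1}$ built into $\Hol(G)$.

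The main (and only) obstacle is bookkeeping the semidirect-product convention of the paper, namely that conjugation by $x$ inverts elements of $G_R$; once this is pinned down, the proof is two displayed equations with nothing left to verify. No case analysis on the parities of $\alpha,\alpha',\gamma$ is needed, and in particular we do not invoke Lemma~\ref{ch6lem00} or Lemma~\ref{ch6lem01}.
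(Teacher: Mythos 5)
Your proposal is correct and matches the paper's argument: the paper likewise just computes $(a^{\alpha}x)(a^{\alpha'}xy^{\gamma})=a^{\alpha-\alpha'}y^{\gamma}$ (the same product as yours, since $a^{\alpha}x$ is an involution) and concludes the two generating sets span the same subgroup. Your version merely makes the reverse containment explicit, which the paper leaves implicit.
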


\begin{proof} Since 
$$
(a^{\alpha}x)(a^{\alpha'}xy^{\gamma})=a^{\alpha-\alpha'}y^{\gamma},
$$
we have that $L=\langle a^{\alpha-\alpha'}y^{\gamma}, a^{\alpha'}xy^{\gamma}\rangle$. \end{proof}

\begin{llemma}\label{ch6lem23}
Let $L=\langle a^{\alpha}y^{\gamma}, a^{\alpha'}y^{\gamma'}\rangle$ for some $1\leqslant  \alpha, \alpha'\leqslant  2^{n}-1$ and $1\leqslant  \gamma, \gamma'\leqslant  2^{n-2}-1$. Suppose that $(\gamma)_{\bf2}\leqslant  (\gamma')_{\bf2}$. Then  $L=\langle a^{\alpha}y^{\gamma}, a^{\xi}\rangle$ for some $\xi$.
\end{llemma}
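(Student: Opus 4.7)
The plan is to mimic the first case of Lemma~\ref{ch6lem21}. Under the hypothesis $(\gamma)_{\bf 2}\leqslant(\gamma')_{\bf 2}$, I first want to observe that $\langle y^{\gamma'}\rangle\leqslant\langle y^{\gamma}\rangle$ inside the cyclic $2$-group $\langle y\rangle\cong Z_{2^{n-2}}$. Indeed, the order of $y^{\gamma}$ in $\langle y\rangle$ is $2^{n-2}/(\gamma)_{\bf 2}$ and similarly for $y^{\gamma'}$, so the hypothesis gives $|y^{\gamma'}|\leqslant |y^{\gamma}|$, and the subgroup lattice of a cyclic $2$-group is a chain. Hence there exists a positive integer $t$ with $y^{\gamma'}=(y^{\gamma})^{t}=y^{\gamma t}$.

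Next I would raise the first generator to the $t$-th power: by Lemma~\ref{ch6lem11}(1),
\begin{equation*}
(a^{\alpha}y^{\gamma})^{t}=a^{M}y^{\gamma t}=a^{M}y^{\gamma'},\quad\text{where } M=\alpha\cdot\frac{1-5^{-\gamma t}}{1-5^{-\gamma}}.
\end{equation*}
This element of $L$ has the same $y$-component as $a^{\alpha'}y^{\gamma'}$, so the product
\begin{equation*}
(a^{\alpha'}y^{\gamma'})\bigl((a^{\alpha}y^{\gamma})^{t}\bigr)^{-1}=a^{\alpha'}y^{\gamma'}\cdot y^{-\gamma'}a^{-M}=a^{\alpha'-M}
\end{equation*}
lies in $L$ and lies entirely in $G_{R}$. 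Setting $\xi=\alpha'-M$, this shows $a^{\xi}\in L$, so $\langle a^{\alpha}y^{\gamma},a^{\xi}\rangle\leqslant L$.

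Finally, for the reverse inclusion I simply note that $a^{\alpha'}y^{\gamma'}=a^{\xi}\cdot a^{M}y^{\gamma'}=a^{\xi}\cdot(a^{\alpha}y^{\gamma})^{t}\in\langle a^{\alpha}y^{\gamma},a^{\xi}\rangle$, so $L=\langle a^{\alpha}y^{\gamma},a^{\xi}\rangle$ as required. There is no serious obstacle here; the only point to be careful about is that the chain structure of subgroups of $\langle y\rangle$ is used to reduce $y^{\gamma'}$ to a power of $y^{\gamma}$ (and not merely to an element of $\langle y^{\gamma},y^{\gamma'}\rangle$), which is exactly why the hypothesis $(\gamma)_{\bf 2}\leqslant(\gamma')_{\bf 2}$ is imposed.
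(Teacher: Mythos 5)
Your proof is correct and follows essentially the same route as the paper: both exploit $(\gamma)_{\bf 2}\leqslant(\gamma')_{\bf 2}$ to write $y^{\gamma'}$ as a power of $y^{\gamma}$, apply Lemma~\ref{ch6lem11}(1) to that power of $a^{\alpha}y^{\gamma}$, and cancel the $y$-parts to produce an element $a^{\xi}\in L\cap G_R$ (the paper takes $\xi=M-\alpha'$ rather than $\alpha'-M$, which is immaterial). Your explicit verification of the reverse inclusion is a small tidiness the paper leaves implicit.
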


\begin{proof} Let $r=\frac{\gamma'_{\bf2}}{\gamma_{\bf2}}\cdot \gamma^{-1}_{{\bf2}'}\gamma'_{{\bf2}'}$. Then we have that 
\begin{align*}
(a^{\alpha}y^{\gamma})^{r} & =  a^{\alpha\cdot\frac{1-5^{-r\gamma}}{1-5^{-\gamma}}}y^{\gamma\cdot r} \\
   & = a^{\alpha\cdot\frac{1-5^{-r\gamma}}{1-5^{-\gamma}}}y^{\gamma'} \\
   & = a^{M}y^{\gamma'},
\end{align*}
where $M=\alpha\cdot\frac{1-5^{-r\gamma}}{1-5^{-\gamma}}$. Thus 
\begin{align*}
(a^{\alpha}y^{\gamma})^{r}\cdot (a^{\alpha'}y^{\gamma'})^{-1} & = a^{M}y^{\gamma'}\cdot y^{-\gamma'}a^{-\alpha'} \\ 
   & = a^{M-\alpha'} \\
   & = a^{\xi}
\end{align*}
with $\xi=M-\alpha'$. Thus we have that $L=\langle a^{\alpha}y^{\gamma}, a^{\xi}\rangle$.  \end{proof}

\begin{llemma}\label{ch6lem24}
Let $1\leqslant  \alpha, \alpha'\leqslant  2^{n}-1$ and $1\leqslant  \gamma, \gamma'\leqslant  2^{n-2}-1$, and $L=\langle a^{\alpha}xy^{\gamma}, a^{\alpha'}xy^{\gamma'}\rangle$. Then up to conjugacy, for some $M$ and $\xi$, one of the following holds: 
\begin{enumerate}
\item $L=\langle a^{\xi}x, a^{M}y^{\gamma+\gamma'}\rangle$, 
\item $L=\langle a^{\xi}, a^{\alpha'}xy^{\gamma'}\rangle$.
\end{enumerate}  
\end{llemma}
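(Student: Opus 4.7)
The plan is to mimic the strategy used in the proof of Lemma~\ref{ch6lem21}: replace the given generators of $L$ by a cleaner pair (involving their product) and then appeal to Lemma~\ref{ch6lem21} to finish. The first step will be to compute the product of the two generators.

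Specifically, I would multiply out $(a^{\alpha}xy^{\gamma})(a^{\alpha'}xy^{\gamma'})$ using that $\Aut(G)$ is abelian (so $xy=yx$), that $x^{2}={\bf 1}$, and the conjugation rules $y^{\gamma}a^{\alpha'}=a^{\alpha' 5^{-\gamma}}y^{\gamma}$ and $xa^{\beta}=a^{-\beta}x$ that already appear in the proof of Lemma~\ref{ch6lem11}. Pushing $a^{\alpha'}$ to the left past $xy^{\gamma}$ introduces a factor $a^{-\alpha' 5^{-\gamma}}$, and then the two occurrences of $x$ collide via $x^{2}={\bf 1}$, leaving
\[
(a^{\alpha}xy^{\gamma})(a^{\alpha'}xy^{\gamma'}) = a^{M}\,y^{\gamma+\gamma'}, \qquad\text{where } M=\alpha-\alpha'\cdot 5^{-\gamma}.
\]
Since $a^{\alpha}xy^{\gamma}=(a^{M}y^{\gamma+\gamma'})(a^{\alpha'}xy^{\gamma'})^{-1}$, this rewrites
\[
L=\langle a^{M}y^{\gamma+\gamma'},\ a^{\alpha'}xy^{\gamma'}\rangle.
\]

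The second step will be to split on whether the $y$-part of the new first generator is trivial in $\langle y\rangle\cong Z_{2^{n-2}}$. If $\gamma+\gamma'\equiv 0\pmod{2^{n-2}}$, then $y^{\gamma+\gamma'}={\bf 1}$ and the presentation is already of the form $\langle a^{\xi},\ a^{\alpha'}xy^{\gamma'}\rangle$ with $\xi=M$, giving conclusion~(2). Otherwise $\gamma+\gamma'\not\equiv 0\pmod{2^{n-2}}$, and the new pair has exactly the shape required by Lemma~\ref{ch6lem21} (one generator $a^{*}y^{*}$ and one generator $a^{*}xy^{*}$, each with nonzero $y$-exponent). Applying Lemma~\ref{ch6lem21} then returns, up to conjugacy, either $\langle a^{\xi}x,\ a^{M}y^{\gamma+\gamma'}\rangle$, which is conclusion~(1), or $\langle a^{\xi},\ a^{\alpha'}xy^{\gamma'}\rangle$, which is conclusion~(2).

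The main obstacle I anticipate is the bookkeeping in the product computation: one has to keep straight the convention under which $y^{\gamma}$ acts on $a^{\alpha'}$ as $a^{\alpha' 5^{-\gamma}}$ (and not $a^{\alpha' 5^{\gamma}}$) when commuted to the right, and one has to remember to treat $\gamma+\gamma'\equiv 0\pmod{2^{n-2}}$ as a separate subcase so as not to invoke Lemma~\ref{ch6lem21} with a vanishing $y$-exponent. Beyond that the argument is essentially mechanical, because once the product has been massaged into the form $a^{M}y^{\gamma+\gamma'}$, the previous reduction lemma does all of the remaining work.
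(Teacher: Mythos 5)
Your proposal is correct and follows essentially the same route as the paper: compute the product of the two generators to get $a^{M}y^{\gamma+\gamma'}$ with $M=\alpha-\alpha'\cdot 5^{-\gamma}$, rewrite $L=\langle a^{M}y^{\gamma+\gamma'},\ a^{\alpha'}xy^{\gamma'}\rangle$, and invoke Lemma~\ref{ch6lem21}. Your explicit handling of the degenerate subcase $\gamma+\gamma'\equiv 0\pmod{2^{n-2}}$ is a small point of extra care that the paper's proof passes over silently, but it lands in conclusion~(2) either way.
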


\begin{proof} Since 
\begin{align*}
(a^{\alpha}xy^{\gamma})\cdot (a^{\alpha'}xy^{\gamma'}) & = a^{\alpha}y^{\gamma}a^{-\alpha'}y^{\gamma'}
\\
   & = a^{\alpha-\alpha'\cdot 5^{-\gamma}}y^{\gamma+\gamma'},
\end{align*}
we have that $L=\langle a^{M}y^{\gamma+\gamma'}, a^{\alpha'}xy^{\gamma'}\rangle$ with $M=\alpha-\alpha'\cdot 5^{-\gamma}$. By Lemma~\ref{ch6lem21} we have that either $L=\langle a^{\xi}x, a^{M}y^{\gamma+\gamma'}\rangle$, or $L=\langle a^{\xi}, a^{\alpha'}xy^{\gamma'}\rangle$ for some $\xi$. \end{proof}

\subsection{Semiregular Elements}

\begin{defin}
A group $X$ is \emph{semiregular} on a finite set $\Omega$ if $X_{\omega}=\bf1$ for all $\omega\in \Omega$. We say that $g\in X$ is a \emph{semiregular element} (or \emph{semiregular}) if $\langle g\rangle$ is semiregular on $\Omega$.
\end{defin}

Recall that $G=\langle a\rangle\cong Z_{2^{n}}$ and $H=\Hol(G)$. Let $R$ be a subgroup of $H$. If $R$ is regular, then each element of $R$ is semiregular. Thus it is necessary to study the semiregular elements of $H$ and we classify all such elements in this section.

Let $g=a^{\epsilon}\in G$ where $1\leqslant  \epsilon\leqslant  2^{n}-1$. We now view the elements of $G$ as the elements of $G_R\leqslant\Hol(G)$ and of the set $G$ that $\Hol(G)$ acts on. Precisely, this means that for $h=a^{\alpha}x^{\beta}y^{\gamma}\in\Hol(G)$ we have that 
$$
g^{h}=(a^{\epsilon})^{a^\alpha x^{\beta}y^{\gamma}}=(a^{\epsilon+\alpha})^{x^{\beta}y^{\gamma}}.
$$
Moreover, $H_g$ denotes the stabiliser of $g\in\Omega$ in $H$.

\begin{llemma}\label{ch6lem300}
Let $g\in G$ and $g\neq {\bf1}$. Then $H_{g}=\langle g^{-2}x, g^{5^{-1}-1}y\rangle$.
\end{llemma}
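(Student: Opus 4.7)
The plan is to verify that the two proposed generators lie in $H_g$, compute $|H_g|$ by orbit-stabiliser, and then use the quotient by $G_R$ to show that the subgroup they generate already attains the correct order.

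First I would directly check that both elements fix $g=a^\epsilon$ using the action formula just stated. For $h=g^{-2}x=a^{-2\epsilon}x$ (here $\alpha=-2\epsilon$, $\beta=1$, $\gamma=0$), one has
$$g^{h}=(a^{\epsilon-2\epsilon})^{x}=(a^{-\epsilon})^{x}=a^{\epsilon}=g,$$
and for $h=g^{5^{-1}-1}y=a^{\epsilon(5^{-1}-1)}y$ (so $\alpha=\epsilon(5^{-1}-1)$, $\beta=0$, $\gamma=1$),
$$g^{h}=(a^{\epsilon+\epsilon(5^{-1}-1)})^{y}=(a^{\epsilon\cdot 5^{-1}})^{y}=a^{\epsilon\cdot 5^{-1}\cdot 5}=a^{\epsilon}=g.$$
Thus both generators lie in $H_g$, so $K:=\langle g^{-2}x,\, g^{5^{-1}-1}y\rangle\leqslant H_g$.

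Next I would compute $|H_g|$. Since $G_R\leqslant H$ acts regularly on the underlying set $G$, the $H$-orbit of $g$ is all of $G$, whence by orbit-stabiliser
$$|H_g|=\frac{|H|}{|G|}=\frac{2^{2n-1}}{2^{n}}=2^{n-1}.$$

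To finish, I would show $|K|\geqslant 2^{n-1}$ by projecting to the quotient $\pi\colon H\to H/G_R\cong\Aut(G)=\langle x\rangle\times\langle y\rangle$, which has order $2\cdot 2^{n-2}=2^{n-1}$. Under $\pi$ the generators map to $x$ and $y$ respectively, so
$$\pi(K)\supseteq \langle x,y\rangle=\Aut(G),$$
and hence $\pi(K)=\Aut(G)$. Therefore $|K|\geqslant|\pi(K)|=2^{n-1}$. Combined with $K\leqslant H_g$ and $|H_g|=2^{n-1}$, this forces $K=H_g$, proving the lemma.

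There is no real obstacle: the only possible pitfall is confusing the composition/action conventions when verifying that the two given elements fix $g$, and ensuring that the images of the two generators in $\Aut(G)$ genuinely generate $\Aut(G)$ (which they do, being the two standard generators $x$ and $y$). Neither Lemma 3.1.2 nor the number-theoretic lemmas are needed for this argument, since the quotient map does all of the counting work.
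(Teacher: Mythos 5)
Your proof is correct, but it takes a different route from the paper. The paper's argument is a one-liner: since $G_R\leqslant H$ makes the action transitive, $H_g=H_{\bf 1}^{\,g}$, and $H_{\bf 1}=\langle x,y\rangle$, so one only needs to compute the conjugates $g^{-1}xg=g^{-2}x$ and $g^{-1}yg=g^{5^{-1}-1}y$ in $\Hol(G)=G_R\rtimes\Aut(G)$. You instead verify directly that the two elements fix $g$, and then pin down the whole stabiliser by a counting argument: orbit--stabiliser gives $|H_g|=2^{2n-1}/2^n=2^{n-1}$, while the projection $H\to H/G_R\cong\Aut(G)$ sends your two generators to $x$ and $y$, forcing $|K|\geqslant|\Aut(G)|=2^{n-1}$ and hence $K=H_g$. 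All of your computations check out (the action formula, the order of $H$, and the inequality $|K|\geqslant|\pi(K)|$). What the paper's approach buys is brevity and the fact that it produces the generators rather than merely verifying them; what yours buys is that the membership of the two elements in $H_g$ is checked explicitly against the stated action convention, and the conjugation formulae in the semidirect product are never needed.
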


\begin{proof} Since $H_{\bf1}=\langle x, y\rangle$, we have that $H_{g}  = H^{g}_{\bf1}=\langle g^{-2}x, g^{5^{-1}-1}y\rangle$. \end{proof}

\begin{llemma}\label{ch6lem301}
Let $h\in H$ and $h\neq {\bf1}$. Then $h$ is semiregular if and only if $h^{\frac{|h|}{2}}$ is semiregular. 
\end{llemma}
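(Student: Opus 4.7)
The plan is to prove both directions quickly by exploiting that $|H|=2^{2n-1}$, so every element of $H$ has order a power of $2$, and by translating semiregularity into the statement that the stabiliser in $\langle h\rangle$ of every point is trivial.

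For the forward direction, I would observe that if $h$ is semiregular then $\langle h\rangle$ acts semiregularly on $G$; since $\langle h^{|h|/2}\rangle$ is a subgroup of $\langle h\rangle$, it too is semiregular, so $h^{|h|/2}$ is a semiregular element. (No stabiliser of a point in a subgroup can be larger than the stabiliser in the full cyclic group.)

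For the reverse direction, I would argue by contrapositive. Suppose $h$ is not semiregular; then there is some $g\in G$ and some $k$ with $1\leqslant k<|h|$ such that $h^k$ fixes $g$. The stabiliser $\langle h\rangle_g$ is a nontrivial subgroup of the cyclic group $\langle h\rangle$, so it is generated by $h^m$ where $m$ is the minimal positive integer with $h^m\in H_g$; in particular $m$ is a proper divisor of $|h|$. Because $|H|=2^{2n-1}$ every order here is a power of $2$, so writing $|h|=2^s$ and $m=2^r$ we get $r\leqslant s-1$, i.e.\ $m\leqslant |h|/2$ and $m\mid |h|/2$. Hence $h^{|h|/2}$ is a power of $h^m$ and so lies in $H_g$. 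Since $h\neq\mathbf{1}$ implies $h^{|h|/2}\neq\mathbf{1}$, this shows $\langle h^{|h|/2}\rangle$ has a non-trivial stabiliser at $g$, so $h^{|h|/2}$ is not semiregular.

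There is no real obstacle here: the result is essentially a statement about cyclic $2$-groups acting on a set, namely that a cyclic $2$-group is semiregular if and only if its unique subgroup of order $2$ is semiregular. The only thing to be careful about is recording that $h$ has $2$-power order (so that $m\mid|h|$ and $m<|h|$ together force $m\mid|h|/2$), and that $h^{|h|/2}\neq\mathbf{1}$ so the stabiliser we produce at $g$ really is non-trivial.
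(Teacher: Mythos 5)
Your proposal is correct and follows essentially the same route as the paper: the forward direction is the trivial observation that subgroups of semiregular groups are semiregular, and the reverse direction uses the fact that, since $|H|=2^{2n-1}$, the cyclic $2$-group $\langle h\rangle$ has $\langle h^{|h|/2}\rangle$ as its unique minimal subgroup, so it lies in every nontrivial stabiliser $\langle h^k\rangle$. The paper states the containment $\langle h^{|h|/2}\rangle\leqslant\langle h^k\rangle$ directly, whereas you derive it via the minimal generator of the point stabiliser, but the content is identical.
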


\begin{proof} Recall that if $h$ is not semiregular, then there exists some $g\in G$ and some $k$ such that $g^{h^{k}}=g$ and $h^{k}\neq{\bf1}$. Moreover, any power of $h^{k}$ fixes $g$. Clearly if $h$ is semiregular, then $h^{\frac{|h|}{2}}$ is semiregular. Now suppose that $h^{\frac{|h|}{2}}$ is semiregular. Since $\langle h^{\frac{|h|}{2}}\rangle\leqslant \langle h^{k}\rangle$ for all $1\leqslant k\leqslant |h|-1$, we have that $h$ is semiregular. \end{proof}

Let $h\in H$, and suppose that  $h=a^{\alpha}x^{\beta}y^{\gamma}$ where $0\leqslant  \alpha\leqslant  2^{n}-1$, $0\leqslant  \beta\leqslant  1$ and $0\leqslant  \gamma\leqslant  2^{n-2}-1$. By Lemma~\ref{ch6lem13} we have that $h$ is conjugate to $a^{\alpha_{\bf2}}x^{\beta}y^{\gamma}$ where $\alpha_{\bf2}=2^{t}$ for some $0\leqslant  t\leqslant  n$. Note that if two elements are conjugate, then either they are both semiregular, or both non-semiregular. Thus it is sufficient to check which $a^{2^{t}}x^{\beta}y^{\gamma}$ are semiregular with $0\leqslant  t\leqslant  n$. Let $h=a^{2^{t}}x^{\beta}y^{\gamma}$. We may observe that if $t=n$, that is, $h=x^{\beta}y^{\gamma}$, then $h$ is semiregular if and only if $h=\bf1$. Thus we may assume that $0\leqslant  t\leqslant  n-1$. Also we know that if $h$ is semiregular, then $h^{r}$ fixes a group element in $G$ if and only if $h^{r}=\bf1$.

\begin{llemma}\label{ch6lem31}
Suppose that  $h=a^{2^{t}}y^{\gamma}$ where $0\leqslant  t\leqslant  n-1$ and $1\leqslant  \gamma\leqslant  2^{n-2}-1$. Then $h$ is semiregular if and only if $h=a^{2^{t}}y^{\gamma}$ with $2^{t}<4\gamma_{\bf2}$.
\end{llemma}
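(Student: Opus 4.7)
The idea is to apply Lemma~\ref{ch6lem301}, reducing semiregularity of $h$ to semiregularity of the involution $h^{|h|/2}$: since $H$ is a $2$-group, $|h|$ is always a power of $2$ (necessarily $\geqslant 2$ as $h\neq\mathbf 1$), and an involution in $H$ is semiregular iff it has no fixed point on $G$. Now Lemma~\ref{ch6lem12}(1) gives $|h|=\max(2^{n-t},\,2^{n-2}/\gamma_{\bf 2})$, so the natural split is the dichotomy \emph{(i)} $2^t<4\gamma_{\bf 2}$ (hence $|h|=2^{n-t}$ and $k:=|h|/2=2^{n-t-1}$) versus \emph{(ii)} $2^t\geqslant 4\gamma_{\bf 2}$ (hence $k=2^{n-3}/\gamma_{\bf 2}$, which coincides with $2^{n-t-1}$ precisely when $2^t=4\gamma_{\bf 2}$). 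In every case Lemma~\ref{ch6lem11}(1) gives
\[ h^k \;=\; a^{2^t M}\,y^{k\gamma}, \qquad M\;=\;\tfrac{1-5^{-k\gamma}}{1-5^{-\gamma}},\]
and Lemma~\ref{ch6lem01} supplies $M_{\bf 2}=k_{\bf 2}=k$. The bulk of the proof is then the arithmetic reduction of $k\gamma$ modulo $2^{n-2}$ and of $2^tM$ modulo $2^n$.

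\smallskip

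\noindent\emph{Case (i).} Here $\gamma_{\bf 2}>2^{t-2}$, so (as powers of $2$) $\gamma_{\bf 2}\geqslant 2^{t-1}$ and hence $2^{n-2}\mid k\gamma=2^{n-t-1}\gamma$; the $y$-part vanishes. The $a$-exponent is $2^t\cdot k\cdot(\text{odd})=2^{n-1}\cdot(\text{odd})\equiv 2^{n-1}\pmod{2^n}$, so $h^{|h|/2}=a^{2^{n-1}}$, the central involution of $G_R$, which is fixed-point-free on $G$; by Lemma~\ref{ch6lem301}, $h$ is semiregular.

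\smallskip

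\noindent\emph{Case (ii).} In both sub-cases $k\gamma=2^{n-3}\gamma_{{\bf 2}'}\equiv 2^{n-3}\pmod{2^{n-2}}$, so the $y$-part is the nontrivial involution $y^{2^{n-3}}$. When $2^t>4\gamma_{\bf 2}$ one has $2^t/\gamma_{\bf 2}\geqslant 8$, so $2^tk\geqslant 2^n$; the $a$-part vanishes and $h^{|h|/2}=y^{2^{n-3}}$, which fixes $\mathbf 1$. In the borderline case $2^t=4\gamma_{\bf 2}$ we have $2^tk=2^{n-1}$, giving $h^{|h|/2}=a^{2^{n-1}}y^{2^{n-3}}$; here I invoke Lemma~\ref{ch6lem00} with exponent $n-3$ to conclude $5^{2^{n-3}}\equiv 1+2^{n-1}\pmod{2^n}$, whence
\[ (a^\epsilon)^{a^{2^{n-1}}y^{2^{n-3}}}\;=\;a^{(\epsilon+2^{n-1})(1+2^{n-1})}\;=\;a^{\epsilon+2^{n-1}(1+\epsilon)},\]
which equals $a^\epsilon$ for every odd $\epsilon$. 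In either sub-case $h^{|h|/2}$ has a fixed point, so by Lemma~\ref{ch6lem301} $h$ is not semiregular.

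\smallskip

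\noindent The main technical obstacle I expect is the careful bookkeeping of $2$-adic valuations, and in particular the borderline $2^t=4\gamma_{\bf 2}$: here neither the $a$-part $a^{2^{n-1}}$ nor the $y$-part $y^{2^{n-3}}$ of $h^{|h|/2}$ fixes a point on its own, and only the specific value $5^{2^{n-3}}\equiv 1+2^{n-1}\pmod{2^n}$ from Lemma~\ref{ch6lem00} makes their product fix precisely the odd-indexed elements of $G$.
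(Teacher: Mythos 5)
Your proposal is correct and follows essentially the same route as the paper: reduce via Lemma~\ref{ch6lem301} to the involution $h^{|h|/2}$, compute it with Lemmas~\ref{ch6lem11}, \ref{ch6lem12} and \ref{ch6lem01}, and handle the borderline case $2^{t}=4\gamma_{\bf 2}$ by using Lemma~\ref{ch6lem00} to show $a^{2^{n-1}}y^{2^{n-3}}$ fixes $a$. The only cosmetic difference is that you verify the fixed points of $a^{2^{n-1}}y^{2^{n-3}}$ for all odd exponents, whereas the paper checks only the single point $a$; both suffice.
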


\begin{proof}  First we show that $h$ is semiregular if $2^{t}<4\gamma_{\bf2}$. Since $2^{t}< 4\gamma_{\bf2}$, we have that $\gamma_{\bf2}\geqslant2^{t-1}$ and so by Lemma~\ref{ch6lem12}, $|h|=2^{n-t}$. Thus 
\begin{align*}
h^{\frac{|h|}{2}} = h^{2^{n-t-1}} & = (a^{2^{t}}y^{\gamma})^{2^{n-t-1}}\\
                           & = a^{M}y^{\gamma\cdot 2^{n-t-1}}\\
                           & = a^{M}
\end{align*}
with $M=2^{t}\frac{1-5^{-2^{n-t-1}\gamma}}{1-5^{-\gamma}}$. By Lemma~\ref{ch6lem01} we have that $M_{\bf2}=2^{n-1}$ and so $h^{\frac{|h|}{2}}=a^{2^{n-1}}\in G_R$. Thus $h^{\frac{|h|}{2}}$ is semiregular, and so by Lemma~\ref{ch6lem301} we have that  $h$ is semiregular when $2^{t}<4\gamma_{\bf2}$. 

Now suppose that $2^{t}\geqslant 4\gamma_{\bf2}$, and so by Lemma~\ref{ch6lem12}, $|h|=\frac{2^{n-2}}{\gamma_{\bf2}}$. Thus 
\begin{align*}
h^{\frac{|h|}{2}} & = (a^{2^{t}}y^{\gamma})^{\frac{2^{n-3}}{\gamma_{\bf2}}} \\
                           & = a^{M}y^{2^{n-3}\gamma_{{\bf2}'}}
\end{align*}
with $M=2^{t}\frac{1-5^{-\gamma(\frac{2^{n-3}}{\gamma_{\bf2}})}}{1-5^{-\gamma}}$, and so by Lemma~\ref{ch6lem01},  $M_{\bf2}=\frac{2^{n+t-3}}{\gamma_{\bf2}}$. Observe that $y^{2^{n-3}\gamma_{{\bf2}'}}=y^{2^{n-3}}$ as $\gamma_{{\bf2}'}$ is odd and $|y^{2^{n-3}}|=2$. Thus $h^{\frac{|h|}{2}}=a^{M}y^{2^{n-3}}$. If $2^{t}>4\gamma_{\bf2}$, then 
\begin{equation*}
M_{\bf2}> 2^{n-1},
\end{equation*}
which implies that $2^{n}\mid M$. Thus $h^{\frac{|h|}{2}}=y^{2^{n-3}}\in H_{\bf1}$, and by Lemma~\ref{ch6lem301} we have that $h$ is not semiregular. 

If $2^{t}=4\gamma_{\bf2}$, then $h^{\frac{|h|}{2}}=a^{2^{n-1}}y^{2^{n-3}}$. Since 
\begin{align*}
a^{a^{2^{n-1}}y^{2^{n-3}}} & = a^{(1+2^{n-1})\cdot 5^{2^{n-3}}} \\
                                          & = a^{(5^{2^{n-3}}-1)+2^{n-1}\cdot 5^{2^{n-3}}+1}
\end{align*}
and by Lemma~\ref{ch6lem00}, we have that $5^{2^{n-3}}\equiv 1 \Mod {2^{n-1}}$, and so 
\begin{equation}\label{ch6equlem31}
a^{a^{2^{n-1}}y^{2^{n-3}}}=a.
\end{equation} 
Thus $h^{\frac{|h|}{2}}$ is not semiregular when $2^{t}=4\gamma_{\bf2}$. Hence  $h$ is not semiregular when $2^{t}\geqslant 4\gamma_{\bf2}$, and so we conclude that $h$ is semiregular if and only if $2^{t}< 4\gamma_{\bf2}$. \end{proof}

\begin{llemma}\label{ch6lem32}
Suppose that  $h=a^{2^{t}}x$ with $0\leqslant  t\leqslant  n-1$. Then $h$ is semiregular if and only if $h=ax$.
\end{llemma}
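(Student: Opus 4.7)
The plan is to use the action formula $(a^\epsilon)^{a^\alpha x^\beta y^\gamma}=(a^{\epsilon+\alpha})^{x^\beta y^\gamma}$ together with Lemma~\ref{ch6lem11}(2) to reduce the question to a congruence. By Lemma~\ref{ch6lem11}(2), every element of the form $a^{2^t}x$ is an involution, so $\langle h\rangle=\{{\bf 1},h\}$, and semiregularity of $h$ is equivalent to $h$ having no fixed points on $G$.

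To locate the fixed points I will compute, for $g=a^\epsilon$,
$$g^h=(a^\epsilon)^{a^{2^t}x}=(a^{\epsilon+2^t})^{x}=a^{-\epsilon-2^t},$$
so $g^h=g$ if and only if $2\epsilon+2^t\equiv 0\pmod{2^n}$. From here the argument splits cleanly into two cases. If $t=0$, the congruence becomes $2\epsilon\equiv -1\pmod{2^n}$, which has no solution because the left side is even while $-1$ is odd modulo $2^n$ (recall $n\geqslant 3$); hence $ax$ is fixed-point-free and therefore semiregular. If $t\geqslant 1$, the congruence reduces to $\epsilon\equiv -2^{t-1}\pmod{2^{n-1}}$, which certainly has solutions $\epsilon\in\{0,1,\ldots,2^n-1\}$, so $h$ has fixed points and is not semiregular.

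Combining the two cases gives exactly the statement: $h=a^{2^t}x$ is semiregular if and only if $t=0$, i.e.\ $h=ax$. There is no genuine obstacle here; the main point is to set up the action of $a^{2^t}x$ correctly and then recognise that the parity of $2\epsilon+2^t$ forces $t=0$. In particular, no appeal to Lemma~\ref{ch6lem301} is needed because the order of $h$ is $2$, so semiregularity of $h$ is the same as semiregularity of $h^{|h|/2}=h$ itself.
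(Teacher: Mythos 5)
Your proof is correct and follows essentially the same route as the paper: both arguments reduce semiregularity of the involution $a^{2^{t}}x$ to fixed-point-freeness, exhibit the fixed point $a^{-2^{t-1}}$ when $t\geqslant 1$ (the paper reads it off from the stabiliser formula in Lemma~\ref{ch6lem300}, you derive it from the congruence $2\epsilon+2^{t}\equiv 0\pmod{2^{n}}$), and observe that for $t=0$ the parity obstruction leaves $ax$ fixed-point-free.
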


\begin{proof}  By Lemma~\ref{ch6lem300}, $H_{a^{-2^{t-1}}}=\langle a^{2^{t}}x, a^{-2^{t-1}(5^{-1}-1)}\rangle$,  and so  if $t\geqslant 1$, then $h$ fixes $a^{-2^{t-1}}$, and so $h$ is not semiregular. If $t=0$, then since $h$ is an involution and interchanges the cosets $\langle a^{2}\rangle$ and $\langle a^{2}\rangle a$, $h$ is semiregular.  \end{proof}

\begin{llemma}\label{ch6lem33}
Suppose that  $h=a^{2^{t}}xy^{\gamma}$ with $0\leqslant t\leqslant n$ and $1\leqslant \gamma\leqslant 2^{n-2}-1$. Then $h$ is semiregular if and only if $h=axy^{\gamma}$. 
\end{llemma}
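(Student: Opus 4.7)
The plan is to combine Lemma~\ref{ch6lem301}, which reduces semiregularity of $h$ to that of $h^{|h|/2}$, with the explicit power formula in Lemma~\ref{ch6lem11}(3) and the $2$-adic valuation identities in Lemma~\ref{ch6lem01}. Throughout, the order of $h$ is read off from Lemma~\ref{ch6lem12}(2): if $\alpha = 2^t$ is odd (i.e.\ $t=0$) then $|h|=2^{n-1}/\gamma_{\bf 2}$, while if $\alpha$ is even (i.e.\ $t\geqslant 1$) then $|h|=2^{n-2}/\gamma_{\bf 2}$.

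For the sufficient direction ($h=axy^\gamma$), I would take $r = |h|/2 = 2^{n-2}/\gamma_{\bf 2}$. The constraint $\gamma\leqslant 2^{n-2}-1$ forces $\gamma_{\bf 2}\leqslant 2^{n-3}$, so $r$ is an even power of $2$, and the even-$r$ case of Lemma~\ref{ch6lem11}(3) gives $h^{r} = a^{L}\,y^{r\gamma}$ with $L = (1-5^{-r\gamma})/(1+5^{-\gamma})$. The $y$-exponent is $2^{n-2}\gamma_{{\bf 2}'}$, a multiple of $|y|=2^{n-2}$, so the $y$-factor is trivial; Lemma~\ref{ch6lem01} gives $L_{\bf 2}=2r_{\bf 2}\gamma_{\bf 2}=2^{n-1}$, so $h^{r}=a^{2^{n-1}}\in G_R\setminus\{{\bf 1}\}$ is semiregular, and Lemma~\ref{ch6lem301} concludes.

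For the necessary direction ($t\geqslant 1$), the boundary case $t=n$ is immediate since then $h=xy^\gamma\in H_{\bf 1}$. For $1\leqslant t\leqslant n-1$, I would split on whether $r=|h|/2=2^{n-3}/\gamma_{\bf 2}$ is at least $2$. If $\gamma_{\bf 2}\leqslant 2^{n-4}$, then $r$ is an even power of $2$, and the same computation yields $h^{r} = a^{2^t L}\,y^{2^{n-3}}$ with $L_{\bf 2}=2^{n-2}$ by Lemma~\ref{ch6lem01}. For $t\geqslant 2$, $(2^t L)_{\bf 2}\geqslant 2^n$, so $h^{r}=y^{2^{n-3}}\in H_{\bf 1}$; for $t=1$, $(2L)_{\bf 2}=2^{n-1}$ exactly, so $h^{r}=a^{2^{n-1}}y^{2^{n-3}}$, which fixes $a$ by equation~(\ref{ch6equlem31}) established inside the proof of Lemma~\ref{ch6lem31}. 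Either way $h^{r}$ is not semiregular, and neither is $h$ by Lemma~\ref{ch6lem301}.

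The residual possibility is $\gamma_{\bf 2}=2^{n-3}$, in which $|h|=2$ and Lemma~\ref{ch6lem301} is uninformative. Here I would exhibit a fixed point of $h$ directly: the equation $(a^\epsilon)^h = a^\epsilon$ becomes $(1+5^\gamma)\epsilon\equiv -2^t\cdot 5^\gamma\Mod{2^n}$, and since $5^\gamma\equiv 1\Mod{2^{n-1}}$ by Lemma~\ref{ch6lem00} we have $(1+5^\gamma)_{\bf 2}=2$, reducing the equation to a unit congruence modulo $2^{n-1}$ in $\epsilon$ (using $t\geqslant 1$), which is solvable. Thus $h$ has a fixed vertex and is not semiregular. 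This residual case is the main obstacle, because one cannot appeal to Lemma~\ref{ch6lem301} and must instead argue by hand through the $2$-adic valuation of $1+5^\gamma$.
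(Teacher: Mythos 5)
Your proposal is correct, and for the bulk of the argument (the sufficiency for $t=0$, the boundary case $t=n$, and the case $1\leqslant t\leqslant n-1$ with $\gamma_{\bf 2}\leqslant 2^{n-4}$) it coincides with the paper's proof: both compute $h^{|h|/2}$ via Lemma~\ref{ch6lem11}(3) and Lemma~\ref{ch6lem01}, and then invoke Lemma~\ref{ch6lem301}, splitting on $t\geqslant 2$ versus $t=1$ and using equation~(\ref{ch6equlem31}) in the latter case. The one place where you genuinely diverge is the residual case $t\geqslant 1$, $\gamma_{\bf 2}=2^{n-3}$, where $|h|=2$. The paper handles this by computing the point stabiliser $H_{a^{2^{t-1}}}=\langle a^{-2^{t}}x,\ a^{2^{t-1}(5^{-1}-1)}y\rangle$ from Lemma~\ref{ch6lem300}, exhibiting inside it an element of the form $a^{2^{t}M}xy^{2^{n-3}}$ with $M$ odd, and then appealing to Lemma~\ref{ch6lem13} to conclude that this element is conjugate to $h$, so $h$ itself is not semiregular. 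You instead solve the fixed-point congruence $(1+5^{\gamma})\epsilon\equiv -2^{t}\cdot 5^{\gamma}\Mod{2^{n}}$ directly: since $5^{\gamma}\equiv 1\Mod{2^{n-1}}$ by Lemma~\ref{ch6lem00}, one has $(1+5^{\gamma})_{\bf 2}=2$, and because $t\geqslant 1$ the congruence reduces to a unit equation modulo $2^{n-1}$, hence is solvable; this computation is correct (the action is $(a^{\epsilon})^{h}=a^{-(\epsilon+2^{t})5^{\gamma}}$, which gives exactly your congruence). Your route is slightly more elementary in that it avoids the stabiliser lemma and the conjugacy reduction and produces an explicit fixed vertex; the paper's route avoids solving a congruence by recycling Lemmas~\ref{ch6lem300} and~\ref{ch6lem13}, which are already in place for the other semiregularity lemmas. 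Both are complete proofs.
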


\begin{proof}  First suppose that $t=0$, and so $|h|=\frac{2^{n-1}}{\gamma_{\bf2}}$ by Lemma~\ref{ch6lem12}. Since $\frac{|h|}{2}$ is even, by Lemma~\ref{ch6lem11}(3) we have that  
\begin{equation*}
h^{\frac{|h|}{2}}= (axy^{\gamma})^{\frac{2^{n-2}}{\gamma_{\bf2}}}=a^{M}
\end{equation*}
with $M=\frac{1-5^{-\gamma\cdot\frac{2^{n-2}}{\gamma_{\bf2}}}}{1+5^{-\gamma}}$ by Lemma~\ref{ch6lem11}. Thus by Lemma~\ref{ch6lem01}, $M_{\bf2}=2\gamma_{\bf2}\frac{2^{n-2}}{\gamma_{\bf2}}=2^{n-1}$, which implies that $h^{\frac{|h|}{2}}=a^{2^{n-1}}\in G$. Hence by Lemma~\ref{ch6lem301}, $h$ is semiregular when $t=0$. 

Suppose now that $t\geqslant 1$, and so $|h|=\frac{2^{n-2}}{\gamma_{\bf2}}$. Notice that $\frac{|h|}{2}$ is even if and only if $\gamma_{\bf2}<2^{n-3}$. Assume that $\gamma_{\bf2}<2^{n-3}$, then 
\begin{align*}
h^{\frac{|h|}{2}} & = a^{M}y^{\gamma\cdot\frac{2^{n-3}}{\gamma_{\bf2}}} \\
                           & = a^{M}y^{2^{n-3}}
\end{align*}                           
with $M=2^{t}\frac{1-5^{-\gamma\cdot\frac{|h|}{2}}}{1+5^{-\gamma}}$ by Lemma~\ref{ch6lem11}, and so by Lemma~\ref{ch6lem01}
$$
h^{\frac{|h|}{2}}=a^{2^{n+t-2}}y^{2^{n-3}}.
$$  
If $t\geqslant 2$, then $h^{\frac{|h|}{2}}=y^{2^{n-3}}\neq {\bf1}$ and $y^{2^{n-3}}\in H_{\bf1}$, which implies that $h^{\frac{|h|}{2}}$ is not semiregular. Thus we may assume that $t=1$. Then $h^{\frac{|h|}{2}}=a^{2^{n-1}}y^{2^{n-3}}$, and so by (\ref{ch6equlem31}) we have that $h^{\frac{|h|}{2}}$ fixes $a$. Thus $h^{\frac{|h|}{2}}$ is not semiregular, and by Lemma~\ref{ch6lem301} we have that $h$ is not semiregular. Hence $h$ is not semiregular when $\gamma_{\bf2}<2^{n-3}$ and $t\geqslant 1$.

Now it remains to consider the case where $\gamma_{\bf2}=2^{n-3}$ and $t\geqslant 1$. Here $|h|=2$. By Lemma~\ref{ch6lem300} we have that
\begin{equation*}
H_{a^{2^{t-1}}}=\langle a^{-2^{t}}x, a^{2^{t-1}(5^{-1}-1)}y\rangle.
\end{equation*} 
Thus $(a^{2^{t-1}(5^{-1}-1)}y)^{2^{n-3}}\in H_{a^{2^{t-1}}}$, and so $H_{a^{2^{t-1}}}$ also contains
\begin{align*}
a^{-2^{t}}x[(a^{2^{t-1}(5^{-1}-1)}y)^{2^{n-3}}] & = a^{-2^{t}}x[a^{2^{t-1}(5^{-1}-1)\frac{1-5^{-2^{n-3}}}{1-5^{-1}}}y^{2^{n-3}}] \\
       & = a^{-2^{t}}x[a^{2^{t-1}(5^{-2^{n-3}}-1)}y^{2^{n-3}}]\\
       & = a^{-2^{t}}xa^{2^{t-1}2\ell}y^{2^{n-3}} \\
       & = a^{-2^{t}}a^{-2^{t}\ell}xy^{2^{n-3}}  \\
       & = a^{2^{t}M}xy^{2^{n-3}}
\end{align*}
with $M=-(1+\ell)$ and $5^{-2^{n-3}}-1=2\ell$. Note that by Lemma~\ref{ch6lem00}, $5^{-2^{n-3}}\equiv 1 \Mod {2^{n-1}}$, and so $\ell$ is even and $M$ is odd. Since $a^{2^{t}M}xy^{2^{n-3}}\neq \bf1$, we have that $a^{2^{t}M}xy^{2^{n-3}}$ is not semiregular. Moreover, by Lemma~\ref{ch6lem13}, $a^{2^{t}M}xy^{2^{n-3}}$ and $h=a^{2^{t}}xy^{2^{n-3}}$ are conjugate. Thus $h$ is not semiregular when $t\geqslant 1$ and $\gamma_{\bf2}=2^{n-3}$, and so $h$ is not semiregular with $t\geqslant 1$. Hence $h$ is semiregular if and only if $h=axy^{\gamma}$. \end{proof}

Notice that $G_R$ acts regularly on $\Omega=G$, and so $g$ is semiregular for all $g\in G_R$. Since all elements of $G_R$ of the same order are conjugate in $\Hol(G)$, together with Lemma~\ref{ch6lem31}, Lemma~\ref{ch6lem32} and Lemma~\ref{ch6lem33} we obtain the following theorem.

\begin{guess}\label{ch6them31}
Let $h\in H$. If $n\geqslant 3$, then $h$ is semiregular if and only if $h$ is conjugate to one of
\begin{enumerate}
\item $a^{2^{t}}$ with $0\leqslant t\leqslant n-1$,
\item $a^{2^{t}}y^{\gamma}$ with $2^{t}<4\gamma_{\bf2}$ and  $1\leqslant \gamma\leqslant 2^{n-2}-1$,
\item $axy^{\gamma}$ with $0\leqslant \gamma\leqslant2^{n-2}-1$.
\end{enumerate}
\end{guess}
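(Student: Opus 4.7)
The plan is to combine Lemma~\ref{ch6lem13} with the three case-analysis lemmas established earlier in the section. Lemma~\ref{ch6lem13} reduces every $h = a^{\alpha}x^{\beta}y^{\gamma} \in H$ to a conjugate of the form $a^{2^t}x^{\beta}y^{\gamma}$ for some $0 \le t \le n$, and since semiregularity is a conjugation-invariant property (conjugation in $H$ permutes the set $\Omega = G$ and carries point stabilisers to point stabilisers), it suffices to decide semiregularity of a single representative from each conjugacy class.

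First I would dispose of the degenerate case $t = n$: here the representative $x^{\beta}y^{\gamma}$ lies in $H_{\bf 1} = \langle x, y\rangle$ by Lemma~\ref{ch6lem300}, so it fixes the identity of $G$ and is therefore non-semiregular unless it is trivial. Then for $0 \le t \le n-1$ I would split on the pair $(\beta, \gamma)$. The pair $(0,0)$ gives $h = a^{2^t}\in G_R$, which is semiregular because $G_R$ is regular, producing family~(1). The case $\beta = 0$, $\gamma \ne 0$ is governed by Lemma~\ref{ch6lem31}, which delivers precisely the condition $2^t < 4\gamma_{\bf 2}$ of family~(2). The case $\beta = 1$, $\gamma = 0$ is governed by Lemma~\ref{ch6lem32}, which forces $t = 0$ and contributes the $\gamma = 0$ subcase of family~(3). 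Finally, the case $\beta = 1$, $\gamma \ne 0$ is governed by Lemma~\ref{ch6lem33}, which also forces $t = 0$ and yields $h = a x y^{\gamma}$, completing family~(3).

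The main technical work is already carried by the three case-analysis lemmas, so the theorem itself is an assembly step rather than an obstacle. The one subtlety worth articulating is the consolidation of family~(1): all elements of $G_R$ of the same order lie in a single $H$-conjugacy class, because $\Aut(G)$ acts transitively on the set of generators of each cyclic subgroup of $G$, so one representative $a^{2^t}$ per order suffices. After this observation the three families are visibly disjoint — the exponent $t$ distinguishes~(1) from~(2), and the presence of $x$ distinguishes~(3) from the others — so the classification is complete.
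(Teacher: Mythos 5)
Your proposal is correct and follows essentially the same route as the paper: the paper likewise uses Lemma~\ref{ch6lem13} and conjugation-invariance of semiregularity to reduce to representatives $a^{2^{t}}x^{\beta}y^{\gamma}$, disposes of $t=n$ by noting such elements lie in $H_{\bf 1}$, invokes Lemmas~\ref{ch6lem31}, \ref{ch6lem32} and \ref{ch6lem33} for the three nontrivial $(\beta,\gamma)$ cases, and records that elements of $G_R$ of equal order are conjugate in $\Hol(G)$ to consolidate family~(1). There is no gap; the assembly is exactly the intended one.
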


\subsection{Proof of Theorem~\ref{ch6maintheo}}


The next result provides a necessary condition for a subgroup of $H$ to be regular.

\begin{guess}\label{ch6them41}
If $R$ is a regular subgroup of $H$, then up to conjugacy, $R$ has exactly one of the following types:
\begin{enumerate}
\item $R=G_{R}$; 
\item $R=(R\cap G_{R})\langle  a^{2^{t}}y^{\gamma}\rangle$ where $2^{t}< 4\gamma_{\bf2}$ and $1\leqslant \gamma\leqslant 2^{n-2}-1$;
\item $R=(R\cap G_{R})\langle  ax\rangle$;
\item $R=(R\cap G_{R})\langle  axy^{\gamma}\rangle$ where $1\leqslant \gamma\leqslant 2^{n-2}-1$;
\item $R=(R\cap G_{R})\langle  ax, a^{\epsilon}y^{\gamma}\rangle$ where $2\leqslant \epsilon_{\bf2}< 4\gamma_{\bf2}$ and $1\leqslant \gamma\leqslant 2^{n-2}-1$.
\end{enumerate}
\end{guess}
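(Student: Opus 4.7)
The plan is a case analysis based on the image $\bar R := RG_R/G_R$ inside $\Aut(G) = \langle x\rangle \times \langle y\rangle \cong Z_2 \times Z_{2^{n-2}}$. Since $R$ is regular, every non-identity element of $R$ must be semiregular, and Theorem~\ref{ch6them31} tells us that each element of $R\setminus G_R$ is $H$-conjugate either to $a^{2^t}y^\gamma$ with $2^t<4\gamma_{\bf 2}$ or to $axy^\gamma$ for some $\gamma$ (possibly zero). Lemma~\ref{ch6lem13}, which normalises $a^\alpha x^\beta y^\gamma$ to $a^{\alpha_{\bf 2}}x^\beta y^\gamma$ up to $H$-conjugacy, is then the main tool for replacing a chosen generator of $R$ by its canonical form by conjugating $R$ itself; the conjugating element lies in $\Aut(G)$, which normalises $G_R$ and hence preserves the characteristic subgroup $R\cap G_R$.

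The subgroups of $Z_2\times Z_{2^{n-2}}$ split into five types, which correspond to the five cases of the theorem: (i) trivial; (ii) a nontrivial subgroup of $\langle y\rangle$; (iii) $\langle x\rangle$; (iv) cyclic generated by $xy^\gamma$ with $\gamma\neq 0$; (v) non-cyclic. A useful preliminary observation is that any non-cyclic subgroup $K$ of $\langle x\rangle\times\langle y\rangle$ necessarily contains $x$: such a $K$ is an abelian $2$-group of rank at least $2$, so it has at least three involutions, which must be precisely the three involutions $x$, $y^{2^{n-3}}$, $xy^{2^{n-3}}$ of the ambient group, forcing $x\in K$. In cases (i)--(iv), $R$ is generated over $R\cap G_R$ by a single element $h$; combining semiregularity of $h$ (via Lemmas~\ref{ch6lem31}, \ref{ch6lem32}, \ref{ch6lem33}) with Lemma~\ref{ch6lem13} and a conjugation of $R$ brings $h$ into the stated canonical form, giving cases~2, 3, 4 respectively, while case~1 is immediate when $\bar R=1$.

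The main obstacle is case (v). By the preliminary observation $\bar R$ contains $x$, so we may choose generators $\bar h_1=x$ and $\bar h_2=y^\gamma$ of $\bar R$, lifting them to $h_1=a^\alpha x$ and $h_2=a^\epsilon y^\gamma$ in $R$; since $R/(R\cap G_R)\cong \bar R$, an order count gives $R=(R\cap G_R)\langle a^\alpha x,\,a^\epsilon y^\gamma\rangle$ directly (the rewriting Lemmas~\ref{ch6lem21}--\ref{ch6lem24} are available as a cleanup tool if the lift is presented in another form). Semiregularity of $a^\alpha x$ together with Lemmas~\ref{ch6lem32} and~\ref{ch6lem13} forces $\alpha$ odd, and a global conjugation of $R$ then brings $a^\alpha x$ to $ax$. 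Semiregularity of $a^\epsilon y^\gamma$ with Lemma~\ref{ch6lem31} (applied after Lemma~\ref{ch6lem13}) gives the upper bound $\epsilon_{\bf 2}<4\gamma_{\bf 2}$. The lower bound $\epsilon_{\bf 2}\geq 2$ is the most delicate point: if $\epsilon$ were odd, then Lemma~\ref{ch6lem12} would give $|a^\epsilon y^\gamma|=2^n$, so $\langle a^\epsilon y^\gamma\rangle$ would already be a cyclic regular subgroup of $H$ of order $2^n$ whose unique involution is $a^{2^{n-1}}\in G_R$; the additional element $ax\notin G_R$ then cannot lie in this cyclic group, forcing $|R|>2^n$ and contradicting regularity. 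Balancing the upper and lower bounds on $\epsilon_{\bf 2}$ and $\gamma_{\bf 2}$ while keeping track of the conjugations performed along the way is the delicate bookkeeping that makes case (v) the hardest step.
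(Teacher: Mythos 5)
Your proposal is correct, and its skeleton coincides with the paper's: pass to $RG_R/G_R\leqslant\Aut(G)=\langle x\rangle\times\langle y\rangle$, write $R$ as $R\cap G_R$ extended by one or two coset representatives, force those representatives to be semiregular and hence of the shapes listed in Theorem~\ref{ch6them31}, and normalise them by conjugation via Lemma~\ref{ch6lem13}. The genuine divergence is in the non-cyclic (two-generator) case, which is also where the paper does the most work. The paper takes arbitrary lifts $h_1,h_2$ and runs a subcase analysis on whether each involves $x$, repeatedly invoking the rewriting Lemmas~\ref{ch6lem21}, \ref{ch6lem22} and \ref{ch6lem24} to convert the generating pair into the form $\{a^{\xi}x,\,a^{M}y^{\gamma}\}$. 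You instead note that a non-cyclic subgroup of $Z_2\times Z_{2^{n-2}}$ contains all three involutions of the ambient group, hence contains $x$, so the image is $\langle x\rangle\times\langle y^{\gamma}\rangle$ and one may choose lifts already of the shapes $a^{\alpha}x$ and $a^{\epsilon}y^{\gamma}$; this makes the rewriting lemmas unnecessary for this theorem and is cleaner. Your argument for $\epsilon_{\bf 2}\geqslant 2$ (an odd $\epsilon$ makes $\langle a^{\epsilon}y^{\gamma}\rangle$ cyclic of order $2^{n}$, so $R$ would be cyclic, contradicting the presence of the second involution $ax$, equivalently the non-cyclicity of $R/(R\cap G_R)$) is the same in substance as the paper's terse appeal to ``$R/(R\cap G_R)$ is not cyclic,'' but you make it explicit. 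The only cost of your route is that the paper's rewriting lemmas are reused later in the development, so the authors' detour is not wholly wasted; as a self-contained proof of this particular theorem, yours is the shorter one.
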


\begin{proof} By the Second Isomorphism Theorem, we have that  
\begin{equation*}
\faktor{R}{R\cap G_{R}}\cong \faktor{RG_{R}}{G_{R}}\leqslant \faktor{H}{G_{R}}=\Aut(G)=\langle x\rangle \times\langle y\rangle.
\end{equation*}
Thus either $R=G_{R}$, or $R=\langle R\cap G_{R}, h\rangle$ or $R=\langle R\cap G_{R}, h_{1}, h_{2}\rangle$ where $h, h_{1}, h_{2}\notin G_R$. 

Suppose first that $R=\langle R\cap G_{R}, h\rangle$. Since $R$ is semiregular, so is $h$ and so Theorem~\ref{ch6them31} implies that up to conjugacy 
\begin{enumerate}
\item $h=a^{2^{t}}y^{\gamma}$ with $2^{t}<4\gamma_{\bf2}$ and $1\leqslant \gamma\leqslant 2^{n-2}-1$;
\item $h=axy^{\gamma}$ with $0\leqslant \gamma\leqslant 2^{n-2}-1$.
\end{enumerate}
If $h=a^{2^{t}}y^{\gamma}$ where $2^{t}<4\gamma_{\bf2}$ with $1\leqslant \gamma\leqslant 2^{n-2}-1$, then we get case (2) of this theorem. Case (3) and case (4) of this theorem follow if $h=axy^{\gamma}$ with $0\leqslant \gamma\leqslant 2^{n-2}-1$. Thus it is left to consider the case where $R=\langle R\cap G_{R}, h_{1}, h_{2}\rangle$ where $h, h_{1}, h_{2}\notin G_R$ and in particular $R/(R\cap G_{R})$ is not cyclic.

Suppose that $h_{1}=a^{\alpha_{1}}x$ and $h_{2}=a^{\alpha_{2}}x^{\beta_{2}}y^{\gamma_{2}}$. Since $R$ is regular, Theorem~\ref{ch6them31} implies that $\alpha_{1}$ is odd. If $\beta_{2}=0$, then using the fact that $R/(R\cap G_{R})$ is not cyclic, $R$ is conjugate to the case (5) of this theorem. If $\beta_{2}=1$, then by Theorem~\ref{ch6them31} we have that $\alpha_{2}$ is also odd. Thus  Lemma~\ref{ch6lem22} implies that $R=\langle R\cap G_{R}, a^{\xi}y^{\gamma_{2}}, a^{\alpha_{2}}xy^{\gamma_{2}}\rangle$, and Lemma~\ref{ch6lem31} implies that $2\leqslant \xi\leqslant 4(\gamma_{2})_{\bf2}$ as $R$ is regular.  Further by Lemma~\ref{ch6lem21}, $R$ either conjugates to $\langle R\cap G_{R}, a^{\xi'}x, a^{\xi}y^{\gamma_{2}}\rangle$ which implies that $R$ is conjugate to case (5), or $R$ conjugates to $\langle R\cap G_{R}, a^{\alpha_{2}}xy^{\gamma_{2}}\rangle$, which contradicts  $R/(R\cap G_{R})$ being cyclic. 

Finally suppose that $h_{1}=a^{\alpha_{1}}x^{\beta_{1}}y^{\gamma_{1}}$ and $h_{2}=a^{\alpha_{2}}x^{\beta_{2}}y^{\gamma_{2}}$ with $\gamma_{1}\gamma_{2}\neq 0$. If $\beta_{1}+\beta_{2}=1$, say $\beta_{1}=0$ and $\beta_{2}=1$, then since $R/(R\cap G_{R})$ is not cyclic, by Lemma~\ref{ch6lem21} we have that up to conjugacy $R=\langle R\cap G_{R}, a^{\xi}x, a^{\alpha_{1}}y^{\gamma}\rangle$ for some $\xi$. Again $\xi$ is odd as $R$ is regular. Thus $R$ is conjugate to case (5) of the theorem. Now suppose that $\beta_{1}+\beta_{2}=0$. Since $R/(R\cap G_{R})$ is not cyclic, we have that $\beta_{1}=\beta_{2}=1$. Then by Lemma~\ref{ch6lem24} and using the fact that $R/(R\cap G_{R})$ is cyclic, we have that up to conjugacy $R=\langle R\cap G_{R}, a^{\xi}x, a^{M}y^{\gamma}\rangle$ for some $\xi, M$ and $\gamma$. Thus $R$ is conjugate to (5) of the theorem. \end{proof}

Observe that if $R=G_{R}$, then $R$ is regular on $G$. Thus to complete the classification of the regular subgroups of $H$ it remains to check which subgroups of types $(2)-(5)$ in Theorem \ref{ch6them41} are regular. Since in these cases $R\cap G_{R}<G_{R}$, we assume from now on that $R\cap G_{R}=\langle a^{\alpha}\rangle$ with $\alpha$ even.

\bigskip 

{\bf Type (2).}

Let $R\leqslant H$ be of type $(2)$, that is, $R=(R\cap G_{R})\langle a^{2^{t}}y^{\gamma}\rangle$ where $2^{t}< 4\gamma_{\bf2}$ and $1\leqslant \gamma\leqslant 2^{n-2}-1$. 

\begin{llemma}\label{ch6lem411}
If $R$ is regular, then $R=\langle ay^{\gamma}\rangle$.
\end{llemma}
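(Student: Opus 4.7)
The plan is to use the transitivity of $R$ on $G$ to force $t=0$, after which $R=\langle ay^\gamma\rangle$ follows immediately by a cardinality argument. Since $R$ is regular, $|R|=2^n$ and $R$ is transitive on $G$; I will exploit the transitivity first.

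I will begin by writing every element of $R$ in a useful normal form. Because $G_R\trianglelefteq H$, we have $R\cap G_R\trianglelefteq R$, and the quotient $R/(R\cap G_R)$ is cyclic, generated by the image of $a^{2^t}y^\gamma$. Consequently every element of $R$ can be written as $a^{\alpha j}\cdot(a^{2^t}y^\gamma)^\ell$ for some integers $j,\ell$, where $R\cap G_R=\langle a^\alpha\rangle$ with $\alpha$ even (we may take $\alpha=2^n$ when $R\cap G_R$ is trivial). Applying Lemma~\ref{ch6lem11}(1) gives $(a^{2^t}y^\gamma)^\ell=a^{N}y^{\ell\gamma}$ with $N=2^t(1-5^{-\ell\gamma})/(1-5^{-\gamma})$, and Lemma~\ref{ch6lem01} then yields $N_{\bf 2}=2^t\ell_{\bf 2}$. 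Hence any element of $R$ has the form $a^{\alpha j+N}y^{\ell\gamma}$, and its image of ${\bf 1}\in G$ is
\[
{\bf 1}^{a^{\alpha j+N}y^{\ell\gamma}}=a^{(\alpha j+N)\cdot 5^{\ell\gamma}}.
\]

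The critical observation is that if $t\geq 1$, then both $\alpha j$ and $N$ are even, while $5^{\ell\gamma}$ is always odd; so every image of ${\bf 1}$ lies in $\langle a^2\rangle$, which is a proper subgroup of $G=\langle a\rangle$ that does not contain $a$. This contradicts the transitivity of $R$, and so we must have $t=0$.

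Once $t=0$ is established, the $a$-coefficient of $ay^\gamma$ is odd, so Lemma~\ref{ch6lem12}(1) gives $|ay^\gamma|=2^n$. Then $\langle ay^\gamma\rangle\leq R$ is a cyclic subgroup of order $2^n=|R|$, forcing $R=\langle ay^\gamma\rangle$. The only technical step is the $2$-adic valuation bookkeeping for the $a$-exponent of an arbitrary product in $R$, and this is exactly what Lemma~\ref{ch6lem01} is designed to handle; no further work is required in the corner case $R\cap G_R=\{{\bf 1}\}$ since there $|R|=|a^{2^t}y^\gamma|=2^{n-t}$ directly forces $t=0$.
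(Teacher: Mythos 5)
Your proposal is correct and follows essentially the same route as the paper: both arguments use regularity (transitivity) to show that for $t\geqslant 1$ every element of $R$ sends its base point into a single coset of $\langle a^2\rangle$ (you track the orbit of $\bf 1$ via a normal form for elements of $R$; the paper equivalently tracks the orbit of $a$ and notes the exponents stay odd), forcing $t=0$, and then both conclude with Lemma~\ref{ch6lem12} giving $|ay^{\gamma}|=2^{n}=|R|$. The extra $2$-adic bookkeeping via Lemma~\ref{ch6lem01} is harmless but not needed, since $N=2^{t}\bigl(\tfrac{1-5^{-\ell\gamma}}{1-5^{-\gamma}}\bigr)$ is visibly even once $t\geqslant 1$.
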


\begin{proof} Assume that $t\geqslant 1$. Thus since $\alpha$ is even, we have that $a^{a^{\alpha}}=a^{1+\alpha}$ where $1+\alpha$ is odd. Similarly $a^{a^{2^{t}}y^{\gamma}}=a^{(1+2^{t})\cdot5^{\gamma}}$ where $(1+2^{t})\cdot5^{\gamma}$ is odd. Thus $a^{R}\subseteq \{a^{\epsilon}|1\leqslant \epsilon\leqslant 2^{n-1} \ \textrm{where $\epsilon$ is odd}\}$, and so $R$ is not transitive on $G$, which leads to a contradiction. Hence $t=0$, that is, $R=(R\cap G_{R})\langle ay^{\gamma}\rangle$. Therefore $R=\langle ay^{\gamma}\rangle$ as Lemma~\ref{ch6lem12} implies that $|ay^{\gamma}|=2^{n}$.  \end{proof}

\begin{guess}\label{ch6them411}
$R$ is regular if and only if $R$ is conjugate to $\langle ay^{2^{t}}\rangle$ for some $0\leqslant t\leqslant  n-3$ ($n\geqslant 3$).  Further $R\cap G_{R}=\langle a^{2^{n-2-t}}\rangle$.
\end{guess}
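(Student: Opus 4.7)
The plan is to combine Lemma~\ref{ch6lem411} with a conjugation reduction to $\gamma=2^{t}$, then verify regularity using the semiregularity classification, and finally compute $R\cap G_{R}$ directly from the generator.

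By Lemma~\ref{ch6lem411}, any regular $R$ of type~(2) equals $\langle ay^{\gamma}\rangle$ with $1\leqslant \gamma\leqslant 2^{n-2}-1$. Writing $\gamma=2^{t}\gamma'$ with $\gamma'$ odd, the constraint $1\leqslant 2^{t}\leqslant 2^{n-3}$ gives $0\leqslant t\leqslant n-3$, and the upper bound being nontrivial forces $n\geqslant 3$. To reduce to $\gamma=2^{t}$ I would pick an odd $k$ with $k\gamma'\equiv 1\Mod{2^{n-2-t}}$, so that $k\gamma\equiv 2^{t}\Mod{2^{n-2}}$. By Lemma~\ref{ch6lem11}(1), $(ay^{\gamma})^{k}=a^{M}y^{2^{t}}$ with $M=(1-5^{-k\gamma})/(1-5^{-\gamma})$, and Lemma~\ref{ch6lem01} yields $M_{\bf 2}=k_{\bf 2}=1$. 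Since $|ay^{\gamma}|=2^{n}$ by Lemma~\ref{ch6lem12}(1) and $k$ is odd, this element still generates $R$, so $R=\langle a^{M}y^{2^{t}}\rangle$; Lemma~\ref{ch6lem13} then conjugates $a^{M}y^{2^{t}}$ (with $M$ odd) to $ay^{2^{t}}$, giving the desired conjugacy of cyclic subgroups in $H$.

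For the converse direction, I would check that $\langle ay^{2^{t}}\rangle$ is actually regular whenever $0\leqslant t\leqslant n-3$. Its order is $2^{n}=|G|$ by Lemma~\ref{ch6lem12}(1), and the generator $ay^{2^{t}}$ is semiregular by Lemma~\ref{ch6lem31} since $2^{0}<4\cdot 2^{t}$ holds trivially. A semiregular cyclic group of order $|G|$ acts regularly on $G$.

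Finally, to compute $R\cap G_{R}$ I would take the smallest positive $r_{0}$ for which the $y$-component of $(ay^{2^{t}})^{r_{0}}$ vanishes, namely $r_{0}=2^{n-2-t}$. Then Lemma~\ref{ch6lem11}(1) together with Lemma~\ref{ch6lem01} yields $(ay^{2^{t}})^{r_{0}}=a^{M_{0}}$ with $(M_{0})_{\bf 2}=2^{n-2-t}$, so $\langle a^{M_{0}}\rangle=\langle a^{2^{n-2-t}}\rangle$ inside $G_{R}$. Since $\langle a^{2^{n-2-t}}\rangle$ is characteristic in the cyclic group $G_{R}$, this identification survives the earlier conjugation. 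The main obstacle is purely bookkeeping of $2$-adic valuations through repeated invocations of Lemma~\ref{ch6lem01}; no deeper idea is involved.
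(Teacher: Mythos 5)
Your proof is correct and follows essentially the same route as the paper: Lemma~\ref{ch6lem411} to reduce to $\langle ay^{\gamma}\rangle$, then an odd power of the generator combined with Lemmas~\ref{ch6lem01} and~\ref{ch6lem13} to conjugate to $\langle ay^{\gamma_{\bf 2}}\rangle$, and the same valuation computation for $R\cap G_{R}$. The only differences are cosmetic --- you run the odd-power reduction in the opposite direction (from $ay^{\gamma}$ down to $ay^{2^{t}}$ rather than from $ay^{\gamma_{\bf 2}}$ up to $ay^{\gamma}$) and you spell out the sufficiency direction (semiregular generator of order $2^{n}$ implies regular), which the paper leaves implicit.
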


\begin{proof} By Lemma~\ref{ch6lem411}, it remains to show that $\langle ay^{\gamma}\rangle$ is conjugate to $\langle ay^{2^{t}}\rangle$ for some $0\leqslant t\leqslant n-3$. 

Write $\gamma=\gamma_{\bf 2}\gamma_{{\bf2}'}$ and consider $(ay^{\gamma_{\bf 2}})^{\gamma_{{\bf2}'}}$. Then by Lemma~\ref{ch6lem11}
\begin{align*}
(ay^{\gamma_{\bf 2}})^{\gamma_{{\bf2}'}} & = a^{\frac{1-5^{-\gamma_{\bf 2}\gamma_{{\bf2}'}}}{1-5^{-\gamma_{\bf 2}}}}y^{\gamma_{\bf 2}\gamma_{{\bf2}'}}\\
   & =  a^{\frac{1-5^{-\gamma_{\bf 2}\gamma_{{\bf2}'}}}{1-5^{-\gamma_{\bf 2}}}}y^{\gamma}.
\end{align*}

By Lemma~\ref{ch6lem01}, since $\gamma_{{\bf2}'}$ is odd, we have that $(\frac{1-5^{-\gamma_{\bf 2}\gamma_{{\bf2}'}}}{1-5^{-\gamma_{\bf 2}}})_{\bf 2}=1$. Thus by Lemma~\ref{ch6lem13}, $(ay^{\gamma_{\bf 2}})^{\gamma_{{\bf2}'}} $ is conjugate to $ay^{\gamma}$, and so $\langle (ay^{\gamma_{\bf 2}})^{\gamma_{{\bf2}'}} \rangle$ is conjugate to $\langle ay^{\gamma}\rangle$. By Lemma~\ref{ch6lem12}, $\langle ay^{\gamma_{\bf2}}\rangle\cong Z_{2^{n}}$ and since $\gamma_{{\bf2}'}$ is odd, we have that $\langle ay^{\gamma_{\bf2}}\rangle=\langle (ay^{\gamma_{\bf2}})^{\gamma_{{\bf2}'}}\rangle$. Hence $\langle ay^{\gamma}\rangle$ is conjugate to $\langle ay^{\gamma_{\bf 2}}\rangle$. Note that Lemma~\ref{ch6lem11} implies that $(ay^{2^{t}})^{2^{n-2-t}}=a^{M}$, with $M=\frac{1-5^{-2^{n-2}}}{1-5^{-2^{t}}}$. Thus $\langle a^{M}\rangle\leqslant R\cap G_{R}$. Notice that for each $u\in R\cap G_{R}$, we have that $u=(ay^{2^{t}})^{\ell}$ such that $2^{n-2-t}\mid \ell$, and so $u\in \langle a^{M}\rangle$. Hence $R\cap G_{R}=\langle a^{M}\rangle$. By Lemma~\ref{ch6lem01} we have that $M_{\bf2}=2^{n-2-t}$, and so $R\cap G_{R}=\langle a^{2^{n-2-t}}\rangle$. \end{proof} 


\bigskip

{\bf Type (3).}

\begin{guess}\label{ch6them421}
Suppose that $R=(R\cap G_{R})\langle ax\rangle$. Then $R$ is regular if and only if $R=\langle a^{2}, ax\rangle$. In this case, $R\cong D_{2^{n}}$.
\end{guess}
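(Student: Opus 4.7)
The plan is straightforward because the heavy lifting has been done by the semiregularity classification of Theorem~\ref{ch6them31}. First I would pin down the order of $R\cap G_R$. Since $R=(R\cap G_R)\langle ax\rangle$ and $ax$ projects nontrivially onto $\Aut(G)=H/G_R$, the index $|R:R\cap G_R|$ equals the order of the image of $\langle ax\rangle$ in $\Aut(G)$, namely $2$. Assuming $R$ is regular on $G$ forces $|R|=2^n$, hence $|R\cap G_R|=2^{n-1}$; together with the standing assumption that $R\cap G_R=\langle a^\alpha\rangle$ with $\alpha$ even, this yields $\alpha=2$ and thus $R=\langle a^2, ax\rangle$.

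Next I would verify that $\langle a^2, ax\rangle$ really is regular on $G$. Its order is $2^n=|G|$, so it suffices to show that every nontrivial element of $R$ is semiregular: then the orbits of $R$ on $G$ all have size $|R|=|G|$, forcing $R$ to be transitive and hence regular. The nontrivial elements split into two families. Powers $a^{2k}\in G_R$ are automatically semiregular, since $G_R$ acts regularly on $G$. Elements $a^{2k+1}x$ with odd exponent on $a$ are, by Lemma~\ref{ch6lem13}, conjugate in $H$ to $a^{1}x=ax$, and Lemma~\ref{ch6lem32} asserts that $ax$ is semiregular. Since semiregularity is preserved under conjugation, every element of $R$ is semiregular, so $R$ is regular, as claimed.

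Finally I would identify the isomorphism type. The element $a^2\in G_R$ has order $2^{n-1}$, and $ax$ has order $2$ by Lemma~\ref{ch6lem11}(2). Using the commutation rule $xa=a^{-1}x$ in $H$, which comes from $a^x=a^{-1}$, a direct computation gives
\[
(ax)(a^2)(ax)=a(xa^2)x=a(a^{-2}x)x=a^{-1}x\cdot x\cdot\text{(after reassociation)}=a^{-2},
\]
so conjugation by the involution $ax$ inverts $a^2$. These are precisely the defining relations of the dihedral group of order $2^n$ in the paper's notation ($\sigma^\tau=\sigma^{-1}$ with $\sigma=a^2$ and $\tau=ax$), and since $|R|=2^n$ we conclude $R\cong D_{2^n}$.

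There is no real obstacle here; the whole argument is bookkeeping on top of previous lemmas. The only subtlety worth noting is keeping the correct direction in the commutation $xa=a^{-1}x$ so that the final conjugation computation lands on $a^{-2}$ rather than on $a^2$. All nontrivial work has already been absorbed into Lemmas~\ref{ch6lem11}, \ref{ch6lem13} and \ref{ch6lem32}.
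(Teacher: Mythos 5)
Your argument is correct, and for the ``only if'' direction and the identification of the isomorphism type it essentially coincides with the paper's: both reduce to $|R\cap G_R|=2^{n-1}$ (you via the index $|R:R\cap G_R|=2$ coming from the image of $ax$ in $\Aut(G)$, the paper via the order formula $|\langle ax,a^{\epsilon}\rangle|=2\cdot 2^{n}/\epsilon_{\bf2}$), and both verify $(ax)a^{2}(ax)=a^{-2}$ to get $D_{2^{n}}$. Where you genuinely diverge is in showing that $\langle a^{2},ax\rangle$ really is regular. The paper argues transitivity directly: $\langle a^{2}\rangle$ has the two orbits $O_{1}$ (odd powers of $a$) and $O_{2}$ (even powers), $a^{\langle a^{2}\rangle}=O_{1}$ and $a^{ax}=a^{-2}\in O_{2}$, so the group is transitive and, having order $|G|$, regular. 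You instead check that every nontrivial element of $R$ is semiregular --- elements of $\langle a^{2}\rangle$ because $G_R$ is regular, and elements $a^{2k+1}x$ because they are conjugate to $ax$ by Lemma~\ref{ch6lem13} while $ax$ is semiregular by Lemma~\ref{ch6lem32} --- and conclude regularity from semiregularity together with $|R|=|G|$. Both routes are sound; yours leans on the semiregularity classification already assembled in Section~3.2 and is the pattern that would also dispatch Types (4) and (5), whereas the paper's orbit argument is self-contained and does not need Lemma~\ref{ch6lem32}. One small transcription slip: in your displayed conjugation the final factor $ax$ loses its $a$; the computation should read $(ax)(a^{2})(ax)=a(xa^{3})x=a\,a^{-3}xx=a^{-2}$, but the stated outcome $a^{-2}$ is correct, so nothing downstream is affected.
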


\begin{proof} {\it (Sufficiency.)} Since $|ax|=2$, we have that $\langle ax\rangle\cap G_{R}={\bf1}$. Thus 
$$
|\langle ax, a^{\epsilon}\rangle|=\frac{2\cdot 2^{n}}{\epsilon_{\bf2}}
$$
and so if $R$ is regular we have that $\epsilon_{\bf2}=2$. Notice that $\langle a^{2\epsilon_{{\bf2}'}}\rangle =\langle a^{2}\rangle$, and so we may let $\epsilon=2$. Moreover, since $|a^{2}|=2^{n-1}$, and 
\begin{equation*}
ax(a^{2})ax=axa^{3}x=aa^{-3}=a^{-2},
\end{equation*}
we have that $R=\langle a^{2}, ax\rangle\cong D_{2^{n}}$. 

{\it (Necessity.)} It is not hard to show that $\langle a^{2}\rangle$ has two orbits on $G$, which are $O_{1}=\{a^{\epsilon}| \epsilon \text{ is odd} \}$ and $O_{2}=\{a^{\epsilon}| \epsilon \text{ is even}\}$. Observe that $a^{\langle a^{2}\rangle}=O_{1}$.  Since $a^{ax}=a^{-2}\in O_{2}$, and $\langle a^{2}\rangle$ is transitive on $O_{2}$, we may conclude that $\langle a^{2}, ax\rangle$ is transitive on $G$, which by comparing orders implies that $\langle a^{2}, ax\rangle$ is regular. Therefore, $R$ is regular if and only if $R=\langle a^{2}, ax\rangle$.   \end{proof}

\bigskip

{\bf Type (4).}

\begin{guess}\label{ch6them431}
Suppose that $R=(R\cap G_{R})\langle axy^{\gamma}\rangle$ with $1\leqslant  \gamma\leqslant  2^{n-2}-1$ ($n\geqslant 3$). Then $R$ is regular  if and only if
$R=\langle a^{2}, axy^{2^{n-3}}\rangle$. In this case, $R$ is isomorphic to $Q_{2^{n}}$.
\end{guess}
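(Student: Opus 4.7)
The plan is to characterize when $R=\langle a^{\alpha},axy^{\gamma}\rangle$ (with $\alpha$ even and $R\cap G_R=\langle a^{\alpha}\rangle$) is regular by checking the condition $R\cap H_{\bf 1}={\bf 1}$, where $H_{\bf 1}=\langle x,y\rangle$ is the point stabilizer of ${\bf 1}$. Since the action is on $G$ of size $2^n$, $R$ is regular if and only if $|R|=2^n$ and $R\cap H_{\bf 1}={\bf 1}$. To fix $|R|$: modulo $G_R$, the element $axy^{\gamma}$ corresponds to $xy^{\gamma}\in\Aut(G)$, which has order $2^{n-2}/\gamma_{\bf 2}$ since $x$ is an involution commuting with $y^{\gamma}$. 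Hence $|R|=(2^n/\alpha_{\bf 2})(2^{n-2}/\gamma_{\bf 2})$, and $|R|=2^n$ forces $\alpha_{\bf 2}\gamma_{\bf 2}=2^{n-2}$.

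For necessity, I will assume $R$ is regular and deduce $\gamma_{\bf 2}=2^{n-3}$; since $\gamma\leqslant 2^{n-2}-1$ already forces $\gamma_{\bf 2}\leqslant 2^{n-3}$, equality here pins $\gamma=2^{n-3}$ and $\alpha_{\bf 2}=2$, giving $\langle a^{\alpha}\rangle=\langle a^2\rangle$ and $R=\langle a^2,axy^{2^{n-3}}\rangle$. The argument is by contradiction: suppose $\gamma_{\bf 2}<2^{n-3}$ and construct a nontrivial element of $R\cap H_{\bf 1}$. A general element of $R$ has the form $a^{k\alpha}(axy^{\gamma})^j$. By Lemma~\ref{ch6lem11}(3), for odd $j$ the $a$-exponent $k\alpha+M'_j$ with $M'_j=\sum_{s=0}^{j-1}(-1)^s 5^{-s\gamma}$ is even plus a sum of an odd number of odd terms, hence odd, so cannot be $\equiv 0\pmod{2^n}$; thus no odd-$j$ element lies in $H_{\bf 1}$. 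For even $j=2i$ the element is $a^{k\alpha+M_{2i}}y^{2i\gamma}$, and by Lemma~\ref{ch6lem01} we have $(M_{2i})_{\bf 2}=4i_{\bf 2}\gamma_{\bf 2}$. It can be placed in $H_{\bf 1}\setminus\{{\bf 1}\}$ (for some choice of $k$) precisely when $\alpha_{\bf 2}\mid M_{2i}$ and $y^{2i\gamma}\neq{\bf 1}$. Setting $\gamma_{\bf 2}=2^s$ (so $s<n-3$) and $i=2^m$, these two conditions translate to $n-4-2s\leqslant m\leqslant n-4-s$, a nonempty range of nonnegative integers since $s\leqslant n-4$. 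Any such $m$ yields a nontrivial element of $R\cap H_{\bf 1}$, contradicting regularity; hence $\gamma_{\bf 2}=2^{n-3}$.

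For sufficiency with $R=\langle a^2,axy^{2^{n-3}}\rangle$, the odd-$j$ parity argument still eliminates all odd-exponent elements, and for even $j=2i$ we have $y^{2i\cdot 2^{n-3}}=y^{i\cdot 2^{n-2}}={\bf 1}$, forcing any element of $H_{\bf 1}$ to have its $a$-part also vanish and hence be the identity; together with $|R|=2^n$ this gives regularity. For the isomorphism $R\cong Q_{2^n}$, set $\sigma=a^2$ of order $2^{n-1}$ and $\tau=axy^{2^{n-3}}$ of order $4$ (by Lemma~\ref{ch6lem12}). Lemma~\ref{ch6lem00} gives $5^{\pm 2^{n-3}}\equiv 1\pmod{2^{n-1}}$, so $\sigma^{\tau}=a^{-2\cdot 5^{-2^{n-3}}}=a^{-2}=\sigma^{-1}$, and Lemma~\ref{ch6lem01} gives $(1-5^{-2^{n-3}})_{\bf 2}=2^{n-1}$, so $\tau^2=a^{1-5^{-2^{n-3}}}=a^{2^{n-1}}=\sigma^{2^{n-2}}$. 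These are exactly the defining relations of $Q_{2^n}$. The main obstacle is the necessity step: choosing $i=2^m$ so that the $2$-adic valuations of $M_{2i}$ and $\alpha_{\bf 2}$ match while $y^{2i\gamma}$ remains nontrivial. Once that parameter is correctly set up, every remaining verification reduces to a direct calculation with the established lemmas.
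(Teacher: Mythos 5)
Your proposal is correct and follows essentially the same route as the paper: the order count forces $\alpha_{\bf 2}\gamma_{\bf 2}=2^{n-2}$, the contradiction when $\gamma_{\bf 2}<2^{n-3}$ is obtained by exhibiting $y^{2^{n-3}}$ as a nontrivial element of $R$ fixing $\bf 1$ (your choice $i=2^{m}$ with $m=n-4-s$ is exactly the paper's power $r=|axy^{\gamma}|/4$), and the $Q_{2^{n}}$ relations are verified identically via Lemmas~\ref{ch6lem00} and~\ref{ch6lem01}. The only differences are cosmetic: you certify regularity of $\langle a^{2},axy^{2^{n-3}}\rangle$ by showing $R\cap H_{\bf 1}={\bf 1}$ and invoking orbit--stabiliser, where the paper argues transitivity directly, and you treat $n=3$ uniformly rather than as a separate case.
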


\begin{proof} {\it (Sufficiency.)}  Recall that $R\cap G_{R}=\langle a^{\alpha}\rangle$ with $\alpha$ even. 

Suppose first that $n=3$. Then $G=Z_{8}$ and $R=\langle a^{\alpha}, axy\rangle$. Since $\alpha$ is even, $\langle a^{\alpha}, axy\rangle\leqslant  \langle a^{2}, axy\rangle$ and equality holds if and only if $\alpha_{\bf2}=2$. Since $|a^{2}|=4$, $(axy)^{2}=a^{4}$ and $(axy)^{-1}a^{2}axy=a^{-2}$, we have that $\langle a^{2}, axy\rangle\cong Q_{8}$.  Since $(axy)^{2}=a^{4}$, any element of $\langle a^{2}, axy\rangle\backslash \langle a^{2}\rangle$ is of the form $a^{\epsilon}xy$ with $\epsilon$ odd, and so is conjugate to $axy$ in $\Hol(G)$. Thus $\langle a^{2}, axy\rangle$  is regular as $|\langle a^{2}, axy\rangle|=2^{3}$. Hence when $n=3$, $R$ is regular if and only if $R=\langle a^{2}, axy\rangle$.

Now suppose that $n>3$. By Lemma~\ref{ch6lem11},  $(axy^{\gamma})^{\frac{2^{n-3}}{\gamma_{\bf2}}}\notin G_R$ and $(axy^{\gamma})^{\frac{2^{n-2}}{\gamma_{\bf2}}}\in G_R$, and 
\begin{align*}
(axy^{\gamma})^{\frac{2^{n-2}}{\gamma_{\bf2}}} & = a^{\frac{1-5^{-\gamma\cdot\frac{2^{n-2}}{\gamma_{\bf2}}}}{1+5^{-\gamma}}}y^{\gamma\cdot\frac{2^{n-2}}{\gamma_{\bf2}}} \\
    & = a^{2^{n-1}},
\end{align*}
we have that $\langle axy^{\gamma}\rangle\cap G_R=\langle a^{2^{n-1}}\rangle$. Since $R$ is regular and $R=\langle a^{\alpha}, axy^{\gamma}\rangle=\langle a^{\alpha}\rangle\langle axy^{\gamma}\rangle$, we have that 
\begin{align*}
2^{n}=|\langle a^{\alpha}\rangle\langle axy^{\gamma}\rangle| & = \frac{(2^{n}/\alpha_{\bf2})(2^{n-1}/\gamma_{\bf2})}{2} \\
  & = \frac{2^{2n-2}}{\alpha_{\bf2}\gamma_{\bf2}}.
\end{align*} 
Hence $\alpha_{\bf2}\gamma_{\bf2}=2^{n-2}$. Moreover, by Lemma~\ref{ch6lem12}, $|axy^{\gamma}|=\frac{2^{n-1}}{\gamma_{\bf2}}$. 

Suppose that $\gamma_{\bf2}\leqslant 2^{n-4}$. Let $r=\frac{|axy^{\gamma}|}{4}$, and so $r$ is even and $(axy^{\gamma})^{r}=a^{M}y^{2^{n-3}}$ where $M=\frac{1-5^{-r\gamma}}{1+5^{-\gamma}}$. Since $M_{\bf2}=2\gamma_{\bf2}r_{\bf2}=2\cdot \frac{2^{n-1}}{4\gamma_{\bf2}}\cdot \gamma_{\bf2}=2^{n-2}$, we have that $(axy^{\gamma})^{r}=a^{2^{n-2}M_{{\bf2}'}}y^{2^{n-3}}$. Notice that $a^{2^{n-2}M_{{\bf2}'}}\in \langle a^{\alpha}\rangle=R\cap G_{R}$ as $2\leqslant \alpha_{\bf2}\leqslant 2^{n-2}$. This implies that $y^{2^{n-3}}\in R$, and so $R$ is not regular. Thus $\gamma_{\bf2}=2^{n-3}$ and $\alpha_{\bf2}=2$. Hence $R=\langle a^{2}, axy^{2^{n-3}}\rangle$ as required.

Since $(axy^{2^{n-3}})^{2}=a^{2^{n-1}}=(a^{2})^{2^{n-2}}$, and 
\begin{align*}
(axy^{2^{n-3}})^{-1}a^{2}(axy^{2^{n-3}}) & = (xy^{-2^{n-3}}a^{-1})a^{2}(axy^{2^{n-3}}) \\
     & =a^{5^{2^{n-3}}}xy^{-2^{n-3}}a^{2}axy^{2^{n-3}}\\
     & = a^{5^{2^{n-3}}}xa^{3\cdot5^{2^{n-3}}}x\\
     & = a^{-2\cdot5^{2^{n-3}}}.
\end{align*}
By Lemma~\ref{ch6lem00}, we have that $5^{2^{n-3}}\equiv 1 \Mod {2^{n-1}}$, and so $5^{2^{n-3}}=2^{n-1}t+1$ and $-2\cdot5^{2^{n-3}}=-2(2^{n-1}t+1)\equiv -2 \Mod {2^{n}}$. Thus $(axy^{2^{n-3}})^{-1}a^{2}(axy^{2^{n-3}})=a^{-2}$. Since $(a^{2})^{2^{n-1}}=1$ and $(axy^{2^{n-3}})^{4}=1$, we have that $R=\langle a^{2}, axy^{2^{n-3}}\rangle\cong Q_{2^{n}}$.

{\it (Necessity.)} As seen in Type (3), we have that $\langle a^{2}\rangle$ has two orbits $O_{1}$ and $O_{2}$ on $G$ and $a^{\langle a^{2}\rangle}=O_{1}$. Since $a^{axy^{2^{n-3}}}=a^{-2^{n-2}}\in O_{2}$ and $\langle a^{2}\rangle$ is transitive on $O_{2}$, we have that $\langle a^{2}, axy^{2^{n-3}}\rangle$ is transitive on $G$, which by comparing orders implies that $\langle a^{2}, axy^{2^{n-3}}\rangle$ is regular.  \end{proof}

\bigskip

{\bf Type (5).}

Suppose that $R=(R\cap G_{R})\langle a^{\epsilon}y^{\gamma}, ax\rangle$ with $2\leqslant  \epsilon_{\bf2}\leqslant  4\gamma_{\bf2}$ and  $1\leqslant  \gamma\leqslant  2^{n-2}-1$ ($n\geqslant 3$) and $R\cap G_{R}=\langle a^{\alpha}\rangle=\langle a^{\alpha_{\bf2}}\rangle$ with $2\leqslant  \alpha_{2}\leqslant  2^{n-1}$. Since $R/(R\cap G_{R})$ is isomorphic to a subgroup of $\Aut(G)$ and $\Aut(G)$ is abelian, we have that $\langle a^{\alpha_{\bf2}}, a^{\epsilon}y^{\gamma}\rangle\lhd R$. Let $R_{1}=\langle a^{\alpha_{\bf2}}, a^{\epsilon}y^{\gamma}\rangle$. Since $\langle ax\rangle\cap R_{1}=\bf 1$, we have that $R=R_{1}\rtimes \langle ax\rangle$.

\begin{llemma}\label{ch6lem441}
Suppose that $R$ is regular. Then $R_{1}=\langle a^{\epsilon}y^{\gamma}\rangle$ and $\epsilon_{\bf 2}=2$.
\end{llemma}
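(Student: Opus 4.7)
My plan is to combine an order count with the regularity condition $R \cap \mathrm{Aut}(G) = \{{\bf 1}\}$, which is equivalent to $R$ being regular given $|R|=2^n$ (since $\mathrm{Aut}(G)$ is the point stabiliser in $\mathrm{Hol}(G)$). Since $R = R_1 \rtimes \langle ax\rangle$ with $|ax|=2$, regularity immediately forces $|R_1| = 2^{n-1}$. I would first identify $\langle a^{\epsilon}y^{\gamma}\rangle \cap G_R$: Lemma~\ref{ch6lem11}(1) shows that the smallest power of $a^{\epsilon}y^{\gamma}$ landing in $G_R$ is $(a^{\epsilon}y^{\gamma})^{2^{n-2}/\gamma_{\bf 2}}$, and Lemma~\ref{ch6lem01} evaluates this to $a^{M}$ with $M_{\bf 2} = \epsilon_{\bf 2}\cdot 2^{n-2}/\gamma_{\bf 2}$, so $\langle a^{\epsilon}y^{\gamma}\rangle \cap G_R = \langle a^{\epsilon_{\bf 2}\cdot 2^{n-2}/\gamma_{\bf 2}}\rangle$. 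Because this sits inside $R_1 \cap G_R = \langle a^{\alpha_{\bf 2}}\rangle$, it coincides with $\langle a^{\alpha_{\bf 2}}\rangle \cap \langle a^{\epsilon}y^{\gamma}\rangle$, of order $4\gamma_{\bf 2}/\epsilon_{\bf 2}$. Applying Lemma~\ref{ch6lem12}(1) (with $\epsilon_{\bf 2}\leqslant 4\gamma_{\bf 2}$) for $|a^{\epsilon}y^{\gamma}|=2^n/\epsilon_{\bf 2}$ and substituting into the product formula gives $|R_1| = 2^{2n-2}/(\alpha_{\bf 2}\gamma_{\bf 2})$, producing the key relation $\alpha_{\bf 2}\gamma_{\bf 2}=2^{n-1}$.

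I would then rule out $\epsilon_{\bf 2}\geqslant 4$ by contradiction, constructing a non-trivial element of $R_1 \cap \mathrm{Aut}(G)$. The candidate is $(a^{\epsilon}y^{\gamma})^j$ with $j = 2^{n-3}/\gamma_{\bf 2}$, which is a positive integer since $\gamma<2^{n-2}$ forces $\gamma_{\bf 2}\leqslant 2^{n-3}$. By Lemma~\ref{ch6lem11}(1) this power equals $a^{\epsilon M_j}y^{j\gamma}$, and Lemma~\ref{ch6lem01} gives $(M_j)_{\bf 2}=j$ since $j$ is a 2-power. A $2$-adic check then shows $v_2(\epsilon M_j)\geqslant v_2(\epsilon_{\bf 2})+v_2(j) \geqslant 2+(n-3-v_2(\gamma_{\bf 2}))=v_2(\alpha_{\bf 2})$, so $a^{\epsilon M_j} \in \langle a^{\alpha_{\bf 2}}\rangle\subseteq R_1$. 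Cancelling this factor leaves $y^{j\gamma} \in R_1$, and since $j\gamma = 2^{n-3}\gamma_{{\bf 2}'}$ with $\gamma_{{\bf 2}'}$ odd and $|y|=2^{n-2}$, this collapses to the involution $y^{2^{n-3}}\in \mathrm{Aut}(G)\setminus\{{\bf 1}\}$, contradicting regularity. Hence $\epsilon_{\bf 2}=2$, whence $|a^{\epsilon}y^{\gamma}|=2^{n-1}=|R_1|$ forces $R_1 = \langle a^{\epsilon}y^{\gamma}\rangle$.

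The main obstacle is the second step: threading the needle in the choice of exponent $j$. Too small a $j$ breaks the 2-adic inequality needed to place $a^{\epsilon M_j}$ inside $\langle a^{\alpha_{\bf 2}}\rangle$, while too large a $j$ makes $y^{j\gamma}$ trivial modulo $|y|$. The exponent $j = 2^{n-3}/\gamma_{\bf 2}$ is precisely the value that balances both constraints when $\epsilon_{\bf 2}\geqslant 4$; when $\epsilon_{\bf 2}=2$ the same inequality fails by exactly one factor of $2$, which neatly explains why the dichotomy lands on $\epsilon_{\bf 2}=2$.
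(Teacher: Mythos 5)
Your proof is correct, and it reaches the two conclusions in the opposite order from the paper, by a genuinely different (though closely related) route. The paper first proves $R_{1}=\langle a^{\epsilon}y^{\gamma}\rangle$: after observing that $\epsilon_{\bf 2}\geqslant \alpha_{\bf 2}$ would put $y^{\gamma}$ in $R_{1}$, it normalises $\epsilon_{{\bf 2}'}=1$ by conjugacy, raises $a^{\epsilon}y^{\gamma}$ to the power $\ell=\alpha_{\bf 2}/\epsilon$ so that the $G_R$-part becomes a generator of $\langle a^{\alpha_{\bf 2}}\rangle$, and argues via semiregularity that the residual $y^{\gamma\ell}$ must vanish, so $\langle a^{\alpha_{\bf 2}}\rangle\leqslant \langle a^{\epsilon}y^{\gamma}\rangle$; only then does it read off $\epsilon_{\bf 2}=2$ from $|R_{1}|=2^{n-1}$ and Lemma~\ref{ch6lem12}. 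You instead extract the relation $\alpha_{\bf 2}\gamma_{\bf 2}=2^{n-1}$ up front via the product formula for $\langle a^{\alpha_{\bf 2}}\rangle\langle a^{\epsilon}y^{\gamma}\rangle$ (legitimate, since $\langle a^{\alpha_{\bf 2}}\rangle$ is normal in $\Hol(G)$ so the set product is a subgroup equal to $R_{1}$), then kill $\epsilon_{\bf 2}\geqslant 4$ with the exponent $j=2^{n-3}/\gamma_{\bf 2}$, whose $2$-adic bookkeeping I checked: $v_2(\epsilon M_j)\geqslant 2+(n-3)-v_2(\gamma_{\bf 2})=v_2(\alpha_{\bf 2})$ exactly when $\alpha_{\bf 2}\gamma_{\bf 2}=2^{n-1}$, so $y^{2^{n-3}}\in R_{1}$ and regularity fails; finally $R_{1}=\langle a^{\epsilon}y^{\gamma}\rangle$ drops out from the order count. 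Both arguments turn on the same obstruction (a nontrivial power of $y$ inside $R_{1}$ fixes ${\bf 1}$) and the same computational inputs (Lemmas~\ref{ch6lem11}, \ref{ch6lem12}, \ref{ch6lem01}), but your version avoids the conjugacy normalisation and the case analysis on whether $2^{n-2}\mid\gamma\ell$, at the cost of needing the order relation $\alpha_{\bf 2}\gamma_{\bf 2}=2^{n-1}$ first --- a relation the paper only derives later, in the proof of Theorem~\ref{ch6them441}. Your closing remark that the inequality fails by exactly one power of $2$ when $\epsilon_{\bf 2}=2$ is a nice sanity check that the bound is tight.
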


\begin{proof} Observe that since $R$ is regular on $G$, we have that $R_{1}$ must be semiregular on $G$. If $\epsilon_{\bf2}\geqslant \alpha_{\bf2}$, then $a^{\epsilon}=(a^{\alpha_{\bf2}})^{\frac{\epsilon_{\bf2}}{\alpha_{\bf2}}\cdot \epsilon_{{\bf2}'}}$, and so we have that $(a^{\alpha_{\bf2}})^{-\frac{\epsilon_{\bf2}}{\alpha_{\bf2}}\cdot \epsilon_{{\bf2}'}}a^{\epsilon}y^{\gamma}=a^{-\epsilon}a^{\epsilon}y^{\gamma}=y^{\gamma}$. Thus $y^{\gamma}\in R_{1}$, which implies that $R_{1}$ is not semiregular as $y^{\gamma}\neq {\bf 1}$ and fixes $\bf1$. Hence we may assume that $\alpha_{\bf2}> \epsilon_{\bf2}$. 

Notice that $\langle a^{\alpha_{\bf2}}, a^{\epsilon}y^{\gamma}\rangle$ is conjugate to $\langle a^{\alpha_{\bf2}}, a^{\epsilon_{\bf2}}y^{\gamma}\rangle$, and so we may assume that $\epsilon_{{\bf2}'}=1$. Since 
$$
a^{\alpha_{\bf2}}=(a^{\epsilon})^{\frac{\alpha_{\bf2}}{\epsilon}},
$$
we have that $a^{\alpha_{\bf2}}=(a^{\epsilon})^{\ell}$ with $\ell=\frac{\alpha_{\bf2}}{\epsilon}$. Thus by Lemma~\ref{ch6lem11},
\begin{equation*}
(a^{\epsilon}y^{\gamma})^{\ell} = a^{M}y^{\gamma\ell} 
\end{equation*}
with $M=\epsilon\frac{1-5^{-\gamma\ell}}{1-5^{-\gamma}}$. By Lemma~\ref{ch6lem01}, we have that $M_{\bf2}=\epsilon_{\bf2}\ell_{\bf2}=\alpha_{\bf2}$. Thus  
\begin{align*}
(a^{\epsilon}y^{\gamma})^{\ell} = & a^{M}y^{\gamma\ell}\\
                                  = & a^{\alpha_{\bf2}M_{{\bf2}'}}y^{\gamma\ell}.
\end{align*}
Suppose that $2^{n-2}\nmid \gamma\ell$. Thus $y^{\gamma\ell}=a^{-\alpha_{\bf2}M_{{\bf2}'}}(a^{\epsilon}y^{\gamma})^{\ell}$, and so $y^{\gamma\ell}\in R_{1}$ which implies that $R_{1}$ is not semiregular. Thus $2^{n-2}\bigm| \gamma\ell$, that is, $(a^{\epsilon}y^{\gamma})^{\ell}=a^{\alpha_{\bf2}M_{{\bf2}'}}$. Since $M_{{\bf2}'}$ is odd, we have that $\langle a^{\alpha_{\bf2}M_{{\bf2}'}}\rangle=\langle a^{\alpha_{\bf2}}\rangle$, which implies that $\langle a^{\alpha_{\bf2}}\rangle=\langle (a^{\epsilon}y^{\gamma})^{\ell}\rangle$. Since $(a^{\epsilon}y^{\gamma})^{\ell}\in \langle a^{\epsilon}y^{\gamma}\rangle$, we have that $\langle a^{\alpha_{\bf2}}\rangle\leqslant  \langle a^{\epsilon}y^{\gamma}\rangle$, which implies that $R_{1}=\langle a^{\epsilon}y^{\gamma}\rangle$.  Since $|R_{1}|=|a^{\epsilon}y^{\gamma}|=2^{n-1}$ and $|a^{\epsilon}y^{\gamma}|=max(\frac{2^{n-2}}{\gamma_{\bf2}}, \frac{2^{n}}{\epsilon_{\bf2}})$ by Lemma~\ref{ch6lem12}, we have that $\epsilon_{\bf2}=2$. \end{proof}

\begin{guess}\label{ch6them441}
If $n\geqslant 3$, then $R$ is regular if and only if one of the following holds
\begin{enumerate}
\item $R=\langle a^{2}y^{2^{n-3}}\rangle\rtimes \langle ax\rangle$ with $R\cap G_R=\langle a^{4}\rangle$, and $R\cong QD_{2^{n}}$ is a quasidihedral group; 
\item $R=\langle a^{2\cdot5^{-1}}y\rangle\times \langle ax\rangle$ and $R\cap G_R=\langle a^{2^{n-1}}\rangle$;
\item $R=\langle a^{2\cdot5^{-1}+2^{n-2}}y\rangle \rtimes \langle ax\rangle$ with $R\cap G_R=\langle a^{2^{n-1}}\rangle$, and $R\cong M_{n}(2)$.
\end{enumerate}
\end{guess}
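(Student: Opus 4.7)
The plan is to continue from Lemma~\ref{ch6lem441}, which leaves us with $R = \langle a^{\epsilon}y^{\gamma}\rangle\rtimes\langle ax\rangle$ with $\epsilon_{\bf 2}=2$ and $1\leqslant\gamma\leqslant 2^{n-2}-1$. The decisive computation is the conjugation of the cyclic generator by the involution $ax$: using $xa=a^{-1}x$, $(ax)^{2}={\bf 1}$, and $y^{-\gamma}a^{\beta}y^{\gamma}=a^{5^{\gamma}\beta}$, one obtains
$$
(a^{\epsilon}y^{\gamma})^{ax} \;=\; a^{\,1-\epsilon-5^{-\gamma}}\,y^{\gamma}.
$$
For $R$ to be a subgroup of $\Hol(G)$ this element must lie in $R_{1}=\langle a^{\epsilon}y^{\gamma}\rangle$. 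By Lemma~\ref{ch6lem11}(1), the elements of $R_{1}$ whose $y$-part equals $y^{\gamma}$ are exactly the powers $(a^{\epsilon}y^{\gamma})^{k}$ with $k\equiv 1\Mod{2^{n-2}/\gamma_{\bf 2}}$, so equating $a$-parts reduces the closure condition to a family of $2$-adic congruences $\epsilon\, M_{k}\equiv 1-\epsilon-5^{-\gamma}\Mod{2^{n}}$ with $M_{k}=(1-5^{-k\gamma})/(1-5^{-\gamma})$.

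I would then analyse these congruences by splitting on $\gamma_{\bf 2}$, using Lemmas~\ref{ch6lem00} and~\ref{ch6lem01} to track $(5^{\gamma}-1)_{\bf 2}=4\gamma_{\bf 2}$ and $(M_{k})_{\bf 2}=k_{\bf 2}$. The constraint $\epsilon_{\bf 2}=2$ then balances $2$-adic valuations on both sides only in two extreme situations. When $\gamma_{\bf 2}=2^{n-3}$, a further normalization via Lemma~\ref{ch6lem13} forces $\gamma=2^{n-3}$ and $\epsilon=2$, yielding Case~(1). When $\gamma_{\bf 2}=1$, a conjugation by a suitable $y^{j}$, combined with absorbing the induced change of the complement generator back into the normal cyclic factor, reduces to $\gamma=1$; the resulting congruence modulo $2^{n-1}$ then has exactly the two $\Hol(G)$-conjugacy-class representatives $\epsilon\equiv 2\cdot 5^{-1}$ and $\epsilon\equiv 2\cdot 5^{-1}+2^{n-2}$, yielding Cases~(2) and~(3). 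Intermediate values of $\gamma_{\bf 2}$ must be shown either to produce a nontrivial stabiliser (violating regularity) or to give a subgroup $\Hol(G)$-conjugate to one already listed.

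The remaining work is bookkeeping. The intersection $R\cap G_{R}$ is read off in each case from the $2$-adic valuation of $(a^{\epsilon}y^{\gamma})^{2^{n-2}/\gamma_{\bf 2}}$ via Lemma~\ref{ch6lem01}, producing $\langle a^{4}\rangle$ in Case~(1) and $\langle a^{2^{n-1}}\rangle$ in Cases~(2) and~(3). The isomorphism type follows by substituting the specific $(\epsilon,\gamma)$ back into the conjugation formula and rewriting the right-hand side as a power of $\sigma=a^{\epsilon}y^{\gamma}$: one obtains $\sigma^{ax}=\sigma^{2^{n-2}-1}$ in Case~(1) (quasidihedral), $\sigma^{ax}=\sigma$ in Case~(2) (so $R$ splits as the direct product $\langle\sigma\rangle\times\langle ax\rangle$), and $\sigma^{ax}=\sigma^{2^{n-2}+1}$ in Case~(3) (the modular group $M_{n}(2)$), matching the claimed structures. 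Regularity is then immediate because $|R|=2^{n}=|G|$ and the complement $\langle ax\rangle$ swaps the two orbits of $\langle a^{2}\rangle$ on $G$ described in the proof of Theorem~\ref{ch6them421}.

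The main obstacle I anticipate is the $2$-adic bookkeeping in the second step: carefully tracking how a normalising conjugation by $y^{j}$ or $x$ simultaneously adjusts $\epsilon$, $\gamma$ and the complement generator $ax$, and thereby verifying that no $(\epsilon,\gamma)$ with $\epsilon_{\bf 2}=2$ and intermediate $\gamma_{\bf 2}$ yields a regular subgroup that is not $\Hol(G)$-conjugate to one of the three listed forms.
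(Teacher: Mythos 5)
Your proposal is correct and follows essentially the same route as the paper: the paper phrases your closure condition $(a^{\epsilon}y^{\gamma})^{ax}\in\langle a^{\epsilon}y^{\gamma}\rangle$ as the commutator condition $[a^{\epsilon}y^{\gamma},ax]\in R\cap G_{R}$, computes $[a^{\epsilon}y^{\gamma},ax]=a^{(1-2\epsilon)5^{\gamma}-1}$, and disposes of the intermediate values of $\gamma_{\bf 2}$ that you flag as the main obstacle in one line (for $\gamma$ even this commutator has order $2^{n-2}$, which must divide $|R\cap G_{R}|=2\gamma_{\bf 2}$, forcing $\gamma_{\bf 2}=2^{n-3}$). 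The remaining steps you outline --- reduction to $\gamma=1$ in the odd case, the two residues $2\cdot 5^{-1}$ and $2\cdot 5^{-1}+2^{n-2}$, reading off $R\cap G_{R}$ from $(a^{\epsilon}y^{\gamma})^{2^{n-2}/\gamma_{\bf 2}}$, identifying the isomorphism type from $\sigma^{ax}$, and proving regularity via the two orbits of $\langle a^{2}\rangle$ --- all coincide with the paper's proof.
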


\begin{proof} {\it (Sufficiency.)} Suppose that $R$ is regular. Then $|R|=2^{n}$. By Lemma~\ref{ch6lem441}, we have that $R_{1}=\langle a^{\epsilon}y^{\gamma}\rangle$ with $\epsilon_{\bf2}=2$. Since $\langle a^{\alpha_{\bf2}}\rangle\leqslant  R_{1}$, we have that $R_{1}\cap G_{R}=R\cap G_{R}$. Let $m=2^{n-2}/\gamma_{\bf 2}$. Notice that for all $u\in R\cap G_{R}$, $u=(a^{\epsilon}y^{\gamma})^{\ell}$ where $2^{n-2}\mid \gamma_{\bf2}\ell_{2}$, and so $R\cap G_R\leqslant \langle (a^{\epsilon}y^{\gamma})^{m}\rangle$. Thus $\langle (a^{\epsilon}y^{\gamma})^{m}\rangle= R\cap G_R$. Since 
\begin{equation*}
(a^{\epsilon}y^{\gamma})^{m}=a^{\epsilon\frac{1-5^{-m\gamma}}{1-5^{-\gamma}}},
\end{equation*}
we have that 
\begin{equation}\label{ch6equ441}
|R\cap G_R|=|(a^{\epsilon}y^{\gamma})^{m}|=\frac{2^{n}}{\epsilon_{\bf2}m_{\bf2}}=2\gamma_{\bf2}.
\end{equation}

Since $R\cap G_R=R_{1}\cap G_R$, and $R/(R\cap G_R)$ is abelian, we have that $[a^{\epsilon}y^{\gamma}, ax]\in R\cap G_R$, which implies that $|[a^{\epsilon}y^{\gamma}, ax]|\bigm| |R\cap G_R|$. Since $(a^{\epsilon}y^{\gamma})^{-1}=a^{-\epsilon\cdot5^{\gamma}}y^{-\gamma}$, we have that
\begin{align}\label{ch6equ442}
[a^{\epsilon}y^{\gamma}, ax] & =  a^{-\epsilon\cdot5^{\gamma}}y^{-\gamma}axa^{\epsilon}y^{\gamma}ax \nonumber \\
                                & = a^{-\epsilon\cdot5^{\gamma}}y^{-\gamma}aa^{-\epsilon}y^{\gamma}xax \nonumber \\
                                & = a^{-\epsilon\cdot5^{\gamma}}a^{(1-\epsilon)5^{\gamma}}a^{-1} \nonumber \\
                                & = a^{-\epsilon\cdot5^{\gamma}+(1-\epsilon)5^{\gamma}-1} \nonumber \\
                                & = a^{(1-2\epsilon)5^{\gamma}-1}.
\end{align}

Recall that
\begin{equation*}
5^{\gamma}=1+\sum\limits_{i=1}^{\gamma}4^{i}{\gamma\choose i},
\end{equation*}
and so 
\begin{align*}
(1-2\epsilon)5^{\gamma}-1 = & (1-2\epsilon)+ (1-2\epsilon)\sum\limits_{i=1}^{\gamma}4^{i}{\gamma\choose i}-1 \\
                              = & -2\epsilon+ (1-2\epsilon)\sum\limits_{i=1}^{\gamma}4^{i}{\gamma\choose i}.
\end{align*}
If $\gamma$ is even, then $(1-2\epsilon)\sum\limits_{i=1}^{\gamma}4^{i}{\gamma\choose i}$ is divisible by 8 and so, since $\epsilon_{\bf2}=2$, we have that $[5^{\gamma}-2\epsilon5^{\gamma}-1]_{\bf2}=4$. Thus $|[a^{\epsilon}y^{\gamma}, ax]|=2^{n-2}$. Thus $2^{n-2}\bigm| 2\gamma_{\bf2}$, and so we have that $\gamma_{\bf2}=2^{n-3}$, which implies that $\gamma=2^{n-3}$. By (\ref{ch6equ441}), we have that $|R\cap G|=2^{n-2}$, that is, $R\cap G=\langle a^{4}\rangle$. Since $\epsilon_{\bf2}=2$, we have that $\epsilon-2=2\cdot\epsilon_{{\bf2}'}-2=2(\epsilon_{{\bf2}'}-1)$ and so $4\mid \epsilon-2$. Thus there exists $\ell$ such that 
$$
a^{\epsilon}y^{2^{n-3}}=(a^{4})^{\ell}\cdot a^{2}y^{2^{n-3}},
$$
which implies that $a^{2}y^{2^{n-3}}\in R_{1}$. Hence $R=\langle a^{2}y^{2^{n-3}}\rangle\rtimes \langle ax\rangle$. Moreover, by Lemma~\ref{ch6lem11}(2) and Lemma~\ref{ch6lem12}, $|a^{2}y^{2^{n-3}}|=|ax|=2^{n-1}$. Since
\begin{align*}
ax(a^{2}y^{2^{n-3}})ax = & a^{-1}y^{2^{n-3}}xax \\
   = & a^{-1}y^{2^{n-3}}a^{-1} \\
   = & a^{-1}a^{-5^{2^{n-3}}}y^{2^{n-3}} \\
   = & a^{-(1+5^{2^{n-3}})}y^{2^{n-3}},
\end{align*}
and by Lemma~\ref{ch6lem00} we have that $1+5^{2^{n-3}}\equiv 2\Mod {2^{n-1}}$, and so 
$$
ax(a^{2}y^{2^{n-3}})ax=a^{-(2+2^{n-1})}y^{2^{n-3}}. 
$$
Notice that by Lemma~\ref{ch6lem01} and Lemma~\ref{ch6lem11}, 
$$
(a^{2}y^{2^{n-3}})^{2^{n-2}}=a^{2\frac{1-5^{-2^{n-3}\cdot 2^{n-2}}}{1-5^{-2^{n-3}}}}=a^{2^{n-1}},
$$
and so
$$
(a^{2}y^{2^{n-3}})^{2^{n-2}-1}=a^{2^{n-1}}(a^{2}y^{2^{n-3}})^{-1}=a^{2^{n-1}}a^{-2\cdot 5^{2^{n-3}}}y^{2^{n-3}}=a^{2^{n-1}-2\cdot 5^{2^{n-3}}}y^{2^{n-3}}.
$$
By Lemma~\ref{ch6lem00} we have that $-2\cdot 5^{2^{n-3}}\equiv -2\Mod {2^{n}}$, and so 
$$
(a^{2}y^{2^{n-3}})^{2^{n-2}-1}=a^{-2+2^{n-1}}y^{2^{n-3}}=a^{-(2+2^{n-1})}y^{2^{n-3}}.
$$
Hence $ax(a^{2}y^{2^{n-3}})ax=(a^{2}y^{2^{n-3}})^{2^{n-2}-1}$. Therefore, $R=\langle a^{2}y^{2^{n-3}}\rangle\rtimes \langle ax\rangle\cong QD_{2^{n}}$, that is a quasidihedral group.

Suppose that  $\gamma$ is odd, that is, $\gamma_{\bf2}=1$. Suppose that  $\gamma\neq 1$. By Lemma~\ref{ch6lem01} we have that $\frac{1-5^{-\gamma}}{1-5^{-1}}$ is odd, so it has an inverse in $Z_{2^{n}}^{*}$. Let $\epsilon'=\epsilon\left(\frac{1-5^{-\gamma}}{1-5^{-1}}\right)^{-1}$. Thus $\epsilon'_{\bf2}=2$, and 
$$
(a^{\epsilon'}y)^{\gamma}=a^{\epsilon}y^{\gamma}.
$$
Hence $R_{1}=\langle a^{\epsilon}y^{\gamma}\rangle\leqslant  \langle a^{\epsilon'}y\rangle$. By Lemma~\ref{ch6lem12}, we have that $|a^{\epsilon'}y|=2^{n-1}$. Since $|R_{1}|=2^{n-1}$, we have that $R_{1}=\langle a^{\epsilon'}y\rangle$. Thus we may assume that $\gamma=1$. 

Since $\gamma=1$, by (\ref{ch6equ441}) we have that $R\cap G_{R}=\langle a^{2^{n-1}}\rangle$, and so $[a^{\epsilon}y, ax]=1$ or $[a^{\epsilon}y, ax]=a^{2^{n-1}}$. First suppose that $[a^{\epsilon}y, ax]=1$. Thus $R$ is abelian, and by (\ref{ch6equ442}) we have that $2^{n}\bigm| [(1-2\epsilon)5-1]$, that is, $2^{n}\bigm| (4-2\epsilon_{\bf2}\epsilon_{{\bf2}'}\cdot5)$, and so $2^{n}\bigm| (4-4\cdot 5\epsilon_{{\bf2}'})$ as $\epsilon_{\bf2}=2$. Thus $2^{n-2}\bigm| (5\epsilon_{{\bf2}'}-1)$, which implies that $\epsilon_{{\bf2}'}=5^{-1}(2^{n-2}\ell+1)$ for some $\ell$. Thus $\epsilon=2\epsilon_{{\bf2}'}=2^{n-1}\cdot5^{-1}\ell+2\cdot5^{-1}$. Since $a^{2^{n-1}\cdot5^{-1}\ell}\in \langle a^{2^{n-1}}\rangle=R\cap G_{R}$, we have that $R=\langle a^{\epsilon}y^{\gamma}\rangle\times \langle ax\rangle=\langle a^{2\cdot5^{-1}}y\rangle\times \langle ax\rangle$. 

Now suppose that  $[a^{\epsilon}y, ax]=a^{2^{n-1}}$, that is, $R$ is nonabelian. By (\ref{ch6equ442}), we have that $[a^{\epsilon}y, ax]=a^{5-2\cdot 5\epsilon-1}=a^{4-2\cdot5\epsilon}$, and so 
\begin{center}
$2^{n-1}\bigm| (4-2\cdot 5\epsilon)$, and $2^{n}\nmid (4-2\cdot 5\epsilon)$.
\end{center}
Thus we have that $2^{n-2}\bigm| (2-5\epsilon)$ and $2^{n-1}\nmid (2-5\epsilon)$, which implies that $\epsilon=2\cdot 5^{-1}+2^{n-2}\ell$ where $\ell=1$ or $\ell=3$. Observe that if $\ell=3$, then $\epsilon=2\cdot 5^{-1}+2^{n-2}+2^{n-1}$. Since $a^{2^{n-1}}\in R\cap G_{R}$, we have that $\langle a^{\epsilon}y\rangle=\langle a^{2^{n-1}}, a^{\epsilon}y\rangle=\langle a^{2\cdot5^{-1}+2^{n-2}}y\rangle$. Hence $R=\langle a^{2\cdot5^{-1}+2^{n-2}}y\rangle\rtimes \langle ax\rangle$.  Moreover, by Lemma~\ref{ch6lem11}(2) and Lemma~\ref{ch6lem12}, $|a^{2\cdot5^{-1}+2^{n-2}}y|=|ax|=2^{n-1}$. Let $t=2\cdot5^{-1}+2^{n-2}$, and so 
$$
ax(a^{t}y)ax= a^{1-t}yxax=a^{1-t}ya^{-1}y^{-1}y=a^{1-t}a^{-5^{-1}}y=a^{1-3\cdot 5^{-1}-2^{n-2}}y.
$$
Notice that by Lemma~\ref{ch6lem11}, 
$$
(a^{t}y)^{2^{n-2}}=a^{t\frac{1-5^{-2^{n-2}}}{1-5^{-1}}}.
$$
Let $M=t\frac{1-5^{-2^{n-2}}}{1-5^{-1}}$, and by Lemma~\ref{ch6lem01} we have that $M_{\bf2}=2^{n-2}t_{\bf2}$. Since $t_{\bf2}=2$, we have that $M_{\bf2}=2^{n-1}$, that is, $(a^{t}y)^{2^{n-2}}=a^{2^{n-1}}$. 
Hence 
$$
(a^{t}y)^{2^{n-2}+1}=a^{2^{n-1}}a^{t}y=a^{2^{n-1}+t}y=a^{2^{n-1}+2\cdot5^{-1}+2^{n-2}}y.
$$
Since 
\begin{align*}
2^{n-1}+2\cdot5^{-1}+2^{n-2} = & (1-1)+2\cdot 5^{-1}+2^{n-2}-2^{n-1} \\
   = & 1-5\cdot 5^{-1}+2\cdot 5^{-1}+2^{n-2}(1-2) \\
   = & 1-3\cdot 5^{-1}-2^{n-2},
\end{align*}
we conclude that $ax(a^{t}y)ax=(a^{t}y)^{2^{n-2}+1}$. Hence in this case, 
$$
R=\langle a^{2\cdot5^{-1}+2^{n-2}}y\rangle\rtimes \langle ax\rangle\cong M_{n}(2).
$$

{\it (Necessity.)} Now suppose that  $R=\langle a^{2}y^{2^{n-3}}\rangle\rtimes \langle ax\rangle$. We show that $R$ is regular by proving that $R$ is transitive on $G$. Since $a^{2}y^{2^{n-3}}$ has order $2^{n-1}$ and is semiregular by Theorem~\ref{ch6them31}, we have that $\langle a^{2}y^{2^{n-3}}\rangle$ has two orbits on $G$, say $O_{1}$ and $O_{2}$. We may assume that $a^{\langle a^{2}y^{2^{n-3}}\rangle}=O_{1}$. Since $(a^{2}y^{2^{n-3}})^{r}=a^{2\cdot\frac{1-5^{-2^{n-3}r}}{1-5^{-2^{n-3}}}}y^{2^{n-3}r}$ and $2\cdot\frac{1-5^{-2^{n-3}r}}{1-5^{-2^{n-3}}}$ is even for all $1\leqslant  r\leqslant  2^{n-1}$, we have that $O_{1}=\{a^{\ell}| \ell \ \text{is odd}\}$. Since $a^{ax}=a^{-2}$, we have that $a^{ax}\in O_{2}$. Since $\langle a^{2}y^{2^{n-3}}\rangle$ is transitive on $O_{2}$, we conclude that $R$ is transitive on $G$. Hence $R$ is regular. 

With the same arguments, we conclude that if $R=\langle a^{2\cdot5^{-1}}y\rangle\times \langle ax\rangle$, then $R$ is regular; and if $R=\langle a^{2\cdot5^{-1}+2^{n-2}}y\rangle\rtimes \langle ax\rangle$, then $R$ is regular as well. \end{proof}

\bigskip

{\it\bf Proof of Theorem~\ref{ch6maintheo}}. Let $R$ be a regular subgroup of $H$. Then $R$ belongs to one of  the five types $(1)-(5)$ in Theorem~\ref{ch6them41}. If $R=R\cap G_{R}$, then $R=G_R
$ as $R$ is regular. Thus we may assume that $R\cap G_{R}<R$. 

Suppose that $R=(R\cap G_{R})\langle  a^{\epsilon}y^{\gamma}\rangle$, that is, $R=\langle a^{2^{t}},  a^{\epsilon}y^{\gamma}\rangle$ where $1\leqslant  t\leqslant  n-1$. Thus by Lemma~\ref{ch6lem411}, we have that if $R$ is regular then $R$ is conjugate to $\langle  ay^{\gamma}\rangle$. Further by Theorem~\ref{ch6them411} we have that $R$ is regular if and only if $R=\langle  ay^{2^{t}}\rangle$ where $0\leqslant  t\leqslant  n-2$. 

Suppose that $R=(R\cap G_{R})\langle  a^{\epsilon}x\rangle$, that is, $R=\langle a^{2^{t}}, a^{\epsilon}x\rangle$ where $1\leqslant  t\leqslant  n-1$. By Theorem~\ref{ch6them421}, we have that $R$ is regular if and only if $R=\langle a^{2}, ax\rangle$. 

Suppose that $R=(R\cap G_{R})\langle  a^{\epsilon}xy^{\gamma}\rangle$, that is, $R=\langle a^{2^{t}}, a^{\epsilon}xy^{\gamma}\rangle$ where $1\leqslant  t\leqslant  n-1$. Then by Theorem~\ref{ch6them431}, $R$ is regular if and only if $R=\langle a^{2}, axy^{2^{n-3}}\rangle$.

Suppose that $R=(R\cap G_{R})\langle  a^{\epsilon}x, a^{\epsilon'}y^{\gamma}\rangle$. Since $R$ is regular, by Theorem~\ref{ch6them31} we have that $\epsilon$ must be odd. Up to conjugacy, we may assume that $R=(R\cap G_{R})\langle  ax, a^{\epsilon'}y^{\gamma}\rangle$. By Lemma~\ref{ch6lem441}, if $R$ is regular, then $\epsilon'$ must be even. By Theorem~\ref{ch6them441}, if $\gamma=2^{n-3}$, then $R=\langle a^{2}y^{2^{n-3}}\rangle\rtimes \langle ax\rangle$. If $\gamma=1$, then either $R=\langle a^{2\cdot 5^{-1}}y\rangle\times \langle ax\rangle$ that is abelian, or $R=\langle a^{2\cdot 5^{-1}+2^{n-2}}y\rangle \rtimes \langle ax\rangle$ that is nonabelian. \qed

\subsection{Normality of the Cyclic Regular Subgroups in $\Hol(G)$}

Recall that $G$ has the NNN-property if and only if there exists an NNN-graph for it. Suppose that $\Gamma$ is  a normal circulant for $G$.  All regular subgroups of $\Aut(\Gamma)$ are regular in $\Hol(G)$ as $\Aut(\Gamma)\leqslant \Hol(G)$. If all the regular cyclic subgroups of $\Aut(\Gamma)$ are normal in $\Hol(G)$, then we have that $\Gamma$ is not an NNN-graph for $G$. If there exists a non-normal regular cyclic subgroup $H$ in $\Hol(G)$, then next we check whether $H$ is a non-normal subgroup of $\Aut(\Gamma)$. If it is, then we have found an NNN-graph for $G$, which implies that $G$ has the NNN-property. Hence it is necessary to investigate the normality of the regular cyclic subgroups of $\Hol(G)$, which are the regular subgroups of Type (2) in Theorem~\ref{ch6maintheo}.

\begin{guess}\label{ch6them51}
Let $R$ be a regular cyclic subgroup of $\Hol(G)$. Then $R$ is normal in $\Hol(G)$ if and only if  $R=G_R$ or $R=\langle ay^{2^{n-3}}\rangle$. 
\end{guess}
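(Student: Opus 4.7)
The plan is to test normality of a regular cyclic subgroup $R\leqslant\Hol(G)$ by checking whether the generators $a$, $x$, $y$ of $\Hol(G)$ conjugate a generator of $R$ back into $R$. By Theorem~\ref{ch6maintheo}, up to conjugacy $R$ is either $G_R$ or $\langle ay^{2^t}\rangle$ with $0\leqslant t\leqslant n-3$. The subgroup $G_R$ is the kernel of the canonical projection $\Hol(G)\to\Aut(G)$, hence normal, so the remaining task is to determine for which $t$ the subgroup $R=\langle ay^{2^t}\rangle$ is normal in $\Hol(G)$.

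First I would identify which elements of the form $a^\alpha y^{2^t}$ lie in $R$. By Theorem~\ref{ch6them411}, $R\cap G_R=\langle a^{2^{n-t-2}}\rangle$, and since $R$ is abelian the desired elements form the coset $(R\cap G_R)\cdot ay^{2^t}$, which by direct computation equals
\[
\{a^\alpha y^{2^t}:\alpha\equiv 1\Mod{2^{n-t-2}}\}.
\]
Next, using $xax^{-1}=a^{-1}$ and that $x,y$ commute, the $x$-conjugate of the generator is $x(ay^{2^t})x^{-1}=a^{-1}y^{2^t}$. This lies in $R$ only if $-1\equiv 1\Mod{2^{n-t-2}}$, i.e., $n-t-2\leqslant 1$, which combined with $t\leqslant n-3$ forces $t=n-3$. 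Hence $t=n-3$ is necessary.

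For sufficiency assume $t=n-3$: the membership criterion reduces to ``$\alpha$ odd''. A direct computation of each generator conjugate gives
\[
a(ay^{2^{n-3}})a^{-1}=a^{2-5^{-2^{n-3}}}y^{2^{n-3}},\quad x(ay^{2^{n-3}})x^{-1}=a^{-1}y^{2^{n-3}},\quad y(ay^{2^{n-3}})y^{-1}=a^{5^{-1}}y^{2^{n-3}},
\]
and since $5^{-2^{n-3}}$, $-1$, and $5^{-1}$ are all odd, each $a$-exponent above is odd. Thus every generator conjugate of $ay^{2^{n-3}}$ lies in $R$, so $R$ is normal. The main obstacle is pinning down the coset structure in the first step and being careful with the paper's convention $y^\gamma a^\alpha y^{-\gamma}=a^{\alpha\cdot 5^{-\gamma}}$ (as extracted from the proof of Lemma~\ref{ch6lem11}); once the coset is identified, the remaining normality tests are elementary parity checks.
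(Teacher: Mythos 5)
Your proof is correct and takes essentially the same route as the paper: both arguments reduce to the classification in Theorem~\ref{ch6maintheo} and then derive the necessity of $t=n-3$ from the single conjugation $x(ay^{2^{t}})x^{-1}=a^{-1}y^{2^{t}}$ together with the fact that $R\cap G_R=\langle a^{2^{n-t-2}}\rangle$ (the paper packages this as $\left((ay^{2^{t}})^{x}\right)^{-1}ay^{2^{t}}=a^{2\cdot 5^{2^{t}}}$ forcing $\langle a^{2}\rangle\leqslant R\cap G_R$, which is the same condition). The only cosmetic difference is in the sufficiency step, where the paper observes that $\Hol(G)/\langle a^{2}\rangle$ is abelian while you verify the three generator conjugates directly; both are valid and elementary.
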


\begin{proof} Since $R$ is cyclic,
by Theorem~\ref{ch6maintheo}, we have that $R=G_R$ or $R=\langle ay^{2^{t}}\rangle$ for $0\leq t\leq n-3$. Clearly if $R=G_R$, then $R$ is normal in $\Hol(G)$. 

Suppose that $R\neq G_{R}$. Let $N=R\cap G_{R}$. Since $R=\langle ay^{2^{t}}\rangle$ and $(ay^{2^{t}})^{2^{n-2-t}}\in G_R$, by Lemma~\ref{ch6lem11} we have that 
\begin{align*}
(ay^{2^{t}})^{2^{n-2-t}} & = a^{\frac{1-5^{-2^{n-2-t}\cdot 2^{t}}}{1-5^{-2^{t}}}},                                     
\end{align*}
and so by Lemma~\ref{ch6lem01} we have that $R\cap G_{R}=\langle a^{2^{n-2-t}}\rangle$. Thus $|R\cap G_{R}|=2^{t+2}$. Now consider $(ay^{2^{t}})^{x}$, that is,
\begin{equation*}
(ay^{2^{t}})^{x}=a^{-1}y^{2^{t}}.
\end{equation*}
Thus 
\begin{align*}
((ay^{2^{t}})^{x})^{-1}ay^{2^{t}} & = y^{-2^{t}}a^{2}y^{2^{t}}\\
                                                   &  = (a^{2})^{y^{2^{t}}}\\
                                                   & = a^{2\cdot 5^{2^{t}}},                                          
\end{align*}
and so $((ay^{2^{t}})^{x})^{-1}ay^{2^{t}}\in \langle a^{2}\rangle$. Notice that if $R\unlhd \Hol(G)$, then $((ay^{2^{t}})^{x})^{-1}ay^{2^{t}}\in R$. Thus $\langle a^{2}\rangle\leqslant R\cap G_{R}$, and so $|R\cap G_{R}|\geqslant 2^{n-1}$. Since $|R\cap G_{R}|=2^{t+2}$, we have that $t= n-3$. Hence $R$ is non-normal in $\Hol(G)$ for all $0\leqslant t\leqslant n-4$. 

Now let $R=\langle ay^{2^{n-3}}\rangle$. Thus $R\cap G_{R}=\langle a^{2}\rangle$, which implies that 
\begin{equation*}
\Hol(G)/(R\cap G_{R})\cong C_{2}\times \langle x, y\rangle.
\end{equation*}
Since the quotient group is abelian, we have that $R\unlhd \Hol(G)$. \end{proof}

\begin{coro}\label{ch6coro51}
Let $\Gamma$ be a circulant for $Z_{2^{n}}$. If $\Gamma$ is an NNN-graph for $Z_{2^{n}}$, then $y^{2^{n-4}}\in \Aut(\Gamma)$. 
\end{coro}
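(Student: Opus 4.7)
The plan is to combine Theorem~\ref{ch6maintheo} (the classification of regular cyclic subgroups of $\Hol(G)$) with Theorem~\ref{ch6them51} (normality of those subgroups in $\Hol(G)$). Since $\Gamma$ is normal for $G=Z_{2^n}$, we have $\Aut(\Gamma)=G_R\rtimes \Aut(G,S)\leqslant \Hol(G)$; in particular, $\Aut(G,S)$ sits as a literal subgroup of $\Aut(\Gamma)$. The NNN hypothesis supplies a regular subgroup $H\leqslant \Aut(\Gamma)$ with $H\cong G$, $H\neq G_R$, and $H\not\unlhd \Aut(\Gamma)$. Viewed inside $\Hol(G)$, $H$ is a cyclic regular subgroup distinct from $G_R$, so Theorem~\ref{ch6maintheo} tells us that $H$ is conjugate in $\Hol(G)$ to $\langle ay^{2^t}\rangle$ for some $0\leqslant t\leqslant n-3$.

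My next step is to rule out $t=n-3$. By Theorem~\ref{ch6them51}, $\langle ay^{2^{n-3}}\rangle$ is normal in $\Hol(G)$, so all of its $\Hol(G)$-conjugates coincide with it, forcing $H=\langle ay^{2^{n-3}}\rangle$. But then, since $\Aut(\Gamma)\leqslant \Hol(G)$, this $H$ would be normal in $\Aut(\Gamma)$, contradicting the choice of $H$. Hence $0\leqslant t\leqslant n-4$.

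Finally, I would pass to the quotient $\pi:\Hol(G)\to \Hol(G)/G_R\cong \Aut(G)$. Because $\Aut(G)=\langle x\rangle\times\langle y\rangle$ is abelian, $\pi$ is constant on $\Hol(G)$-conjugacy classes, so $\pi(H)=\pi(\langle ay^{2^t}\rangle)=\langle y^{2^t}\rangle$. On the other hand, $H\leqslant G_R\rtimes \Aut(G,S)$ gives $\pi(H)\leqslant \Aut(G,S)$. Combining these yields $\langle y^{2^t}\rangle\leqslant \Aut(G,S)\leqslant \Aut(\Gamma)$, and since $t\leqslant n-4$ we obtain $y^{2^{n-4}}=(y^{2^t})^{2^{n-4-t}}\in \Aut(\Gamma)$, as claimed. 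The only conceptual point worth flagging is the observation that conjugation in $\Hol(G)$ does not alter the image of a subgroup in the abelian quotient $\Aut(G)$; once that is noted, the corollary is immediate from the two cited theorems and no new computation is required.
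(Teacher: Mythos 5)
Your proof is correct and follows essentially the same route as the paper: identify the non-normal regular cyclic subgroup as a conjugate of $\langle ay^{2^{t}}\rangle$ via Theorem~\ref{ch6maintheo}, exclude $t=n-3$ via Theorem~\ref{ch6them51}, and deduce $y^{2^{n-4}}\in \Aut(\Gamma)$. Your explicit use of the abelian quotient $\Hol(G)/G_R$ to handle the ``up to conjugacy'' caveat is in fact slightly more careful than the paper's terser contrapositive version of the same step.
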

\begin{proof}
If $\Gamma$ is NNN for $Z_{2^{n}}$, then $\Aut(\Gamma)$ contains two isomorphic regular subgroups where one is normal and the other one is non-normal. Let $R$ be a non-normal one. By Theorem~\ref{ch6maintheo} we have that $R=\langle ay^{2^{t}}\rangle$ for some $0\leqslant t\leqslant n-3$. If $y^{2^{n-4}}\notin \Aut(\Gamma)$, then $ay^{2^{n-4}}\notin \Aut(\Gamma)$, which implies that $R=\langle ay^{2^{n-3}}\rangle$. Thus by Lemma~\ref{ch6them51} we have that $R\unlhd \Aut(\Gamma)$, which leads to a contradiction. 
\end{proof}

Theorem~\ref{ch6them51} leads to the following natural question: does there exist a circulant for $G$ whose automorphism group equals the holomorph group of $G$? If the answer is yes, then such a circulant is an NNN-graph for $G$. Before showing that the answer is no, we need the following lemma.

\begin{llemma}\label{lexicononnormal}
Let $\Gamma$ be a circulant for $Z_{2^n}$ with $n\geqslant3$. Suppose that $\Gamma=X[Y]$ (lexicographic product) where $X$ and $Y$ are nontrivial circulants. Then $\Gamma$ is non-normal for $Z_{2^n}$.
\end{llemma}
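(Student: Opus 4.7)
The plan is to exhibit a non-identity automorphism $\tilde\sigma\in\mathrm{Aut}(\Gamma)$ that fixes both the identity and a generator of $G=\langle g\rangle\cong Z_{2^n}$. An automorphism of the cyclic group $G$ is determined by its image on $g$, so any automorphism of $G$ fixing $g$ must be trivial; thus such a $\tilde\sigma$ cannot lie in $\mathrm{Aut}(G)$. If $\Gamma$ were normal then $\mathrm{Aut}(\Gamma)_0=\mathrm{Aut}(G,S)\leq\mathrm{Aut}(G)$, so producing $\tilde\sigma$ will be enough to force $\Gamma$ non-normal.

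Write $|V(X)|=2^r$ and $|V(Y)|=2^s$ with $r,s\geq 1$ and $r+s=n$. First I would pin down the fibre structure: the lex decomposition $\Gamma=X[Y]$ yields a $G_R$-invariant block system of $V(\Gamma)=G$ whose blocks have size $2^s$, and since $G=Z_{2^n}$ has a unique subgroup of each order, the blocks must be the cosets of $H:=\langle g^{2^r}\rangle$. The fibre containing $0$ is $H$ and the fibre containing $g$ is $gH$, and these are distinct because $g\notin H$ (as $r\geq 1$). Next I would invoke the standard inclusion $\mathrm{Aut}(Y)\wr\mathrm{Aut}(X)\leq\mathrm{Aut}(X[Y])$: for any fibre $F$ and any $\sigma\in\mathrm{Aut}(Y)$, the map $\tilde\sigma$ that acts as $\sigma$ on $F$ (via the translation identification of $F$ with $H$) and fixes every other fibre pointwise lies in $\mathrm{Aut}(\Gamma)$.

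The heart of the argument is the choice of $F$ and $\sigma$, which I would split into two cases. If $r\geq 2$ the block system has $2^r\geq 4$ blocks, so I can pick $F\notin\{H, gH\}$, and then any non-identity $\sigma\in\mathrm{Aut}(Y)$ (which exists because $Y$ is a non-trivial Cayley graph on at least two vertices) gives a $\tilde\sigma$ that fixes $V(\Gamma)\setminus F$ pointwise, in particular $0$ and $g$. If $r=1$ the only fibres are $H$ and $gH$, so I would take $F=H$ and let $\sigma\colon y\mapsto -y$ be the inversion automorphism of $Y$; inversion lies in $\mathrm{Aut}(Y)$ because the connection set of $Y$ is inverse-closed, it fixes $0_Y$, and it is non-trivial precisely because $s=n-1\geq 2$. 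The resulting $\tilde\sigma$ fixes $0$ because $\sigma$ fixes $0_Y$ and fixes $g$ because $g\notin F$, yet acts non-trivially on $H$.

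The main step to handle with care is the structural bookkeeping: confirming that the fibres of the abstract lex product coincide with the cosets of $H$ and that each $\sigma\in\mathrm{Aut}(Y)$ genuinely lifts to an automorphism of $\Gamma$ via the translation identification. Once this is established, the case analysis above and the final contradiction via cyclicity of $G$ are immediate. The hypothesis $n\geq 3$ enters only in the case $r=1$, where it guarantees $s\geq 2$ so that inversion on the fibre containing $0$ is a non-trivial stabiliser of $0$.
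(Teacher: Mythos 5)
Your overall strategy is genuinely different from the paper's and is viable: the paper proves the lemma by a pure counting argument, showing $|\Aut(\Gamma)|\geqslant |\Aut(X)|\cdot|\Aut(Y)|^{|X|}>2^{2n-1}=|\Hol(Z_{2^n})|$, which is incompatible with $\Aut(\Gamma)=G_R\rtimes\Aut(G,S)\leqslant\Hol(G)$; you instead construct an explicit non-identity automorphism fixing both $\bf 1$ and a generator, which cannot lie in $\Aut(G,S)$. However, there is one genuine gap in your write-up: the claim that the fibre partition of $X[Y]$ is a $G_R$-invariant block system, and hence consists of the cosets of $H=\langle g^{2^r}\rangle$. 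The fibres $\{x\}\times V(Y)$ are blocks for the subgroup $\Aut(Y)\,\mathrm{wr}\,\Aut(X)$, but not in general for the full group $\Aut(X[Y])$, and the hypothesis only provides \emph{some} graph isomorphism $\Gamma\cong X[Y]$; nothing forces the transported fibre partition to be preserved by $G_R$. (For instance, $K_{2^n}=K_{2}[K_{2^{n-1}}]$ under an identification whose fibres are \emph{not} cosets of the index-two subgroup; no regular cyclic subgroup of $S_{2^n}$ preserves such a partition.) So "the fibre containing $0$ is $H$ and the fibre containing $g$ is $gH$", and the "translation identification of $F$ with $H$", are not justified as stated. In the paper's actual applications the decomposition does come from a W-subgroup and the fibres are $H$-cosets, but the lemma is stated for an abstract lexicographic decomposition, and your proof must work at that level of generality.

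The gap is reparable without changing your strategy, because the coset structure is never really needed. Let $F_0$ be the fibre containing $\bf 1$; since $|F_0|=2^s\leqslant 2^{n-1}$ and $F_0$ contains the non-generator $\bf 1$, while $G$ has $2^{n-1}$ generators, some generator $a$ lies outside $F_0$. If $r\geqslant 2$ there are at least four fibres, so one fibre $F$ avoids both $\bf 1$ and $a$, and any non-identity element of $\Aut(Y)$ applied to $F$ alone (via the identification $F\cong V(Y)$ furnished by the isomorphism $\Gamma\cong X[Y]$) gives the required $\tilde\sigma$. If $r=1$, act on $F_0$ by a non-identity element of $\Aut(Y)_{y_0}$, where $y_0$ is the image of $\bf 1$ in $V(Y)$: such an element exists because $Y$ is vertex-transitive and $\Aut(Y)_{0_Y}$ contains the inversion map, which is non-trivial since $s=n-1\geqslant 2$. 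With these substitutions your argument is complete; the resulting proof is arguably more informative than the paper's counting bound, since it exhibits a concrete element of $\Aut(\Gamma)_{\bf 1}\setminus\Aut(G)$, at the cost of the case analysis on $r$.
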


\begin{proof}
Let $X=\Cay(Z_{2^{n-t}}, S)$ and $Y=\Cay(Z_{2^{t}}, T)$. Thus  $\Aut(Y) \, \mathrm{wr} \, \Aut(X)\leqslant \Aut(\Gamma)$, and so 
$$
|\Aut(X)|\cdot |\Aut(Y)|^{|X|} \ \textrm{divides} \ |\Aut(\Gamma)|.
$$
Notice that a circulant has a regular automorphism group if and only if it is isomorphic to $K_{2}$, and so $X$ and $Y$ cannot be both such circulants as $n\geqslant 3$. Thus we have that 
\begin{equation}\label{ch5equlexico}
|\Aut(\Gamma)|\geqslant|\Aut(X)|\cdot |\Aut(Y)|^{|X|}>2^{n-t}(2^{t})^{2^{n-t}}=2^{2^{n-t}\cdot t+n-t}.
\end{equation}
Recall that $\Hol(Z_{2^{n}})=G_{R}\rtimes \Aut(G)$, and $|\Hol(Z_{2^{n}})|=2^{2n-1}$. Let $f(t)=2^{n-t}\cdot t+n-t-(2n-1)=2^{n-t}\cdot t-n-t+1$. Thus $f'(t)=-\left(\ln\left(2\right)t-1\right){\cdot}2^{n-t}-1$. Notice that when $2\leqslant t\leqslant n-1$, $\ln\left(2\right)t-1>0$, and so $f'(t)<0$. Since $f(n-1)=0$ and $f(1)=2^{n-1}-n$, we have that $f(t)\geqslant \min(0, 2^{n-1}-n)$. Since $2^{n-1}\geqslant n$ for $n\in \mathbb{N}^{+}$, we have that $f(t)\geqslant 0$. Thus $2^{n-t}\cdot t+n-t\geqslant 2n-1$. Hence by (\ref{ch5equlexico}) we have that 
$$
|\Aut(\Gamma)|\geqslant|\Aut(X)|\cdot |\Aut(Y)|^{|X|}>2^{n-t}(2^{t})^{2^{n-t}}=2^{2^{n-t}\cdot t+n-t}\geqslant 2^{2n-1},
$$
that is, $|\Aut(\Gamma)|>|\Hol(Z_{2^{n}})|$. Hence $\Gamma$ is non-normal for $Z_{2^{n}}$ when $n\geqslant 3$.
\end{proof}

\begin{llemma}\label{ch6lem51}
Let $\Gamma$ be a circulant for $G$. If $y\in \Aut(\Gamma)$, then $\Gamma$ is not normal for $G$. In particular, $\Gamma$ is not an NNN-graph for $G$.
\end{llemma}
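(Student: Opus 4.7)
The plan is to leverage Lemma \ref{lexicononnormal}: I will show that $y \in \Aut(\Gamma)$ forces $\Gamma$ to decompose as a nontrivial lexicographic product of two circulants over the unique index-two subgroup $H = \langle a^2 \rangle$ of $G$, at which point non-normality of $\Gamma$ for $G$ follows immediately, and hence $\Gamma$ is not an NNN-graph for $G$.

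The first step is that $y \in \Aut(\Gamma)$ is equivalent to $S^y = S$, since $y$ fixes the identity $\mathbf{1}$ and the neighbours of $\mathbf{1}$ in $\Gamma$ are exactly the elements of $S$; so $S$ is a union of $\langle y \rangle$-orbits on $G$. The next, more technical step is to analyze the $\langle y \rangle$-orbits on the non-identity coset $Ha$, consisting of the $2^{n-1}$ odd-exponent elements of $G$. Using the decomposition $Z_{2^n}^{\ast} = \langle -1 \rangle \times \langle 5 \rangle$ valid for $n \geq 3$, together with the fact from Lemma \ref{ch6lem00} that $5$ has order $2^{n-2}$ modulo $2^n$, the action of $y$ on exponents is multiplication by $5$, and $Ha$ splits into exactly two $\langle y \rangle$-orbits $\{a^{5^i}\}$ and $\{a^{-5^i}\}$, each of size $2^{n-2}$. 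These two orbits are interchanged by inversion $a^k \mapsto a^{-k}$, so combining $\langle y \rangle$-invariance of $S$ with $S = S^{-1}$ forces $S \cap Ha$ to be either empty or equal to all of $Ha$.

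In either case $S \setminus H$ is a union of zero or one cosets of $H$, and $\Gamma$ admits the lexicographic decomposition $\Gamma = X[\Cay(H, S \cap H)]$, where $X = \Cay(G/H, T)$ is a circulant on $G/H \cong Z_2$ with $T = \{Ha\}$ (when $Ha \subseteq S$) or $T = \emptyset$ (when $S \cap Ha = \emptyset$), and the inner factor $\Cay(H, S \cap H)$ is a circulant on $|H| = 2^{n-1} \geq 4$ vertices. Both factors are nontrivial circulants, so Lemma \ref{lexicononnormal} yields that $\Gamma$ is non-normal for $G$, and therefore $\Gamma$ is not an NNN-graph for $G$ by definition. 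I expect the orbit-swap analysis in the second paragraph to be the main obstacle, requiring careful use of the multiplicative structure of $Z_{2^n}^{\ast}$; the remaining steps are routine bookkeeping on cosets and a direct appeal to the lexicographic-product lemma.
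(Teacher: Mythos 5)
Your proof is correct and follows essentially the same route as the paper's: both arguments use the fact that $\langle x,y\rangle=\Aut(G)$ is transitive on the generators of $G$ (your two $\langle y\rangle$-orbits on $\langle a^{2}\rangle a$ interchanged by inversion) to conclude that $S\setminus\langle a^{2}\rangle$ is a union of $\langle a^{2}\rangle$-cosets, and then pass to a lexicographic decomposition and apply Lemma~\ref{lexicononnormal}. The only cosmetic differences are that the paper invokes \cite[Theorem 1.2]{kovacs2012cayley} to obtain the lexicographic product where you construct it directly, and that you also treat the degenerate case $S\cap\langle a^{2}\rangle a=\emptyset$, which the paper implicitly excludes by assuming $S$ contains a generator.
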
 

\begin{proof} First notice that  $x\in \Aut(\Gamma)$. Let $S$ be the connection set of $\Gamma$, and $S_{0}$ be the set of generators of $G$ in $S$. Obviously there exists some $a^{\epsilon}\in S_{0}$ where $\epsilon$ is odd. Since $y\in \Aut(\Gamma)$, we  have that $y\in \Aut(\Gamma)_{\bf1}$, and so $(a^{\epsilon})^{\langle x, y\rangle}\subset S_{0}$. Note that $(a^{\epsilon})^{\langle x, y\rangle}$ is the set of all generators of $G$. Let $H=\langle a^{2}\rangle$. Thus $(a^{\epsilon})^{\langle x, y\rangle}=S_{0}=S\backslash H=aH$, that is, $H<_{S}G$. Thus by \cite[Theorem 1.2]{kovacs2012cayley} we have that $\Gamma$ is a lexicographic product, and so it follows from Lemma~\ref{lexicononnormal} that $\Gamma$ is not an NNN-graph for $G$. \end{proof}

\section{The NNN-Property of Cyclic Groups}

The purpose of this section is to prove Theorem~\ref{main0}. By Theorem~\ref{notdivisibleby8}, it is left to check the NNN-property of cyclic groups whose order is divisible by 8. Let $G=Z_{n}$ where $n=2^{k_{1}}p_{2}^{k_{2}}\cdots p_{t}^{k_{t}}$ with $k_{1}\geqslant 3$ and $p_{i}$ an odd prime. 
Recall that 
\begin{equation}\label{equ1}
G\cong Z_{2^{k_{1}}}\times Z_{p_2^{k_2}}\times\cdots\times Z_{p_{t}^{k_{t}}},
\end{equation}
and 
\begin{equation}\label{equ2}
\mathrm{Aut} (G)=\Aut(Z_{2^{k_{1}}})\times   \Aut(Z_{p_{2}^{k_{2}}}) \times\cdots\times \Aut(Z_{p_{t}^{k_{t}}}).
\end{equation}
Let $a=(a_{1}, \ldots, a_{t})$ be a generator of $G$ where $a_{i}$ generates $Z_{p_{i}^{k_{i}}}$, and for $g=(g_{1}, \ldots, g_{t})\in G$ and $\pi=(\pi_{1}, \ldots, \pi_{t})\in \mathrm{Aut} (G)$ we have that, 
\begin{equation*}
g^{\pi}=(g_{1}^{\pi_{1}}, \ldots, g_{t}^{\pi_{t}}). 
\end{equation*}
Let $\Hol(G)=G\rtimes \Aut(G)$. By (\ref{equ1}) and (\ref{equ2}) we have that 
\begin{equation}\label{equ4}
\Hol(G)=\Hol(Z_{2^{k_{1}}})\times\Hol(Z_{p_{2}^{k_{2}}})\times \cdots \times \Hol(Z_{p_{t}^{k_{t}}}). 
\end{equation}

Let $\Gamma=\mathrm{Cay}(G, S)$ be a normal circulant and $A=\mathrm{Aut}(\Gamma)$. Let $R\leqslant A$ be an cyclic regular subgroup where $R\neq G_R$ and $R\cong G$. By Lemma~\ref{ch5them31} we have that 
\begin{equation}\label{equ5}
N=Z_{2^{\ell}}\times \prod_{i=1}^{t}Z_{p_{i}^{k_{i}}}
\end{equation}
for some $1\leqslant \ell\leqslant k_{1}-1$.

Let $h\in R$. Since $R\leqslant \Hol(G)$, we can write $h=g\pi$ where $g\in G_R$ and $\pi\in \Aut(G)$. Let $B=\prod_{i=1}^{t}Z_{p_{i}^{k_{i}}}\leqslant G_R\cap R$.

\begin{llemma}\label{lem1}
$R\leqslant \Hol(Z_{2^{k_{1}}})\times B$. 
\end{llemma}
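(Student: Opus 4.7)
The plan is to project each element $h\in R$ onto $\Aut(G)$ and show its image has trivial component in every factor $\Aut(Z_{p_i^{k_i}})$ with $i\geq 2$. Since $\Gamma$ is normal for $G$, we have $A=G_R\rtimes\Aut(G,S)\leq\Hol(G)$, so every $h\in R$ admits a unique decomposition $h=g\sigma$ with $g\in G_R$ and $\sigma\in\Aut(G,S)\leq\Aut(G)$. The map $h\mapsto\sigma$ is a homomorphism $R\to\Aut(G)$ with kernel $R\cap G_R=N$, and since $R$ is abelian its image centralises $N$. Using that $G_R$ is abelian and meets $\Aut(G,S)$ trivially, the centraliser splits as $C_A(N)=G_R\rtimes C_{\Aut(G,S)}(N)$, so
\[
R/N\cong RG_R/G_R\leq C_{\Aut(G,S)}(N)\leq C_{\Aut(G)}(N).
\]

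Next, I will feed the form of $N$ given by~(\ref{equ5}) into Corollary~\ref{ch5corocentralizer}. Since $m_i=k_i$ for every $i\geq 2$, each factor $Z_{p_i^{k_i-m_i}}$ in the centraliser description collapses, and the corollary yields
\[
C_{\Aut(G)}(N)\leq \Aut(Z_{2^{k_1}})\times\{1\}\times\cdots\times\{1\}
\]
in both cases $m_1=1$ and $2\leq m_1\leq k_1$. Consequently, for every $h=g\sigma\in R$, the automorphism component $\sigma=(\sigma_1,\ldots,\sigma_t)$ satisfies $\sigma_i=1$ for all $i\geq 2$.

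Finally, decomposing $g=(g_1,\ldots,g_t)\in G_R=\prod_{j=1}^t Z_{p_j^{k_j}}$ and multiplying, $h=g\sigma$ becomes
\[
h=(g_1\sigma_1,g_2,\ldots,g_t)\in\Hol(Z_{2^{k_1}})\times\prod_{i=2}^t Z_{p_i^{k_i}}=\Hol(Z_{2^{k_1}})\times B,
\]
which is the desired inclusion. The main content is the centraliser computation already in hand; the only mildly subtle step is verifying the centraliser factorisation $C_A(N)=G_R\rtimes C_{\Aut(G,S)}(N)$, which uses only that $G_R$ is abelian and intersects $\Aut(G,S)$ trivially.
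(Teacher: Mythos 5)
Your proof is correct and follows essentially the same route as the paper: both arguments exploit that $R$ is abelian and that $R\cap G_R$ contains the full odd part $B$, forcing the $\Aut(Z_{p_i^{k_i}})$-components of every element of $R$ to be trivial for $i\geqslant 2$. The only cosmetic difference is that you pass through $C_{\Aut(G)}(N)$ via Corollary~\ref{ch5corocentralizer}, whereas the paper observes directly that $C_{\Aut(Z_{p_i^{k_i}})}(Z_{p_i^{k_i}})=\{\mathbf{1}\}$; the underlying computation is the same.
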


\begin{proof} Notice that for all $2\leqslant i\neq j\leqslant t$, we have that $C_{\Aut(Z_{p_{i}^{k_{i}}})}(Z_{p_{i}^{k_{i}}})=\{\bf 1\}$ and $C_{\Aut(Z_{p_{i}^{k_{i}}})}(Z_{p_{j}^{k_{j}}})=\Aut(Z_{p_{i}^{k_{i}}})$. Let $h\in R$. Since $R$ is abelian and $B\leqslant R$, we have that $h=g\pi$ where $\pi\in \Aut(Z_{2^{k_{1}}})$, which implies that $R\leqslant G_R\rtimes \Aut(Z_{2^{k_{1}}})$. This completes the proof as $G_R\rtimes \Aut(Z_{2^{k_{1}}})=\Hol(Z_{2^{k_{1}}})\times B$.  \end{proof}

%
%

Recall that $Z_{2^{k_{1}}}=\langle a_{1}\rangle$ and $\Aut(Z_{2^{k_{1}}})=\langle x\rangle\times\langle y\rangle$ where $x: a_{1}\mapsto a_{1}^{-1}$ and $y: a_{1}\mapsto a_{1}^{5}$. Note that $B$ is generated by $(a_{2}, \ldots, a_{t})$. By the Second Isomorphism Theorem and Lemma~\ref{lem1}, we have that 
\begin{equation}\label{equ6}
R/(R\cap G_R)\cong RG_R/G_R\leqslant \langle x\rangle\times \langle y\rangle.
\end{equation}
Recall that $N=R\cap G_R=Z_{2^{\ell}}\times B$ for some $1\leqslant \ell\leqslant k_{1}-1$. Thus  $R=\langle N, h\rangle$  for some $h\in R\backslash G_R$. By Lemma~\ref{lem1} we have that $h=rb$ with $r\in \Hol(Z_{2^{k_{1}}})$ and $b\in B$. Since $B\leqslant R$, we have that $R=K\times B$ where $K=\langle Z_{2^{\ell}}, r\rangle\leqslant \Hol(Z_{2^{k_1}})$.

Now $G=Z_{2^{k_1}}\times B$ and $\Hol(Z_{2^{k_1}})\leqslant \Hol(G)$ fixes $Z_{2^{k_1}}$ setwise. Thus as $R$ acts regularly on $G$, it follows that $K$ acts regularly on $Z_{2^{k_1}}$.
Since $K$ is cyclic, it follows from Theorem~\ref{ch6maintheo} that up to conjugacy $R=\langle ay^{2^{\ell}}\rangle\times B$ with $0\leqslant \ell\leqslant k_{1}-3$. Note that if $k_{1}=3$, then it follows from Theorem~\ref{ch6them51} that $R\unlhd \Hol(G)$. Thus from here we may assume that $k_{1}\geqslant 4$.

\begin{llemma}\label{lem3}
If $\Gamma$ is non-normal for $R$, then $y^{2^{k_{1}-4}}\in \Aut(\Gamma)$.
\end{llemma}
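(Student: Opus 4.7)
The plan is to exploit the direct-product decomposition
$\Hol(G) = \Hol(Z_{2^{k_1}}) \times \prod_{i=2}^t \Hol(Z_{p_i^{k_i}})$ from (\ref{equ4}) in order to reduce the claim to the pure $2$-power case handled by Theorem~\ref{ch6them51}. The preceding setup already gives $R = K \times B$ with $K = \langle a\, y^{2^\ell}\rangle \leqslant \Hol(Z_{2^{k_1}})$ for some $0 \leqslant \ell \leqslant k_1 - 3$, where the symbol $a$ appearing inside $K$ denotes the generator of the $Z_{2^{k_1}}$ factor. The target is to show $\ell \leqslant k_1 - 4$, since then $y^{2^{k_1-4}} = (y^{2^\ell})^{2^{k_1-4-\ell}} \in \langle y^{2^\ell}\rangle$, and we will exhibit $y^{2^\ell}$ as an automorphism of $\Gamma$.

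First I would extract $y^{2^\ell}$ as an element of $\Aut(\Gamma)$. Since $G_R \leqslant \Aut(\Gamma)$ for every Cayley graph, the generator $a$ of the $Z_{2^{k_1}}$-factor (viewed inside $G_R$) lies in $\Aut(\Gamma)$. Together with $R \leqslant \Aut(\Gamma)$, which gives $a\,y^{2^\ell} \in \Aut(\Gamma)$, this yields $y^{2^\ell} = a^{-1}(a\,y^{2^\ell}) \in \Aut(\Gamma)$. This conclusion is already the statement of the lemma whenever $\ell \leqslant k_1 - 4$, so only the case $\ell = k_1 - 3$ requires further work.

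Suppose for contradiction that $\ell = k_1 - 3$. I would then show that $R$ is forced to be normal in $\Hol(G)$, contradicting the hypothesis that $R$ is non-normal in $\Aut(\Gamma) \leqslant \Hol(G)$; the latter containment holds because $\Gamma$ is normal for $G$, so $\Aut(\Gamma) = G_R \rtimes \Aut(G, S) \leqslant \Hol(G)$. By Theorem~\ref{ch6them51} applied inside the first factor, $K = \langle a\,y^{2^{k_1-3}}\rangle$ is normal in $\Hol(Z_{2^{k_1}})$. The component $B$ equals the direct product of the regular cyclic base groups of the odd-prime-power holomorphs, each of which is normal in its holomorph (the base group of any semidirect product is normal). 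Because normal subgroups of the factors multiply to a normal subgroup of the direct product, $R = K \times B \lhd \Hol(G)$, giving the required contradiction; hence $\ell \leqslant k_1 - 4$, and the lemma follows.

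The proof is short because most of the work is already done: Theorem~\ref{ch6them51} isolates exactly the one non-trivial cyclic regular subgroup of $\Hol(Z_{2^{k_1}})$ that happens to be normal, and the product decomposition of $\Hol(G)$ lifts normality in the $2$-part factor to normality in the whole group. The only point requiring care is to keep straight the two uses of the symbol $a$ in the expression $R = \langle a\,y^{2^\ell}\rangle \times B$ (the generator of $Z_{2^{k_1}}$ inside the $K$-factor versus the global generator of $G$ elsewhere); once that is tracked, there is no real obstacle.
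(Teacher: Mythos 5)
Your proposal is correct and follows essentially the same route as the paper: the paper's one-line proof invokes Corollary~\ref{ch6coro51} together with the decomposition (\ref{equ4}), and the content of that corollary is exactly your argument — extract $y^{2^{\ell}}$ from $R$ and $G_R$, and rule out $\ell=k_1-3$ because Theorem~\ref{ch6them51} would then make $K$ (hence $R=K\times B$) normal in $\Hol(G)\geqslant\Aut(\Gamma)$. You merely spell out the direct-product normality step that the paper leaves implicit in its citation of (\ref{equ4}).
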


\begin{proof} By Corollary~\ref{ch6coro51} and (\ref{equ4}), it implies that if $R$ is non-normal in $A$, then $y^{2^{k_{1}-4}}\in \Aut(\Gamma)$.  \end{proof}

\begin{guess}\label{them1}
If $y^{2^{k_{1}-4}}\in \Aut(\Gamma)$, then $\Gamma$ is non-normal for $G$.
\end{guess}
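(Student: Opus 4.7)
The plan is to assume $\Gamma$ is normal for $G$, in which case $\Aut(\Gamma)_{\bf 1}=\Aut(G,S)\leqslant\Aut(G)$, and then to produce an element of $\Aut(\Gamma)_{\bf 1}$ that is not a group automorphism of $G$; this contradicts normality. The element will be an involutory coset swap built from the unique involution $a_1^{2^{k_1-1}}$ in the first cyclic factor of $G=Z_{2^{k_1}}\times Z_{p_2^{k_2}}\times\cdots\times Z_{p_t^{k_t}}$.

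Set $z=(a_1^{2^{k_1-1}},1,\ldots,1)\in G$, an element of order $2$, and define $\theta\colon G\to G$ by
$$
\theta(g)=\begin{cases} gz, & \text{if $g_1\in a_1^{2}\langle a_1^4\rangle$,}\\ g, & \text{otherwise.}\end{cases}
$$
Since $k_1\geqslant 4$ forces $2^{k_1-1}\equiv 0\pmod 4$, the defining condition is invariant under multiplication by $z$, so $\theta$ swaps $g\leftrightarrow gz$ on the ``$g_1$-exponent $\equiv 2\pmod 4$'' set and is the identity elsewhere; in particular $\theta$ is an involution fixing ${\bf 1}$. Moreover $\theta(a_1)=a_1$ while $\theta(a_1^2)=a_1^2z=a_1^{2+2^{k_1-1}}\neq a_1^2$, so $\theta$ is not a group endomorphism of $G$.

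The main step is the verification that $\theta\in\Aut(\Gamma)$, i.e.\ $\theta(sg)\theta(g)^{-1}\in S$ for every $s\in S$ and $g\in G$. The partition of $G$ by ``$g_1$-exponent $\equiv 2\pmod 4$ or not'' is a coset partition modulo $2$, so the difference $\theta(sg)\theta(g)^{-1}$ is either $s$ (both endpoints on the same side) or $sz$ (opposite sides). The opposite-sides case only arises when $s_1\not\equiv 0\pmod 4$, so the task reduces to showing $sz\in S$ for every $s\in S$ with $s_1\not\equiv 0\pmod 4$. This is where the hypothesis $y^{2^{k_1-4}}\in\Aut(\Gamma)$ is used: writing $5^{2^{k_1-4}}=1+2^{k_1-2}u$ with $u$ odd (Lemma~\ref{ch6lem00}), a direct computation shows $s^{y^{2^{k_1-4}}}=sz$ when $s_1\equiv 2\pmod 4$, and $s^{y^{2^{k_1-3}}}=sz$ when $s_1$ is odd (using $y^{2^{k_1-3}}=(y^{2^{k_1-4}})^2\in\Aut(\Gamma)$ and the identity $s_1\cdot 2^{k_1-1}\equiv 2^{k_1-1}\pmod{2^{k_1}}$ for odd $s_1$).

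The main obstacle is keeping track of the two subcases ($s_1$ odd versus $s_1\equiv 2\pmod 4$) in the $2$-adic verification that $sz\in S$; once these are settled, the existence of $\theta\in\Aut(\Gamma)_{\bf 1}\setminus\Aut(G)$ immediately contradicts normality of $\Gamma$ for $G$, completing the proof.
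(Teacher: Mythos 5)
Your proof is correct, and its central construction coincides with the paper's: the involution $\theta$ that multiplies by $z=a_1^{2^{k_1-1}}$ exactly on the coset $(\langle a_1^{4}\rangle\times B)a_1^{2}$ is precisely the map used in the paper, and the hypothesis $y^{2^{k_1-4}}\in\Aut(\Gamma)$ enters in the same way, via Lemma~\ref{ch6lem00}, to show that every $s\in S$ whose first coordinate has exponent $\not\equiv 0\pmod 4$ satisfies $sz\in S$. The difference is organisational but genuine: the paper splits into two cases according to whether $S$ meets some $M_i\times B$ with $i\geqslant 2$, and when it does not it abandons $\theta$ in favour of showing that $\{{\bf 1},z\}$ is a W-subgroup relative to $S$, which requires the Kov\'acs--Servatius lexicographic-product theorem together with Lemma~\ref{lexicononnormal}. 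Your observation that the mixed-endpoint case of the edge check can only produce a difference $sz$ with $s_1\not\equiv 0\pmod 4$ --- so that closure of $S_0\cup S_1$ under multiplication by $z$ suffices regardless of what the rest of $S$ looks like --- lets the single $\theta$ argument cover both cases, eliminating the case distinction and the dependence on the lexicographic-product machinery. You also dispense with the auxiliary automorphisms $\rho=y^{2^{k_1-3}}$ and $\sigma=x\rho f$ and the explicit quadruple decompositions of $S_0$ and $S_1$, since closure under inversion is automatic from $S=S^{-1}$ and is never needed. Two small points of wording: the two-block partition you use is not literally a coset partition (one block is a coset of the index-four subgroup $\langle a_1^4\rangle\times B$ and the other is the union of the remaining three cosets), though the only property you need --- that each block is $z$-invariant and the displaced difference is $sz$ --- holds; and it is worth stating explicitly that $k_1\geqslant 4$ is in force here (as the paper arranges before this theorem), since both the meaning of $y^{2^{k_1-4}}$ and the $z$-invariance of the blocks depend on it.
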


\begin{proof} Recall that $G_R=Z_{2^{k_{1}}}\times B$ and $\Aut(G)=\Aut(Z_{2^{k_{1}}})\times \Aut(B)$. In  $Z_{2^{k_{1}}}$, for each $0\leqslant i\leqslant k_{1}-1$, let $H_{i}=\langle a_{1}^{2^{i}}\rangle$ and $M_{i}$ be the set of generators of $H_{i}$. Let 
\begin{equation*}
S_{i}=(M_{i}\times B)\cap S.
\end{equation*} 
Thus $S=\bigcup_{i=0}^{k_{1}-1}S_{i}$ is a disjoint union.

Suppose to the contrary that $\Gamma$ is normal for $G$, that is, $A=\Aut(\Gamma)=G_R\rtimes \Aut(G, S)$. Let $s\in S$, and write $s=(t_{1}, t_{2})$ with $t_{1}\in Z_{2^{k_{1}}}$ and $t_{2}\in B$. Let $f\in \Aut(B)$ where $f: (a_{2}, \ldots, a_{t})\mapsto (a_{2}^{-1}, \ldots, a_{t}^{-1})$. Since $S^{-1}=S$, we have that $xf\in \Aut(G, S)$. Since $y^{2^{k_{1}-4}}\in A$, we have that $y^{2^{k_{1}-4}}\in \Aut(G,S)$, and so $y^{2^{k_{1}-3}}\in \Aut(G, S)$. Let $\rho=y^{2^{k_{1}-3}}$. We have that $\rho$ acts trivially on $B$ and 
$$
\rho: a_{1}\mapsto a_{1}^{2^{k_{1}-1}+1}.
$$ 
Let $\sigma=x\rho f$, and so 
$$
\sigma: (a_{1}, a_{2},\ldots, a_{t})\mapsto (a_{1}^{2^{k_{1}}-1}, a_{2}^{-1}, \ldots, a_{t}^{-1}).
$$
Obviously $\rho, \sigma\in \Aut(G, S)$.

Since $\Gamma$ is connected, we have that $S_{0}\neq \emptyset$. Let $s=(t_{1}, t_{2})\in S_{0}$, and so $t_{1}=a_{1}^{2m+1}$ for some odd $m$. Since 
\begin{equation}\label{equrho}
t_{1}^{\rho}=a_{1}^{(2m+1)(2^{k_{1}-1}+1)}=a_{1}^{2m+1}a_{1}^{2^{k_{1}-1}}=t_{1} a_{1}^{2^{k_{1}-1}}
\end{equation}
and 
\begin{equation}\label{equsigma}
t_{1}^{x\rho}=(t_{1}^{-1})^{\rho}=(t_{1}^{\rho})^{-1}=(t_{1} a_{1}^{2^{k_{1}-1}})^{-1}=t_{1}^{-1}a_{1}^{2^{k_{1}-1}},
\end{equation}
we have that 
\begin{equation*}
s^{\rho}=(t_{1}^{\rho}, t_{2})=(t_{1}, t_{2})(a_{1}^{2^{k_{1}-1}}, {\bf1})=s a_{1}^{2^{k_{1}-1}},
\end{equation*}
and 
\begin{equation}\label{equ7}
s^{\sigma}=(t_{1}^{x\rho}, t_{2}^{f})=(t_{1}^{-1}, t_{2}^{-1})(a_{1}^{2^{k_{1}-1}}, {\bf1})=s^{-1} a_{1}^{2^{k_{1}-1}}.
\end{equation}
Thus
\begin{equation}\label{equ8}
S_{0}=\bigcup_{s\in I}\{s, s^{-1}, s a_{1}^{2^{k_{1}-1}}, s^{-1} a_{1}^{2^{k_{1}-1}}\}
\end{equation}
where $I\subseteq M_{0}\times B$. 

Let $s=(a_{1}^{2^{i}m}, t_{2})$ with $1\leqslant i\leqslant k_{1}-1$ and $m$ odd. Since
\begin{equation}\label{ch5equrho}
t_{1}^{\rho}=a_{1}^{2^{i}m(2^{k_{1}-1}+1)}=a_{1}^{2^{i}m}=t_{1},
\end{equation}
it follows from (\ref{ch5equrho}) that 
\begin{equation}\label{equ9}
s^{\rho}=(a_{1}^{2^{i}m}, t_{2})=s,
\end{equation}
and 
\begin{equation}\label{equ10}
s^{\sigma}=s^{x\rho\cdot f}=s^{xf}=s^{-1}.
\end{equation}

Since $y^{2^{k_{1}-4}}, xf\in \Aut(G, S)$, we have that $s^{\langle y^{2^{k_{1}-4}}, xf\rangle}\subset S$ for all $s\in S_{1}$. Clearly $\rho, \sigma\in \langle y^{2^{k_{1}-4}}, xf\rangle$. Let $s\in S_{1}$, and write $s=(t_{1}, t_{2})$ where $t_{2}\in B$ and $t_{1}=a_{1}^{2m}$ with $m$ odd. By Lemma~\ref{ch6lem00} we have the following:
\begin{equation}\label{ch6equ61}
t_{1}^{y^{2^{k_{1}-4}}}=a_{1}^{2m(5^{2^{k_{1}-4}})}=a_{1}^{2m(2^{k_{1}-2}\ell+1)}=a_{1}^{2^{k_{1}-1}+2m}=t_{1} a_{1}^{2^{k_{1}-1}} \textrm{for some $\ell\neq 0$},
\end{equation}
\begin{equation}\label{ch6equ62}
t_{1}^{(y^{2^{k_{1}-4}})^{3}}=t_{1}^{y^{3\cdot2^{k_{1}-4}}}=a_{1}^{2m(5^{3\cdot2^{k_{1}-4}})}=a_{1}^{2m(2^{k_{1}-2}\ell'+1)}=a_{1}^{2^{k_{1}-1}+2m}=t_{1} a_{1}^{2^{k_{1}-1}} \textrm{for some $\ell'\neq 0$}.
\end{equation}
Thus we have that 
\begin{equation}\label{neweq}
s^{y^{2^{k_{1}-4}}}=(t_{1}^{y^{2^{k_{1}-4}}}, t_{2})=(t_{1} a_{1}^{2^{k_{1}-1}}, t_{2})=s a_{1}^{2^{k_{1}-1}},
\end{equation}
and 
\begin{equation*}
s^{(y^{2^{k_{1}-4}})^{3}}=(t_{1}^{(y^{2^{k_{1}-4}})^{3}}, t_{2})=(t_{1}a_{1}^{2^{k_{1}-1}}, t_{2})=s a_{1}^{2^{k_{1}-1}}. 
\end{equation*}
Further we have that 
\begin{equation*}
s^{xy^{2^{k_{1}-4}}f}=s^{-1} a_{1}^{2^{k_{1}-1}}  \quad \textrm{ and } \quad s^{x(y^{2^{k_{1}-4}})^{3}f}=s^{-1} a_{1}^{2^{k_{1}-1}},
\end{equation*}
which implies that 
\begin{equation*}
s^{\langle y^{2^{k_{1}-4}}, xf\rangle}=\{s, s^{-1}, s a_{1}^{2^{k_{1}-1}}, s^{-1} a_{1}^{2^{k_{1}-1}}\}. 
\end{equation*}
If $y^{2^{t}}\in \Aut(G, S)$ with $t\leqslant k_{1}-4$, then 
\begin{equation*}
s^{y^{2^{t}}}=(a_{1}^{2m\cdot 5^{2^{t}}}, t_{2})\in S_{1}.
\end{equation*}
Thus if $y^{2^{k_{1}-4}}\in A$, then 
\begin{equation}\label{equ11}
S_{1}=\bigcup_{s\in S_{1}}\{s, s^{-1}, s a_{1}^{2^{k_{1}-1}}, s^{-1} a_{1}^{2^{k_{1}-1}}\}.
\end{equation}

First suppose that $S=S_{0}\cup S_{1}$. Let $X=\{{\bf1}, a_{1}^{2^{k_{1}-1}}\}$. It follows from (\ref{equ8}) and (\ref{equ11}) that $S$ is a union of $X$-cosets, that is $X <_{S}G$, and so by \cite[Theorem 1.2]{kovacs2012cayley}  that $\Gamma$ is a lexicographic product of nontrivial circulants, which by Lemma~\ref{lexicononnormal} it implies that $\Gamma$ is non-normal for $G$.
 
Now suppose that $S=S_{0}\cup S_{1}\bigcup(\cup_{i\in I}S_{i})$ with $I\subseteq \{2, \ldots, k_{1}-1\}$ and $I\neq \emptyset$. Note that $a_{1}^{2^{k_{1}-1}}$ is the unique involution in $G$, and so $a_{1}^{2^{k_{1}-1}}\in \langle a_{1}^{4}\rangle\times B$. Hence, if $g\in \langle a_{1}^{4}\rangle\times B$, then $g a_{1}^{2^{k_{1}-1}}\in \langle a_{1}^{4}\rangle\times B$. We define a map $\theta: G\mapsto G$ as below: for all $g\in G$,
\begin{equation*}
g^{\theta}= \begin{cases}
               g               & g\notin (\langle a_{1}^{4}\rangle\times B) a_{1}^{2}, \\
               g a_{1}^{2^{k_{1}-1}}               & g\in (\langle a_{1}^{4}\rangle\times B) a_{1}^{2}
               \end{cases}
\end{equation*}
Clearly $\theta$ is a bijection. Also $\theta$ acts trivially on $\langle a_{1}^{4}\rangle\times B$, $(\langle a_{1}^{4}\rangle\times B)a_{1}$ and $(\langle a_{1}^{4}\rangle\times B)a_{1}^{3}$ respectively, and by (\ref{neweq}) acts on $(\langle a_{1}^{4}\rangle\times B) a_{1}^{2}$ as $y^{2^{k_{1}-4}}$, as every element in $(\langle a_{1}^{4}\rangle\times B) a_{1}^{2}$ is of the form $(t_1,t_2)$ with $t_1=a_1^{2m}$ with $m$ odd.

Let $\{u, v\}\in E(\Gamma)$, and $s=uv^{-1}$. If $u, v$ are fixed by $\theta$, then $u^{\theta}(v^{\theta})^{-1}=uv^{-1}=s\in S$. Similarly if both $u$ and $v$ are not fixed by $\theta$, then $u^{\theta}(v^{\theta})^{-1}=uv^{-1}=s\in S$. Thus we may assume that only one of $u$ and $v$, say $u$, is fixed by $\theta$. This implies that $s\in S_{0}\bigcup S_{1}$. Then 
\begin{equation*}
u^{\theta}(v^{\theta})^{-1}=uv^{-1}a_{1}^{2^{k_{1}-1}} =s a_{1}^{2^{k_{1}-1}},
\end{equation*}
and so by (\ref{equ8}) and (\ref{equ11}) we have that $\{u^{\theta}, v^{\theta}\}\in E(\Gamma)$. Hence $\theta\in A$. 

Since ${\bf1}\notin (\langle a_{1}^{4}\rangle\times B) a_{1}^{2}$, we have that $\theta$ fixes ${\bf1}$. Clearly $\theta$ fixes the generators of $G$, and so $\theta\notin \Aut(G, S)$ as $\theta\neq {\bf1}$. Hence $\Gamma$ is non-normal for $G$.  \end{proof}

{\it Proof of Theorem~\ref{main0}.} Let $G=Z_{n}$, and $\Gamma$ be a normal circulant for $G$. By Lemma~\ref{lem3} and Theorem~\ref{them1} we have that $G$ does not have the NNN-property if $8$ divides $n$. Thus by Theorem~\ref{notdivisibleby8}, we complete the proof. \qed

\subsection*{Acknowledgements}
Many of the results of this paper appeared in the last author's PhD thesis, which was supported by an Australian Government Research Training Program (RTP) Scholarship and the University of Western Australia Safety-Net-Top-Up scholarship. This research also forms part of the ARC Discovery Project DP150101066. The second author was supported by the Australian Research Council grant DE160100081.

\bibliographystyle{acm}
\bibliography{ref}

\begin{thebibliography}{10}

\bibitem{Alspach1979}
{\sc Alspach, B., and Parsons, T.~D.}
\newblock Isomorphism of circulant graphs and digraphs.
\newblock {\em Discrete Math. 25}, 2 (1979), 97--108.

\bibitem{bamberg2011point}
{\sc Bamberg, J., and Giudici, M.}
\newblock Point regular groups of automorphisms of generalised quadrangles.
\newblock {\em J. Combin. Theory Ser. A 118}, 3 (2011), 1114--1128.

\bibitem{MR1271140}
{\sc Biggs, N.}
\newblock {\em Algebraic {g}raph {t}heory}, {S}econd~ed.
\newblock Cambridge Mathematical Library. Cambridge University Press,
  Cambridge, 1993.

\bibitem{MR3807043}
{\sc Caranti, A., and Dalla~Volta, F.}
\newblock Groups that have the same holomorph as a finite perfect group.
\newblock {\em J. Algebra 507\/} (2018), 81--102.

\bibitem{MR1704676}
{\sc Carnahan, S., and Childs, L.}
\newblock Counting {H}opf {G}alois structures on non-abelian {G}alois field
  extensions.
\newblock {\em J. Algebra 218}, 1 (1999), 81--92.

\bibitem{MR2286236}
{\sc Dummit, D.~S., and Foote, R.~M.}
\newblock {\em Abstract algebra}, third~ed.
\newblock John Wiley \& Sons, Inc., Hoboken, NJ, 2004.

\bibitem{Fernandes2005}
{\sc Fernandes, J.~G., and Giudici, R.~E.}
\newblock Isomorphism between {C}ayley (di)graphs.
\newblock {\em Discrete Math. 305}, 1-3 (2005), 361--364.

\bibitem{MR2488141}
{\sc Giudici, M., and Kelly, S.}
\newblock Characterizing a family of elusive permutation groups.
\newblock {\em J. Group Theory 12}, 1 (2009), 95--105.

\bibitem{giudici2010note}
{\sc Giudici, M., and Smith, M.~R.}
\newblock A note on quotients of strongly regular graphs.
\newblock {\em Ars Math. Contemp. 3}, 2 (2010), 147--150.

\bibitem{MR3647970}
{\sc Guarnieri, L., and Vendramin, L.}
\newblock Skew braces and the {Y}ang-{B}axter equation.
\newblock {\em Math. Comp. 86}, 307 (2017), 2519--2534.

\bibitem{joseph1995isomorphism}
{\sc Joseph, A.}
\newblock The isomorphism problem for {C}ayley digraphs on groups of
  prime-squared order.
\newblock {\em Discrete Math. 141}, 1-3 (1995), 173--183.

\bibitem{MR1644203}
{\sc Kohl, T.}
\newblock Classification of the {H}opf {G}alois structures on prime power
  radical extensions.
\newblock {\em J. Algebra 207}, 2 (1998), 525--546.

\bibitem{kovacs2012cayley}
{\sc Kov\'{a}cs, I., and Servatius, M.}
\newblock On {C}ayley digraphs on nonisomorphic 2-groups.
\newblock {\em J. Graph Theory 70}, 4 (2012), 435--448.

\bibitem{MR2014408}
{\sc Kurzweil, H., and Stellmacher, B.}
\newblock {\em The theory of finite groups}.
\newblock Universitext. Springer-Verlag, New York, 2004.
\newblock An introduction, Translated from the 1998 German original.

\bibitem{li2019explicit}
{\sc Li, C.~H., Xia, B., and Zhou, S.}
\newblock An explicit characterization of arc-transitive circulants.
\newblock {\em arXiv preprint arXiv:1906.04620\/} (2019).

\bibitem{maruvsivc2005normal}
{\sc Maru\v{s}i\v{c}, D., and Morris, J.}
\newblock Normal circulant graphs with noncyclic regular subgroups.
\newblock {\em J. Graph Theory 50}, 1 (2005), 13--24.

\bibitem{morris1999isomorphic}
{\sc Morris, J.}
\newblock Isomorphic {C}ayley graphs on nonisomorphic groups.
\newblock {\em J. Graph Theory 31}, 4 (1999), 345--362.

\bibitem{Muzychuk1997}
{\sc Muzychuk, M.}
\newblock On \'{A}d\'{a}m's conjecture for circulant graphs.
\newblock {\em Discrete Math. 167/168\/} (1997), 497--510.
\newblock 15th British Combinatorial Conference (Stirling, 1995).

\bibitem{royle2008normal}
{\sc Royle, G.~F.}
\newblock A normal non-{C}ayley-invariant graph for the elementary abelian
  group of order 64.
\newblock {\em J. Aust. Math. Soc. 85}, 3 (2008), 347--351.

\bibitem{sabidussi1958class}
{\sc Sabidussi, G.}
\newblock On a class of fixed-point-free graphs.
\newblock {\em Proc. Amer. Math. Soc. 9\/} (1958), 800--804.

\bibitem{tsang2019multiple}
{\sc Tsang, C.}
\newblock On the multiple holomorph of a finite almost simple group.
\newblock {\em arXiv preprint arXiv:1904.09754\/} (2019).

\bibitem{Xu1998}
{\sc Xu, M.-Y.}
\newblock Automorphism groups and isomorphisms of {C}ayley digraphs.
\newblock {\em Discrete Math. 182}, 1-3 (1998), 309--319.
\newblock Graph theory (Lake Bled, 1995).

\bibitem{xu2004}
{\sc Xu, M.-Y., Baik, Y.-G., and Sim, H.-S.}
\newblock Arc-transitive circulant digraphs of odd prime-power order.
\newblock {\em Discrete Math. 287}, 1-3 (2004), 113--119.

\bibitem{MR3698085}
{\sc Xu, Y.}
\newblock On constructing normal and non-normal {C}ayley graphs.
\newblock {\em Discrete Math. 340}, 12 (2017), 2972--2977.

\bibitem{yxuthesis2019}
{\sc Xu, Y.}
\newblock {\em Normal and non-normal cayley graphs}.
\newblock PhD thesis, The University of Western Australia, 2018.

\end{thebibliography}

\end{document}